\newtheorem{theorem}{Theorem}[section]
\newtheorem{proposition}[theorem]{Proposition}
\newtheorem{lemma}[theorem]{Lemma}
\newtheorem{corollary}[theorem]{Corollary}
\newtheorem{question}[theorem]{Question}
\newtheorem{problem}[theorem]{Problem}
\theoremstyle{definition}
\newtheorem{definition}[theorem]{Definition}
\newtheorem{example}[theorem]{Example}
\theoremstyle{remark}
\newtheorem{remark}[theorem]{Remark}
\numberwithin{equation}{section}
\newcommand{\norm}[1]{\left\lVert#1\right\rVert}
\newcommand{\ip}[2]{\left\langle#1, #2\right\rangle}
\newcommand{\R}{\mathbb{R}}
\newcommand{\C}{\mathbb{C}}
\newcommand{\D}{\mathbb{D}}
\newcommand{\ve}{\varepsilon}
\definecolor{darkgreen}{RGB}{0,153,0}
\definecolor{darkred}{RGB}{204,0,0}
\definecolor{darkblue}{RGB}{0,51,204}
\definecolor{lightblue}{RGB}{0,94,255}
\definecolor{red}{RGB}{242,43,29}
\definecolor{gold}{RGB}{240,203,0}
\numberwithin{equation}{section}
\begin{document}

\title{Regularly slice implies once-stably decomposably slice}

\emsauthor{1}{
	\givenname{Joseph}
	\surname{Breen}
	\mrid{4741449}
	\orcid{0009-0001-7794-7206}}{J.~Breen}
\Emsaffil{1}{
	\department{Department of Mathematics}
	\organisation{University of Iowa}
	\rorid{036jqmy94}
	\address{2 West Washington Street}
	\zip{52242}
	\city{Iowa City}
	\country{USA}
	\affemail{joseph-breen-1@uiowa.edu}}

\begin{abstract}
We investigate the relationship between regular and decomposable Lagrangian cobordisms in $4$-dimensional symplectizations. First, we show that regular sliceness implies once-stably decomposable sliceness, and offer a stabilization-free strategy. On the other hand, we show that satelliting preserves regularity of concordance, suggesting that regularity and decomposability are distinct in general. Among other results, we compare the symplectic and smooth slice-ribbon conjectures and construct decomposably slice knots that may not be strongly decomposably slice.
\end{abstract}

\maketitle\thispagestyle{empty}
\tableofcontents

\section{Introduction}

There is a known hierarchy of Lagrangian cobordisms between Legendrian links in the symplectization of a contact $3$-manifold:
\begin{center}
\begin{tikzcd}
  \{\text{decomposable}\} \arrow[r,Rightarrow] 
    & \{\text{regular}\} \arrow[r,Rightarrow] &  \{\text{exact}\}. 
\end{tikzcd}    
\end{center}
These will be reviewed briefly in \cref{subsec:context} and thoroughly in \cref{sec:background}. 

A wealth of literature on exact Lagrangian cobordisms now exists, with particular attention given to decomposability. Less has been said about regularity, and in general the reversibility of the above implications is not fully understood. The broad goal of this paper is to establish a variety of results on regularity and to investigate its relationship with decomposability. 

\begin{remark}
The symplectic adjective \textit{decomposable} is the analogue of the smooth adjective \textit{ribbon}. For more context, we contrast the symplectic and smooth slice-ribbon conjectures in \cref{sec:gradations}; readers preferential to smooth topology may wish to start there.    
\end{remark}

\subsection{Exact, regular, and decomposable cobordisms}\label{subsec:context}

Let $\Lambda_-, \Lambda_+\subset (Y, \ker \alpha)$ be Legendrian links in a contact $3$-manifold. An \textit{exact Lagrangian cobordism} from $\Lambda_-$ to $\Lambda_+$ in the symplectization $(\R_s \times Y, e^s\, \alpha)$ is a surface $L\subset \R \times Y$, cylindrical over $\Lambda_{\pm}$ near $\{s=\pm \infty\}$, for which $(e^s\, \alpha)\mid_L$ is exact. The importance of such cobordisms was first made clear at the turn of the millennium in the context of Eliashberg, Givental, and Hofer's symplectic field theory \cite{EGHSFT}; indeed, there are strong Legendrian invariants in both SFT \cite{ekholm2012exactcobordisms} and Floer theory \cite{golla2019functoriality,baldwin2022lagrangian} which are functorial with respect to exact Lagrangian cobordisms. Existence and classification up to Hamiltonian isotopy, particularly of \emph{fillings} (cobordisms with $\Lambda_- = \emptyset$) and \textit{concordances} (cobordisms diffeomorphic to a cylinder with $\Lambda_{\pm}\neq \emptyset$), is an active area of research \cite{chantraine2010concordance,ekholm2012exactcobordisms,cornwell2016concordance,pan2016fillings,casals2022infinitely}. Classification is difficult. At the time of writing, the standard unknot in $(\R^3, \xi_{\mathrm{st}})$ with maximal Thurston-Bennequin invariant is the only knot for which a complete classification of fillings is known, dating back to work of Eliashberg and Polterovich \cite{eliashberg1996unknotclassification}; however, there is a conjectural classification for a large class of positive braids \cite{casals2022planecurves}. See \cite{blackwell2021lagrangians} for an additional survey. 

The class of \textit{regular} Lagrangians was introduced by Eliashberg, Ganatra, and Lazarev \cite{eliashberg2018flexiblelagrangians}. In our context, these are (necessarily exact) Lagrangian cobordisms $L$ for which there exists a Weinstein structure homotopic to the symplectization with Liouville vector field everywhere tangent to $L$. Regularity is a natural requirement in Weinstein topology, where regular disks correspond to co-cores of critical Weinstein handles, and more generally, the complement of a regular cobordism is Weinstein. Regularity plays a fundamental role in recent advances in high-dimensional contact topology \cite{HH18,honda2019convex}; for instance, in practice, open book stabilizations are performed along regular Lagrangian disks in the page \cite{BHH23}. The existence of non-regular exact Lagrangian \textit{caps} (cobordisms with $\Lambda_+ = \emptyset$) has been known  in dimension $4$ since the work of Lin \cite{lin2016caps}, but for a long time it was unknown if every exact cobordism with $\Lambda_+ \neq \emptyset$ was necessarily regular \cite[Problem 2.5]{eliashberg2018flexiblelagrangians}. Very recently, Dimitroglou Rizell and Golovko \cite{rizell2024instabilitylegendrianknottednessnonregular} exhibited non-regular concordances from sufficiently stabilized knots, showing that this is not the case. However, the question remains open for fillings.

\begin{figure}[t]
	\begin{overpic}[scale=.33]{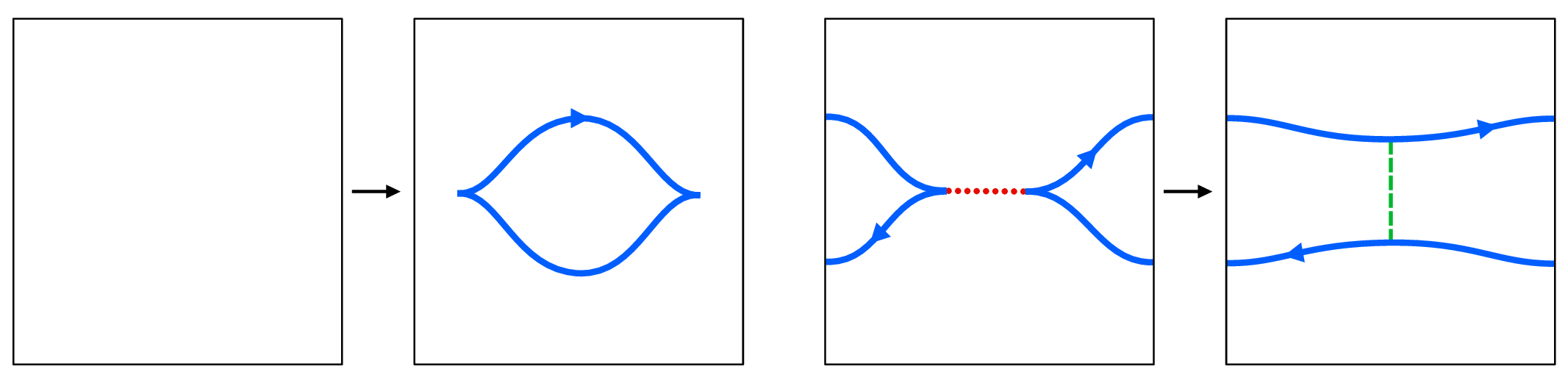}   
        
	\end{overpic}
	\caption{The decomposable moves, unknot birth and (ambient) Legendrian surgery.}
	\label{fig:pinchmove}
\end{figure}

A primary way to construct exact Lagrangian cobordisms is by concatenating elementary cobordisms which are traces of three moves in the front projection: Legendrian isotopy, isolated standard unknot birth, and Legendrian surgery (called a \textit{pinch move} in reverse); see \cref{fig:pinchmove}. The latter two induce Lagrangian $0$-handle and $1$-handle attachments. Such cobordisms are called \textit{decomposable} and were first considered by Chantraine \cite{chantraine2012noncollarable} and Ekholm, Honda, and Kálmán \cite{ekholm2012exactcobordisms}, while the surgery theory was developed more generally by Dimitroglou Rizell \cite{dimitroglourizell2016ambient}. Conway, Etnyre, and Tosun \cite{conway2017symplectic} extended decomposability to symplectizations of any contact $3$-manifold, and moreover showed (concurrently with Golla and Juhász \cite{golla2019functoriality}) that decomposable Lagrangian cobordisms in symplectizations are regular. Whether or not regularity implies decomposability is completely open.

The difference between regularity and decomposability is subtle. For instance, when $\Lambda_+ \neq 0$, standard Weinstein homotopy techniques (e.g. \cref{lemma:cot_bund_str}) imply that a regular cobordism is built out of Lagrangian handles of index of at most $1$ with respect to a certain Liouville vector field. However, decomposability requires such a handle decomposition induced specifically by the symplectization. This is analogous to the difference between \textit{ribbon} and \textit{handle-ribbon} in the smooth category; see \cref{sec:gradations} for additional discussion on the matter.    

Rather than asking whether a given cobordism is regular or decomposable, one can ask weaker (but still hard) questions about the induced relations on Legendrian knot types. Regarding the relationship between regularity and decomposability, the motivating open questions of this article are: 

\begin{question}\label{q:3}
If $\Lambda_-$ and $\Lambda_+$ are regularly cobordant, are they decomposably cobordant? 
\end{question}

\begin{question}\label{q:4}
In particular, is every regularly slice knot decomposably slice?
\end{question}

\noindent Our results serve as evidence for a negative answer to \cref{q:3} (see \cref{cor:neg_ans}), but for a positive answer to \cref{q:4} (see \cref{conj:slice} and its supporting evidence \cref{thm:main3}, \cref{remark:dec_fil_norm}, \cref{prop:reg_smth_ribbon}, and the discussion in \cref{sec:nec}). This potential disparity suggests that the question of regularity vs. decomposability is subtle and interesting. Moreover, the takeaway of \cref{sec:gradations} is that any subtlety is symplectic in nature, as opposed to being symptomatic of smooth $4$-manifold phenomena.

\subsection{Main results} We now specialize to the symplectization of $(S^3, \xi_{\mathrm{st}}).$ Our main results are \cref{thm:main3}, \cref{thm:cob_diag}, \cref{thm:main_sat}, \cref{thm:main_ruling}, and their associated corollaries. Discussion around the four theorems is organized into four corresponding subsections below.

\subsubsection{Regularly slice implies once-stably decomposably slice} 

First, we provide a stable affirmative answer to \cref{q:4} in a sense made precise by \cref{thm:main3}. Let $U$ denote the max-tb unknot, let $S_{\pm}(\Lambda)$ denote a single positive (resp. negative) stabilization of an oriented Legendrian knot $\Lambda$, and let $\Lambda_- \prec_{\ast} \Lambda_+$ where $\ast \in\{ \mathrm{reg}, \mathrm{dec}\}$ denote the existence of a regular (resp. decomposable) concordance from $\Lambda_-$ to $\Lambda_+$. 

\begin{theorem}\label{thm:main3}
Every regularly slice Legendrian knot is once-stably (strongly) decomposably slice. That is, if $U \prec_{\mathrm{reg}} \Lambda$, then $S_{\pm}(U) \prec_{\mathrm{dec}} S_{\pm}(\Lambda)$.
\end{theorem}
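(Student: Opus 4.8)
The plan is to reduce the statement about regular concordances to one about Weinstein structures, and then to run a handle-decomposition argument after stabilizing. Recall that $U \prec_{\mathrm{reg}} \Lambda$ means there is an exact Lagrangian concordance $L$ from $U$ to $\Lambda$ in the symplectization $(\R_s \times S^3, e^s\,\alpha_{\mathrm{st}})$ which is regular: there is a Weinstein structure $(\omega, Z, \phi)$ homotopic to the symplectization with $Z$ everywhere tangent to $L$. Capping off the negative end $U$ with the standard Lagrangian disk filling, we obtain a regular Lagrangian disk $D$ in the completion of the subcritical (in fact trivial) Weinstein filling, i.e.\ a regular Lagrangian disk in $\C^2$ (or a small Weinstein ball) with Legendrian boundary $\Lambda$; regularity is preserved under this capping because the standard filling of $U$ is itself regular and one can glue the Liouville vector fields along a cylindrical region. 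So the input becomes: $\Lambda$ bounds a regular Lagrangian disk in a Weinstein $4$-ball.

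Next I would invoke the structural consequence of regularity alluded to in the excerpt (the discussion following \cref{q:4}, and \cref{lemma:cot_bund_str}): a neighborhood of a regular Lagrangian disk is standard, i.e.\ Weinstein-deformation-equivalent to a neighborhood of the zero section in $T^*D^2$, and the complement of a regular Lagrangian is Weinstein. The goal is to convert the abstract handle decomposition coming from the Weinstein structure into the \emph{symplectization-induced} handle decomposition that decomposability requires --- births and pinch moves. This is exactly the gap between ``regular'' and ``decomposable'' (analogous to handle-ribbon vs.\ ribbon), and it is where stabilization enters: stabilizing $\Lambda$ once (positively or negatively) gives enough flexibility --- via the h-principle for Legendrian embeddings into the flexible/loose regime, or via Murphy's loose Legendrians and the fact that a stabilized Legendrian is loose in a Darboux ball --- to isotope the handle attachments so that the $1$-handles become honest ambient Legendrian surgeries (pinch moves) occurring in a front, and the whole cobordism becomes a concatenation of the decomposable elementary moves of \cref{fig:pinchmove}. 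I would carry this out by: (i) arranging the Weinstein Morse function on the complement so that critical points occur in the right order; (ii) identifying the ascending/descending manifolds of the index-$1$ (relative) handles with the belt/attaching spheres of pinch moves; (iii) using the stabilization to kill the obstruction to realizing these abstract attachments by fronts, producing a decomposable concordance from $S_\pm(U)$ to $S_\pm(\Lambda)$; and finally (iv) capping back off to get strong decomposable sliceness (strong meaning the decomposable filling is obtained by capping a decomposable concordance with the standard disk).

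The main obstacle I expect is step (iii): turning an \emph{abstract} Weinstein handle attachment (which regularity gives for free) into a \emph{geometric} pinch move visible in a Legendrian front, i.e.\ controlling not just the smooth isotopy class of the attaching data but its Legendrian/Lagrangian position relative to the symplectization's radial Liouville field. Stabilization is the tool that buys the needed flexibility, so the technical heart will be a careful application of the relevant h-principle (loose Legendrians, or Dimitroglou Rizell's ambient surgery framework in \cite{dimitroglourizell2016ambient}) to show that after one stabilization every index-$1$ handle in the complement can be slid to sit as a standard pinch, and that these moves can be performed one at a time in a collar without interfering. A secondary subtlety is bookkeeping the orientations and the choice of sign $\pm$ of the stabilization consistently along the concordance so that $S_\pm(U) \prec_{\mathrm{dec}} S_\pm(\Lambda)$ holds with a single, matching sign on both ends; this should follow from the fact that a single stabilization can be pushed along an exact concordance, but it needs to be checked against the decomposable moves explicitly.
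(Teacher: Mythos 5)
Your overall setup---reduce to a regular disk presented via a Weinstein handlebody homotopic to the symplectization, delay the cancellations, and try to realize the handle attachments diagrammatically---matches the paper's starting point (the Conway--Etnyre--Tosun presentation). But the mechanism you propose for your step (iii), invoking an h-principle for loose Legendrians after a single stabilization, does not exist in this dimension and is not what makes the argument work. Loose Legendrians and Murphy's h-principle require the Legendrian to have dimension at least $2$ (ambient contact dimension at least $5$); a once-stabilized Legendrian knot in a contact $3$-manifold is not loose and satisfies no such flexibility. The closest $3$-dimensional statement is Fuchs--Tabachnikov-type flexibility after \emph{sufficiently many} stabilizations, which is precisely the Etnyre--Leverson regime that the paper explicitly contrasts with \cref{thm:main3}; routing your argument through that would only recover the weaker ``sufficiently stabilized'' conclusion, not the single stabilization claimed.

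The paper's actual use of the one stabilization is concrete and diagrammatic rather than h-principle-based. After putting the Weinstein diagram in geometrically canceling position with $U$ disjoint from all attaching spheres, the obstruction is that $U$, viewed as an unknot in $\#^n S^1\times S^2$, must be isotoped past strands of the $2$-handle attaching spheres. The paper (i) chooses a convex Seifert disk for $U$ together with a contact vector field tangent to the $2$-handle attaching locus (\cref{lemma:seifert}), so that all obstructing intersections are normalized to lie sequentially along a single Reeb chord on the dividing set; and (ii) uses the single zigzag of $S_{\pm}(U)$ to trace this ``illegal isotopy'' by a chain of small max-tb unknots joined by nested Legendrian surgery arcs (\cref{lemma:stabsequence})---the symplectic analogue of the meridian-plus-band trick in the smooth proof of \cref{prop:smooth}. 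Sliding those surgery arcs over the canceling $2$-handles and erasing the handles leaves a decomposable concordance from a $\mathrm{tb}=-2$ unknot to $S_{\pm}(\Lambda)$. Note also that the conclusion is a \emph{concordance} from $S_{\pm}(U)$, not a disk filling ($S_{\pm}(U)$ is not fillable, not being max-tb), and ``strongly decomposable'' means all Legendrian surgeries commute (a single max-tb unlink with embedded surgery arcs), not that the filling is obtained by capping with a standard disk; both points are misstated in your step (iv).
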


\noindent Briefly, a decomposable cobordism is \textit{strongly decomposable} if all Legendrian surgeries commute; for more information, see \cref{subsec:sympelctic_grads} and \cref{subsec:decvsstrong}.

\begin{figure}[ht]
    \vskip-2cm
	\begin{overpic}[scale=.345]{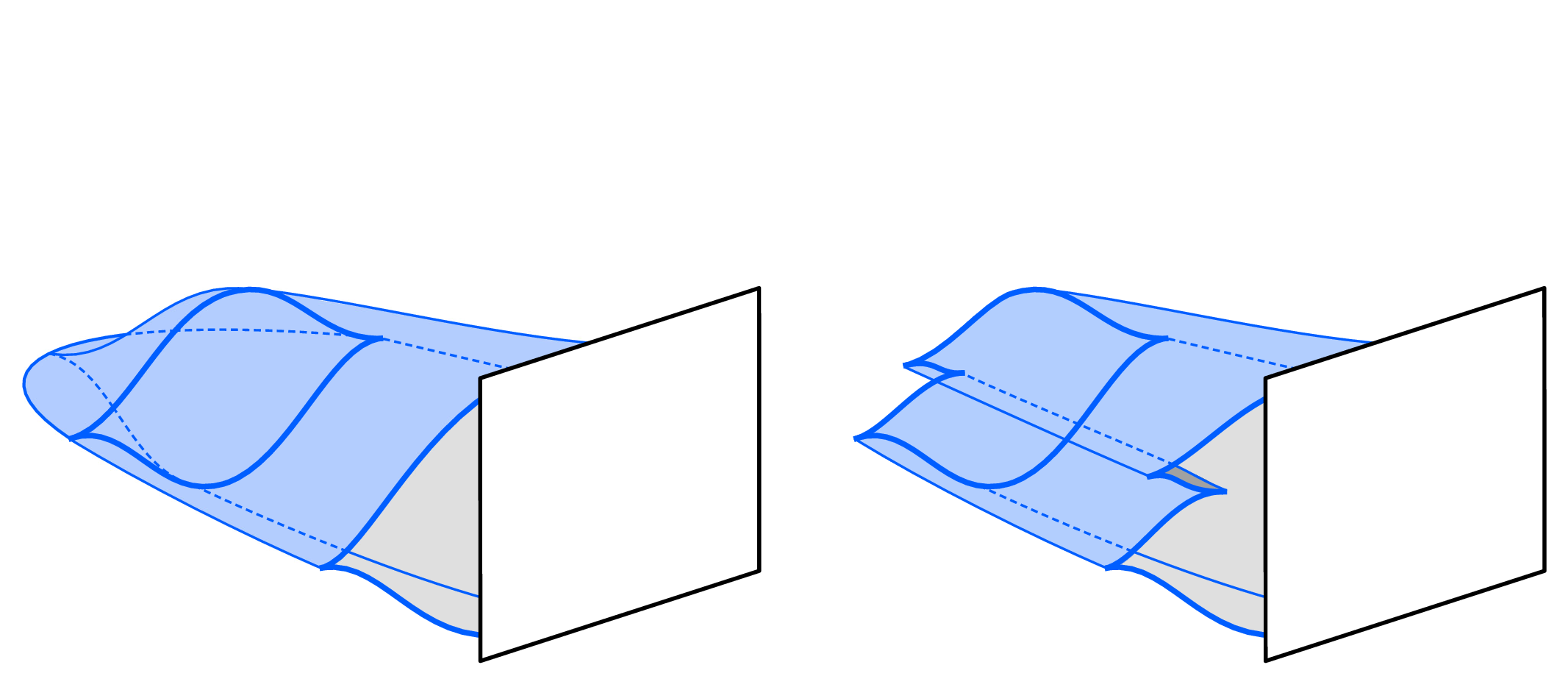}   
        \put(2,13){\small \textcolor{lightblue}{$U$}}
        \put(20,4.5){\small \textcolor{lightblue}{$\Lambda$}}
        \put(50,12){\small \textcolor{lightblue}{$S_{\pm}(U)$}}
        \put(67,4.25){\small \textcolor{lightblue}{$S_{\pm}(\Lambda)$}}
        \put(27,25){\scriptsize regular}
        \put(76,25){\scriptsize decomposable}
	\end{overpic}
	\caption{A schematic picture of \cref{thm:main3}. On the left is a regular slice disk, given as a concatenation of the standard filling of $U$ with a regular concordance. On the right is a decomposable concordance between the once-stabilized knots. The box is a placeholder for a Legendrian tangle.}
	\label{fig:main3}
\end{figure}

To contrast recent results, Dimitroglou Rizell and Golovko \cite{rizell2024instabilitylegendrianknottednessnonregular} produce non-regular concordances between sufficiently stabilized Legendrian knots, while Etnyre and Leverson \cite{etnyre2024lagrangian} approximate smooth ribbon cobordisms with decomposable cobordisms between sufficiently stabilized Legendrian links; see \cref{remark:ELremark} for additional discussion on the latter. Our result assumes regularity of concordance and produces a decomposable concordance after only one stabilization. It seems plausible that the single stabilization is unnecessary, though the proof used for \cref{thm:main3} does not go through. Nevertheless, we record the following two problems, discuss an alternate approach in \cref{sec:nec}, and provide additional context in \cref{subsec:sat} and \cref{subsec:rulings}. 

\begin{problem}\label{conj:slice}
Show that every regularly slice Legendrian knot is decomposably slice.    
\end{problem}

\begin{problem}\label{con:fill}
More generally, show that every regularly fillable Legendrian link is decomposably fillable.    
\end{problem}

\subsubsection{Diagrammatic presentation of regular cobordisms}\label{subsec:diag}

\cref{thm:main3} begins from the observation of Conway, Etnyre, and Tosun \cite[Theorem 1.10]{conway2017symplectic} that every regular Lagrangian disk filling admits the following presentation. First, let $U$ be a max-tb unknot with its standard Lagrangian disk filling. Then attach Weinstein $1$-handles and $2$-handles along attaching spheres that avoid $U$, so that the resulting Weinstein structure is homotopic to the symplectization, i.e., up to Legendrian Kirby calculus, the handles may be canceled. In the surgered contact manifold, $U$ is not necessarily the unknot; one identifies the knot $\Lambda_+$ being filled in the symplectization by sliding $U$ off of the $1$-handles to perform cancellations. See \cref{fig:m946ex} for an example. 

\begin{figure}[ht]
	\begin{overpic}[scale=.346]{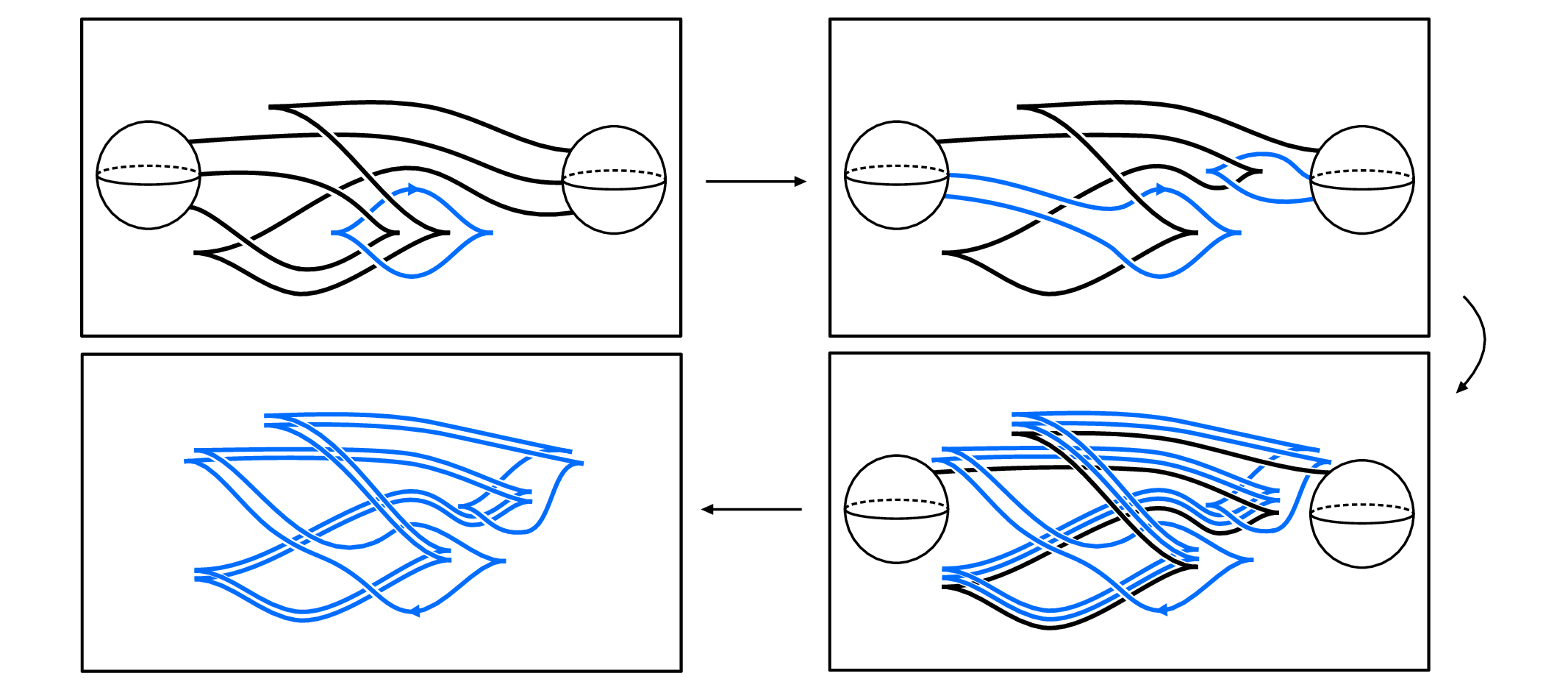}   
    \put(25,39){\scriptsize $(-1)$}
    \put(28,25){\scriptsize \textcolor{lightblue}{$U$}}
    \put(29,4){\scriptsize \textcolor{lightblue}{$\Lambda_+$}}
    \put(74,39){\scriptsize $(-1)$}
    \put(76,18){\scriptsize $(-1)$}
	\end{overpic}
	\caption{A regular disk filling of $\Lambda_+$, a Legendrian $\overline{9_{46}}$.}
	\label{fig:m946ex}
\end{figure}

This presentation is a diagrammatic interpretation of \cite[Proposition 2.3]{eliashberg2018flexiblelagrangians}. Our next result generalizes this to oriented cobordisms in general. We provide additional specifications for fillings of knots and concordances. 

\begin{remark}
The recent work of Dimitroglou Rizell and Golovko uses the same diagrammatic interpretation of \cite{eliashberg2018flexiblelagrangians}.    
\end{remark}

\begin{theorem}[Regular cobordism diagrams]\label{thm:cob_diag}
Let $L \subset \R \times S^3$ be a regular Lagrangian cobordism in the symplectization of $(S^3, \xi_{\mathrm{st}})$ from $\Lambda_-\neq \emptyset$ to $\Lambda_+\neq \emptyset$. There is a Weinstein handlebody diagram with ($n + k$)-many $1$-handles and a link $\Lambda_0\in (\#^{n+k} S^1\times S^2, \xi_{\mathrm{st}})$ satisfying the following properties. 
\begin{enumerate}
    \item The Weinstein structure is homotopic to the symplectization. 
    \item The link $\Lambda_0$ has geometric intersection number $2$ and algebraic intersection number $0$ with $n$-many $1$-handle belt spheres, and does not cross the other $1$-handles. 
    \item Before attaching $2$-handles, performing pinch moves on $\Lambda_0$ through the $1$-handles produces $\Lambda_-$. All $2$-handle attaching spheres are disjoint from $\Lambda_0$, and canceling the Weinstein handles transforms $\Lambda_0$ into $\Lambda_+$.
\end{enumerate}
Conversely, any cobordism constructed in this way is regular.
\end{theorem}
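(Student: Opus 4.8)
The plan is to run the proof of Conway--Etnyre--Tosun \cite[Theorem 1.10]{conway2017symplectic} for disk fillings in reverse and in greater generality; the one genuinely new point is the bookkeeping for the negative end $\Lambda_-$. \emph{Forward direction.} Assume $L$ is regular; we may assume $L$ connected, treating components separately. Truncating $(\R\times S^3,e^s\alpha)$ to a Liouville cobordism on $[-T,T]\times S^3$, regularity together with \cref{lemma:cot_bund_str} and \cite[Proposition 2.3]{eliashberg2018flexiblelagrangians} produces a Weinstein cobordism structure $\mathfrak W$, Weinstein-homotopic to the symplectization, whose Liouville field $X$ is tangent to $L$, a standard neighborhood $\nu(L)\cong T^*L$, and a Lyapunov function whose restriction to $L$ is a Morse function whose critical points are precisely those critical points of $\mathfrak W$ that lie on $L$. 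Since $L$ is a connected cobordism between nonempty Legendrian links, it is not closed (there are no closed Lagrangians in $\R\times S^3$) and the restricted Lyapunov function has no interior extrema, so every critical point of $\mathfrak W$ on $L$ is a Lagrangian saddle. Using that $\mathfrak W$ is homotopic to the symplectization, and by further Weinstein homotopies preserving tangency to $L$ together with Kirby cancellations performed away from $L$, I would arrange the handle decomposition of $\mathfrak W$ to consist of $m := n+k$ canceling index-$1$/index-$2$ pairs, with all $m$ index-$1$ handles attached first — so that $S^3$ becomes $(\#^{n+k}S^1\times S^2,\xi_{\mathrm{st}})$ — with all index-$2$ attaching spheres Legendrian and disjoint from $L$, and with each of the $n$ saddles of $L$ sitting at a distinct index-$1$ critical point; the remaining $k$ index-$1$ handles are then disjoint from $L$.

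Now let $\Lambda_0$ be the intersection of $L$ with a regular level of $\mathfrak W$ lying above all index-$1$ handles and below all index-$2$ handles. Near each of the $n$ special index-$1$ handles, $L$ is a saddle whose cocore arc runs across the belt $2$-sphere, so $\Lambda_0$ has geometric intersection $2$ and, by the orientation of $L$, algebraic intersection $0$ with those belt spheres, crosses none of the remaining $1$-handles, and is disjoint from all index-$2$ attaching spheres; this yields (2) and part of (3). Reading $L$ downward from this level, the portion between $\Lambda_-$ and $\Lambda_0$ is the trace of the $n$ Lagrangian saddles, so performing $n$ pinch moves on $\Lambda_0$ through the corresponding $1$-handles recovers $\Lambda_-$, which afterward crosses no $1$-handle and so lies in the $S^3$ summand. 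Reading $L$ upward, the index-$2$ handles are attached off $\Lambda_0$, and, since $\mathfrak W$ is homotopic to the symplectization, one may Kirby-cancel all $m$ handle pairs; this carries $\Lambda_0$ through the successive levels to the intersection of $L$ with the top copy of $(S^3,\xi_{\mathrm{st}})$, namely $\Lambda_+$. That $\mathfrak W$ is homotopic to the symplectization is (1), by construction.

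For the converse I would reconstruct $L$ directly from the diagram: starting with the cylinder over $\Lambda_-$ in the symplectization, pass upward through the $m$ index-$1$ Weinstein handles, letting $L$ acquire at the $n$ special ones the Lagrangian saddles that realize the reverse pinch moves (a Legendrian-surgery cobordism is decomposable, hence regular by \cite{conway2017symplectic}); then attach the $m$ index-$2$ handles along attaching spheres disjoint from $\Lambda_0$, hence from $L$; and finally Kirby-cancel all handle pairs, carrying $\Lambda_0$ to $\Lambda_+$ by the induced Legendrian isotopy. The resulting Weinstein structure is homotopic to the symplectization because the handles cancel, and $X$ can be kept tangent to $L$ throughout, since every step is the attachment of a standard Lagrangian handle, the attachment of a Weinstein handle away from $L$, or a handle cancellation performed away from $L$. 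Since a concatenation of regular cobordisms is regular, $L$ is regular.

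The hard part will be the first step: proving the cobordism version of \cite[Proposition 2.3]{eliashberg2018flexiblelagrangians} and carrying out every Weinstein homotopy and Kirby cancellation \emph{relative to} $L$ — preserving tangency of $X$ to $L$ while moving each saddle of $L$ onto an index-$1$ critical point, pushing $L$ off the $k$ auxiliary $1$-handles, and reducing the intersection of $L$ with each special $1$-handle to the minimal geometric value $2$. This is the analogue, for cobordisms with two Legendrian boundaries and a concave end, of the positioning arguments in \cite{conway2017symplectic,eliashberg2018flexiblelagrangians}, and it is essentially the only place where the rigidity of regularity is used.
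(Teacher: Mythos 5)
Your proposal is correct and follows essentially the same route as the paper: it invokes the Eliashberg--Ganatra--Lazarev decomposition of the regularizing Weinstein structure into $\D^*L$ plus a complementary cobordism $W_L$ (the paper packages this as \cref{lemma:regular_cot_decomp}), uses \cref{lemma:cot_bund_str} to arrange that all critical points on $L$ are index-$1$ saddles, takes $\Lambda_0$ to be the level of $L$ between the $1$-handles and the $2$-handles, and proves the converse by building the cobordism from coupled Weinstein--Lagrangian handles. The only quibble is that ``the restricted Lyapunov function has no interior extrema'' is something you must \emph{arrange} by a further homotopy (via the freedom in choosing the Morse function on $L$ in \cref{lemma:cot_bund_str}), not something that holds automatically, and your extra step of putting the handles into geometrically canceling position is unnecessary for the statement as given.
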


\begin{figure}[ht]
	\begin{overpic}[scale=.3475]{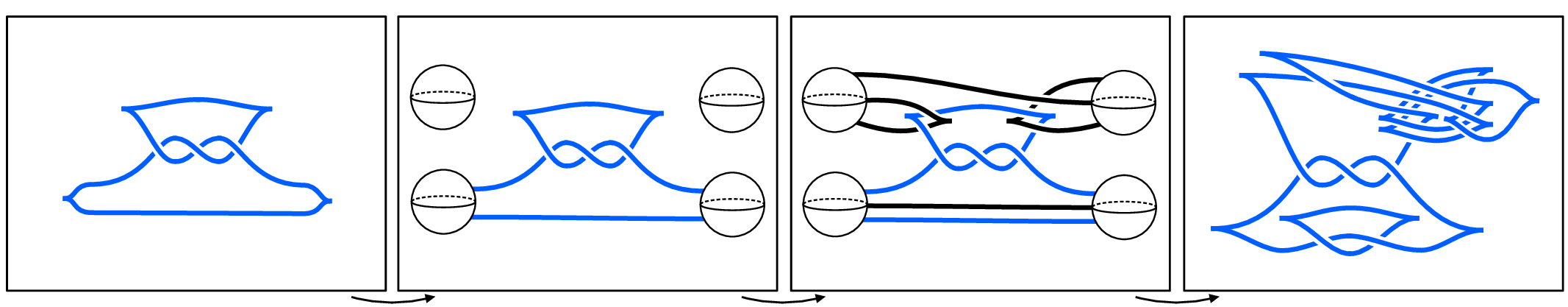}   
        \put(11.5,3){\scriptsize \textcolor{lightblue}{$\Lambda_-$}}
        \put(36.5,3){\scriptsize \textcolor{lightblue}{$\Lambda_0$}}
         \put(94.5,7){\scriptsize \textcolor{lightblue}{$\Lambda_+$}}
        \put(61,15){\tiny $(-1)$}
        \put(61,7.1){\tiny $(-1)$}
	\end{overpic}
	\caption{A regular (saddle) cobordism as described by \cref{thm:cob_diag} from the trefoil $\Lambda_-$ to some two-component Legendrian link $\Lambda_+$. Here $n=k=1$.}
	\label{fig:cobdiag}
\end{figure}


\begin{corollary}[Regular filling diagrams]\label{thm:fill_diag}
Let $L \subset \R \times S^3$ be a regular genus $g\geq 1$ Lagrangian filling of a Legendrian knot $\Lambda$. There is a Weinstein handlebody diagram with ($2g+k$)-many $1$-handles, homotopic to the symplectization, such that: 
\begin{enumerate}
    \item After attaching $2g$-many $1$-handles, $\Lambda \in (\#^{2g} S^1 \times S^2, \xi_{\mathrm{st}})$ is the \emph{Gompf cotangent knot} \cite{gompf1998handlebody} in \cref{fig:filldiag} and $L$ is its standard filling.  
    \item The $k$ additional $1$-handles and all $2$-handles are disjoint from $\Lambda$. 
\end{enumerate}
Conversely, any filling constructed in this way is regular. 
\end{corollary}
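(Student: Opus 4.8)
The plan is to deduce this from \cref{thm:cob_diag} by presenting the filling as a cobordism with nonempty negative end. As $L$ is connected with one boundary circle and genus $g$, and is regular, choose a handle decomposition of $L$ compatible with the ambient Liouville structure (via \cref{lemma:cot_bund_str}): one index-$0$ handle, exactly $2g$ index-$1$ handles, and no index-$2$ handles. The index-$0$ handle is the standard disk filling of a max-tb unknot $U$, and the traces of the $2g$ pinch moves assemble into a regular genus $g$ Lagrangian cobordism $L'$ from $U$ to $\Lambda$ with $\chi(L')=-2g$, whose concatenation with the standard disk of $U$ recovers $L$. The only thing to verify here is that removing the index-$0$ handle leaves a \emph{regular} cobordism, which holds because the standard disk of $U$ is regular and regularity is preserved under concatenation.

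Now apply \cref{thm:cob_diag} to $L'$, in the presentation for which the number $n$ of pinch moves equals $-\chi(L')=2g$. This produces a Weinstein handlebody with $2g+k$ one-handles, homotopic to the symplectization, together with a link $\Lambda_0$ that meets $2g$ of the belt spheres with geometric intersection $2$ and algebraic intersection $0$, is disjoint from the remaining $k$ one-handles and from all $2$-handle attaching spheres, and is taken to $U$ by pinching through those $2g$ handles. Reversing the $2g$ pinch moves, the standard disk of $U$ sweeps out a regular genus $g$ Lagrangian $\Sigma$ with one boundary component and $\partial\Sigma=\Lambda_0$, built by attaching $2g$ Lagrangian $1$-handles to a Lagrangian $0$-handle and passing over each of the $2g$ Weinstein one-handles twice. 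Because $\Sigma$ carries the whole skeleton of the ambient $2g$-one-handle Weinstein domain, that domain is Weinstein homotopic rel $\Sigma$ to $T^*\Sigma_{g,1}$ with $\Sigma$ as the zero section; hence $\Lambda_0$ is Legendrian isotopic to the boundary of the zero section, i.e. the Gompf cotangent knot of \cref{fig:filldiag} \cite{gompf1998handlebody}, and $\Sigma$ — and therefore $L$ — is its standard filling. This gives claim (1).

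For claim (2), \cref{thm:cob_diag} says the remaining $k$ one-handles and all $2$-handle attaching spheres are disjoint from $\Lambda_0$; since the handle cancellations carrying $\Lambda_0$ to $\Lambda_+=\Lambda$ fix $\Lambda_0$ (it avoids the handles involved), $\Lambda=\Lambda_0\subset \#^{2g}S^1\times S^2$ is disjoint from them. The converse is the converse half of \cref{thm:cob_diag}: the zero section of $T^*\Sigma_{g,1}$ is regular, and attaching Weinstein handles disjoint from it so that the total Weinstein structure is homotopic to the symplectization of $(S^3,\xi_{\mathrm{st}})$ preserves regularity while producing a genus $g$ filling of the resulting knot $\Lambda$.

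The main obstacle is the recognition step in the second paragraph — matching the Lagrangian built from the theorem's $2g$ pinch reversals with Gompf's cotangent picture. The substance is that a regular $2$-dimensional Lagrangian with one boundary circle, filling a Legendrian in $\#^{2g}S^1\times S^2$ and absorbing the full skeleton of the $2g$-handle Weinstein domain, is forced to be the zero section of $T^*\Sigma_{g,1}$ up to Weinstein homotopy; this rests on the uniqueness of the Weinstein structure on a surface cotangent bundle and on tracking the attaching data of the $2g$ pinch moves through \cite[Proposition 2.3]{eliashberg2018flexiblelagrangians} and \cite[Theorem 1.10]{conway2017symplectic}. A secondary point is the bookkeeping that a presentation with exactly $2g$ one-handles meeting $\Lambda_0$ can be arranged, which is where the Euler characteristic count and the minimality of the chosen handle decomposition enter.
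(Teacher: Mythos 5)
Your route differs from the paper's. You split $L$ into the standard disk of $U$ and a complementary genus-$g$ cobordism $L'$, apply \cref{thm:cob_diag} to $L'$, and then reassemble and recognize the cotangent bundle. The paper instead applies the cotangent-decomposition of \cref{lemma:regular_cot_decomp} (via the proof of \cref{thm:cob_diag}) \emph{directly to the filling}, with $\partial_- L = \emptyset$: this immediately yields $\D^*L \cup W_L$ where $\D^*L$ is a Weinstein domain with one index-$0$ and $2g$ index-$1$ critical points, so the disk-splitting step never arises. For the recognition of the Gompf knot, the paper caps $\D^*L$ off with a $2$-handle along $\partial_+ L$ to obtain $\D^*\Sigma_g$, cites Gompf's standard diagram for the closed-surface cotangent bundle, and erases the $(-1)$ coefficient. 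Your version of this step leans on ``uniqueness of the Weinstein structure on a surface cotangent bundle,'' which is a phrase to be wary of --- the paper itself notes (after \cref{lemma:cot_bund_str}) that cotangent bundles carry exotic Weinstein structures not even symplectomorphic to the standard one. What actually does the work is that the coupled Weinstein--Lagrangian handle attachments \emph{construct} the standard structure on $\D^*\Sigma_{g,1}$ with $L$ as the zero section (\cref{lemma:cot_bund_str}, \cref{lemma:regular_cot_decomp}), not an abstract uniqueness statement.

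There is also a genuine gap in your splitting step. You write that $L'$ is regular ``because the standard disk of $U$ is regular and regularity is preserved under concatenation.'' That implication runs the wrong way: closure of regularity under concatenation says that the union of two regular pieces is regular; it does not let you conclude that a piece of a regular filling is itself regular after removing a regular sub-piece. The statement you need is true, but it must be extracted from \cref{lemma:regular_cot_decomp} applied to the filling: after homotoping so that the Lagrangian $0$-handle of $L$ coincides with the Weinstein $0$-handle $\D^*D^2 \cong B^4$ and all remaining critical points lie above it, the portion of the Weinstein structure above the first regular level set containing $U$ is a cobordism with Liouville field tangent to $L'$. Once you invoke that lemma you have already produced the decomposition $\D^*L \cup W_L$ that the paper uses, which is why the detour through \cref{thm:cob_diag} buys nothing and only adds this extra claim to justify.
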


\begin{corollary}[Regular concordance diagrams]\label{thm:conc_diag}
Let $L \subset \R \times S^3$ be a regular Lagrangian concordance from $\Lambda_-$ to $\Lambda_+$. There is a Weinstein handlebody diagram homotopic to the symplectization with attaching locus disjoint from $\Lambda_-$ such that $L$ is cylindrical over $\Lambda_-$. Conversely, any concordance constructed in this way is regular.
\end{corollary}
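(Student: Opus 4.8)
The plan is to read off \cref{thm:conc_diag} from \cref{thm:cob_diag} in the special case $n=0$.

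\textbf{Forward direction.} A Lagrangian concordance $L$ from $\Lambda_-$ to $\Lambda_+$ is in particular a regular Lagrangian cobordism with $\Lambda_\pm\neq\emptyset$, so \cref{thm:cob_diag} provides a Weinstein handlebody diagram with $(n+k)$-many $1$-handles and a link $\Lambda_0\in(\#^{n+k}S^1\times S^2,\xi_{\mathrm{st}})$ satisfying properties (1)--(3). The crux is to argue that the construction can be arranged with $n=0$. The integer $n$ counts the $1$-handles through which $\Lambda_0$ must be pinched to recover $\Lambda_-$, and, tracing the construction behind \cref{thm:cob_diag}, these are in bijection with the $1$-handles of a handle decomposition of the surface $L$ relative to its negative end. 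Since $L$ is diffeomorphic to a cylinder $\Lambda_-\times[0,1]$, it is a product cobordism and admits a relative handle decomposition with no $0$- or $1$-handles; hence I would take $n=0$.

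With $n=0$, properties (2) and (3) degenerate: no pinch moves are needed, so $\Lambda_0=\Lambda_-$; the remaining $k$ $1$-handles and all $2$-handle attaching spheres are disjoint from $\Lambda_-=\Lambda_0$; the Weinstein structure is homotopic to the symplectization; and canceling the Weinstein handles transforms $\Lambda_-$ into $\Lambda_+$. In this diagram $L$ is the trivial cylinder $\R\times\Lambda_-$, disjoint from the entire attaching locus, which is precisely the asserted presentation. For the converse, given such a diagram --- Weinstein structure homotopic to the symplectization, attaching locus disjoint from $\Lambda_-$, and $L=\R\times\Lambda_-$ cylindrical over $\Lambda_-$ --- this is the $n=0$ instance of the converse direction of \cref{thm:cob_diag}, so $L$ is a regular Lagrangian cobordism from $\Lambda_-$ to the image $\Lambda_+$ of $\Lambda_-$ under the sequence of handle cancellations. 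That sequence is a contactomorphism of $(S^3,\xi_{\mathrm{st}})$ carrying the knot $\Lambda_-$ to $\Lambda_+$, so $\Lambda_+$ is a knot, and $L$ is a cylinder; hence $L$ is a concordance.

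\textbf{Main obstacle.} The step requiring care is the reduction to $n=0$: I expect the bulk of this (short) argument to be confirming that the proof of \cref{thm:cob_diag} is flexible enough to realize the minimal intrinsic handle count of the surface $L$ --- that is, that a product cobordism contributes no belt-sphere--crossing $1$-handles --- rather than being forced to introduce spurious pinch moves. Once that is in hand, the rest is formal bookkeeping with the conclusions of \cref{thm:cob_diag}.
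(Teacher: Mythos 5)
Your proposal is correct and takes essentially the same route as the paper: the paper's proof invokes \cref{lemma:cot_bund_str} to Weinstein homotope away all critical points on the cylinder $L$, which is exactly your reduction to the $n=0$ case of \cref{thm:cob_diag}, and the converse is handled the same way. The only difference is presentational (you cite the statement of \cref{thm:cob_diag} with $n=0$, while the paper re-runs its decomposition $((-\infty,0]\times S^3)\cup W_L$ directly).
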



\begin{figure}[ht]
	\begin{overpic}[scale=.3475]{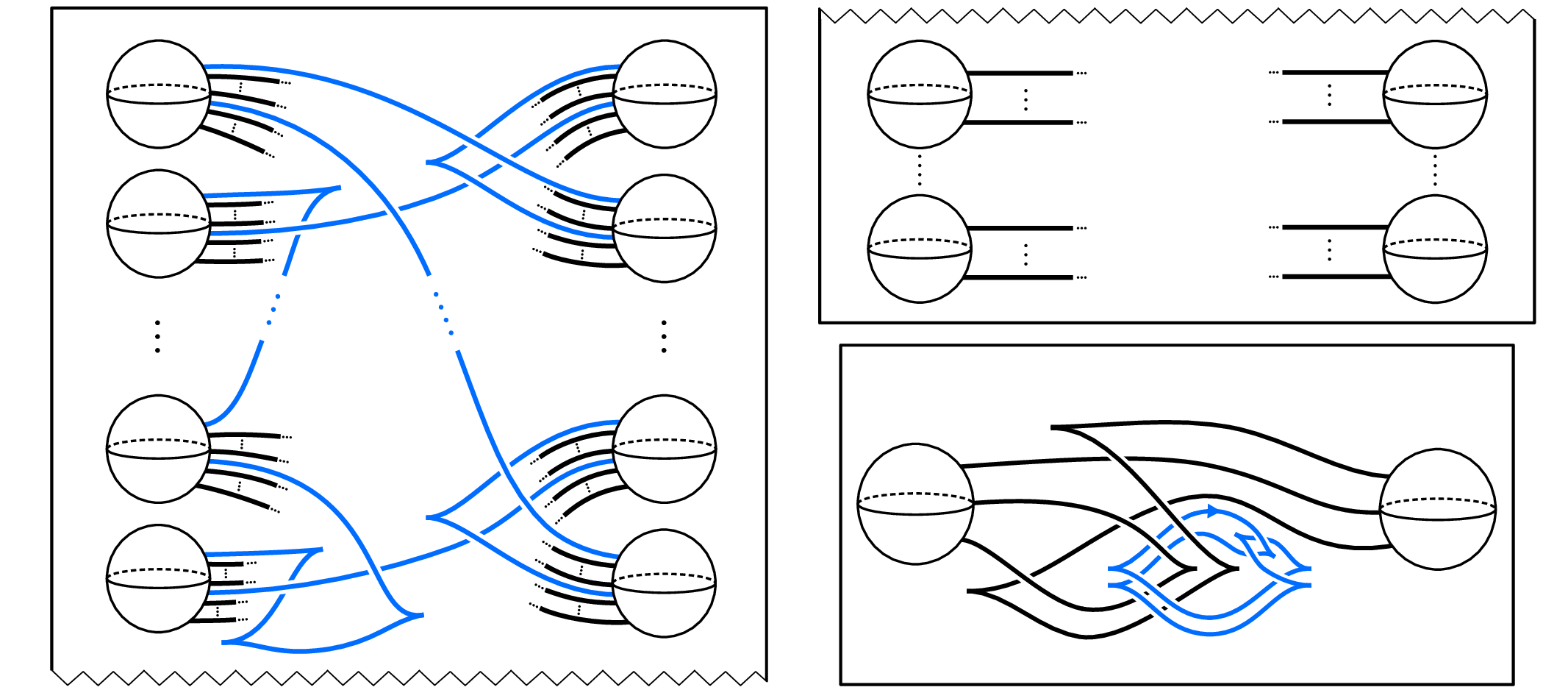}   
    \put(67.5,41){\tiny $(-1)$ on all black strands}
    \put(18,41.5){\tiny $(-1)$ on all black strands}
    \put(33,19){\small \textcolor{lightblue}{$\Lambda$}}
    \put(-0.5,37){\scriptsize $1$}
    \put(99,37){\scriptsize $2g+1$}
    \put(-0.5,29){\scriptsize $2$}
    \put(99,29){\scriptsize $2g+k$}
    \put(-3,14.5){\scriptsize $2g-1$}
    \put(-0.75,6.5){\scriptsize $2g$}
    
    \put(82,17.5){\scriptsize $(-1)$}
    
	\end{overpic}
	\caption{The diagrammatic presentation of a genus $g\geq 1$ filling on the left and continuing on the top right; an example of a regular concordance on the lower right. In fact, the concordance is from the trefoil to the Whitehead double of $\overline{9_{46}}$ as considered by \cite{cornwell2016concordance}; see the discussion in \cref{subsec:sat}.}
	\label{fig:filldiag}
\end{figure}

\subsubsection{Regularity and satelliting}\label{subsec:sat}

One disadvantage of the decomposable class is that it is unclear when it is preserved under geometric constructions such as satelliting. In fact, via a satellite operation, Cornwell, Ng, and Sivek constructed a concordance from the trefoil to the Whitehead double of $\overline{9_{46}}$ and conjectured that it is not decomposable \cite[Conjecture 3.4]{cornwell2016concordance}; see \cref{fig:satellite}. Our next result is a corollary of the above diagrammatic presentations and shows that regularity of concordance is preserved under satelliting.

\begin{figure}[ht]
	\begin{overpic}[scale=.3465]{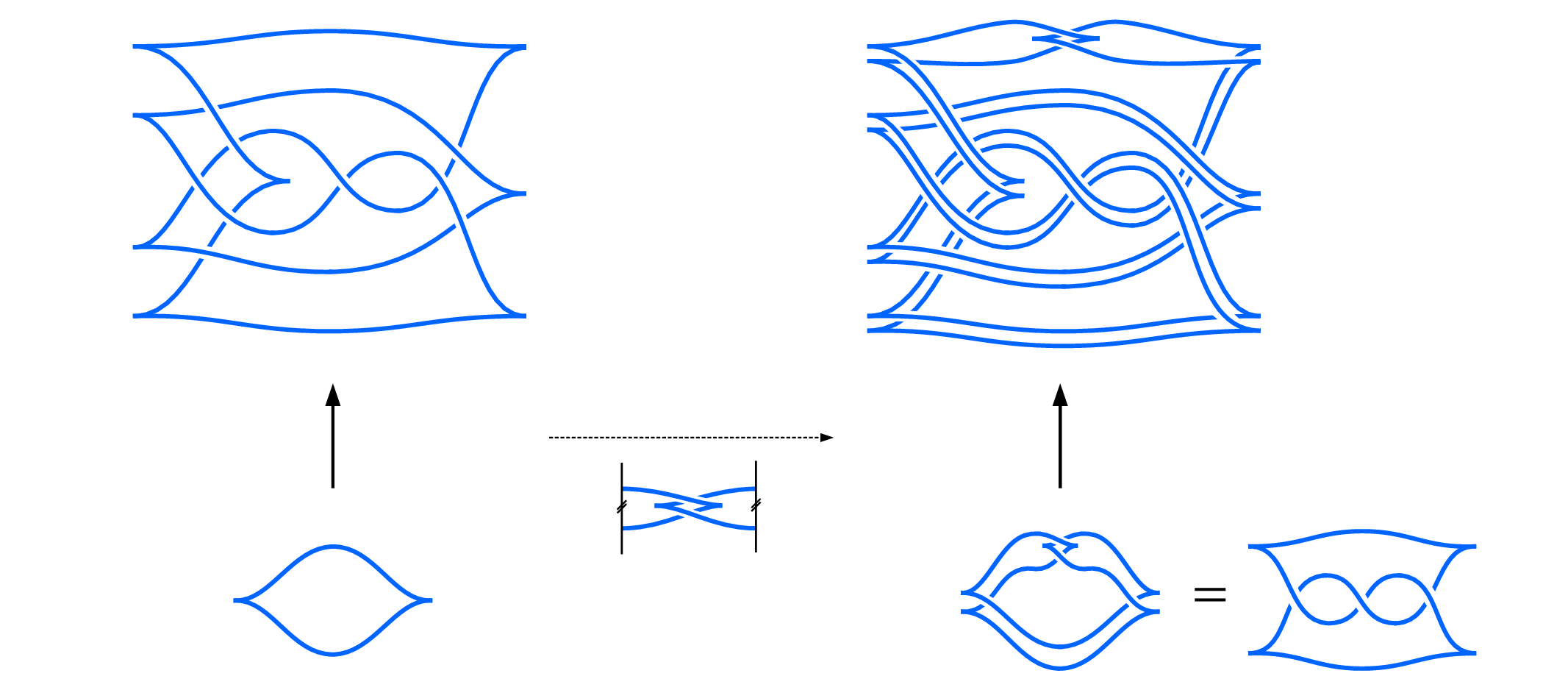}   
        \put(40.25,17){\footnotesize satellite}
        
	\end{overpic}
	\caption{Satelliting the concordance $U \prec_{\mathrm{reg}} \overline{9_{46}}$ on the left via the Whitehead pattern. By \cref{thm:main_sat}, the concordance on the right --- the subject of \cite[Conjecture 3.4]{cornwell2016concordance} --- is regular.}
	\label{fig:satellite}
\end{figure}

Before a precise statement, we briefly recall the Legendrian satellite operation. Let $\Lambda \subset (Y, \xi)$ be a Legendrian \textit{companion} link, and let $\Lambda' \subset J^1(S^1)$ be any Legendrian \textit{pattern} link in the standard solid torus. Let $\psi:J^1(S^1) \to N(\Lambda)$ be a contactomorphism onto a standard neighborhood of $\Lambda$, and let $S(\Lambda, \Lambda'):=\psi(\Lambda')$. Cornwell, Ng, and Sivek showed that if $\Lambda_- \prec \Lambda_+$, then $S(\Lambda_-, \Lambda')\prec S(\Lambda_+, \Lambda')$.

\begin{theorem}\label{thm:main_sat}
Suppose that $\Lambda_- \prec_{\mathrm{reg}} \Lambda_+$. Let $\Lambda'\subset J^1(S^1)$ be a Legendrian pattern link. Then $S(\Lambda_-, \Lambda') \prec_{\mathrm{reg}} S(\Lambda_+, \Lambda')$.     
\end{theorem}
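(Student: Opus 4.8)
The plan is to read the conclusion off the diagrammatic characterization of regular concordances, \cref{thm:conc_diag}, exploiting the fact that the Legendrian satellite operation is \emph{local}: $S(\Lambda,\Lambda')$ lives in an arbitrarily small standard neighborhood $N(\Lambda)$ of the companion. In the Weinstein handlebody diagram that \cref{thm:conc_diag} attaches to a given regular concordance $\Lambda_-\prec_{\mathrm{reg}}\Lambda_+$, the entire attaching locus is disjoint from $\Lambda_-$, hence — after shrinking — disjoint from a standard neighborhood of $\Lambda_-$. One can therefore replace $\Lambda_-$ by $S(\Lambda_-,\Lambda')$ inside the \emph{same} diagram without touching the handles, and the converse half of \cref{thm:conc_diag} produces a regular concordance for free; the only remaining point is to identify the other endpoint as $S(\Lambda_+,\Lambda')$.

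Concretely I would proceed in four steps. (i) Apply the forward direction of \cref{thm:conc_diag} to $\Lambda_-\prec_{\mathrm{reg}}\Lambda_+$: this gives a Weinstein handlebody diagram $\mathcal{D}$, homotopic to the symplectization of $(S^3,\xi_{\mathrm{st}})$, with attaching locus $A\subset S^3$ disjoint from $\Lambda_-$, such that the regular concordance is the Lagrangian cylinder $\R\times\Lambda_-$, with $\Lambda_-$ carried to $\Lambda_+$ by the Weinstein homotopy. (ii) Since $A$ is closed and disjoint from the compact set $\Lambda_-$, pick a standard contact neighborhood $N(\Lambda_-)$ (one copy of $J^1(S^1)$ per component) small enough to be disjoint from $A$, with contactomorphism $\psi$, and set $\Gamma:=S(\Lambda_-,\Lambda')=\psi(\Lambda')\subset N(\Lambda_-)$, so $\Gamma\cap A=\emptyset$. (iii) The Lagrangian cylinder $\R\times\Gamma$ is an exact Lagrangian concordance disjoint from all handles of $\mathcal{D}$, so by the converse direction of \cref{thm:conc_diag} it is a \emph{regular} concordance from $S(\Lambda_-,\Lambda')$ to the Legendrian $\Gamma'\subset S^3$ obtained by tracking $\Gamma$ through the Weinstein homotopy. (iv) Identify $\Gamma'=S(\Lambda_+,\Lambda')$.

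Step (iv) is where the content lies, and I expect it to be the main obstacle. The Weinstein homotopy normalizing $\mathcal{D}$ is realized on the positive contact boundary by a contactomorphism $F$ of $(S^3,\xi_{\mathrm{st}})$ — a finite sequence of Legendrian Kirby moves: handle slides (ambient Legendrian isotopies of the relevant surgered contact manifold) and canceling $1$--$2$ pairs (the canonical cancellation contactomorphisms) — with $F(\Lambda_-)=\Lambda_+$ up to Legendrian isotopy. Each such move is an honest contactomorphism, hence carries a standard neighborhood to a standard neighborhood and preserves the contact framing; moves not involving $\Lambda_-$ occur away from $N(\Lambda_-)$, while slides of $\Lambda_-$ itself carry $N(\Lambda_-)$, and with it the pattern $\psi(\Lambda')$, rigidly along. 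One must check carefully that these slides are compatible with the neighborhood structure, but granting this, $F$ restricts to a contactomorphism of $N(\Lambda_-)$ onto a standard neighborhood of $\Lambda_+$, and by naturality of the Legendrian satellite under contactomorphisms (the principle underlying Cornwell--Ng--Sivek's construction, together with uniqueness of standard neighborhoods up to contact isotopy) we get $\Gamma'=F(\psi(\Lambda'))=(F\circ\psi)(\Lambda')=S(\Lambda_+,\Lambda')$. Thus $\R\times\Gamma$, viewed in the Weinstein cobordism of $\mathcal{D}$, is a regular concordance from $S(\Lambda_-,\Lambda')$ to $S(\Lambda_+,\Lambda')$. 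As a consistency check and alternative, one could instead satellite the regular concordance directly via the Lagrangian neighborhood theorem (as in Cornwell--Ng--Sivek) and re-verify regularity against the definition via a Weinstein structure tangent to $L$; the diagrammatic route above is preferable since it recycles \cref{thm:conc_diag} rather than re-checking regularity from scratch.
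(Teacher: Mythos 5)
Your proposal is correct and follows essentially the same route as the paper: the paper's proof also normalizes the concordance via \cref{thm:conc_diag} to a trivial cylinder over $\Lambda_-$ with all handles disjoint, satellites $\Lambda_-$ in place inside the same diagram, and invokes the converse of \cref{thm:conc_diag}. Your step (iv) --- tracking the satellite through the Kirby moves to identify the positive end as $S(\Lambda_+,\Lambda')$ --- is treated more explicitly than in the paper, which leaves that identification implicit.
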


\begin{corollary}\label{cor:neg_ans}
There is a regular concordance from the trefoil to the Whitehead double of $\overline{9_{46}}$. Consequently, the conjectural non-decomposably concordant relation of these knots \cite[Conjecture 3.4]{cornwell2016concordance} implies a negative answer to \cref{q:3}.    
\end{corollary}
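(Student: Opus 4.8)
The plan is to feed the known regular concordance $U \prec_{\mathrm{reg}} \overline{9_{46}}$ into \cref{thm:main_sat} with the Whitehead pattern and then recognize the output as the Cornwell--Ng--Sivek concordance.

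First I would record that there is a regular Lagrangian concordance from the max-tb unknot $U$ to a Legendrian $\overline{9_{46}}$. This is the left side of \cref{fig:satellite}, and it can be read off \cref{fig:m946ex}: the Conway--Etnyre--Tosun presentation \cite[Theorem 1.10]{conway2017symplectic} recalled in \cref{subsec:diag} gives a regular Lagrangian disk filling of $\overline{9_{46}}$ built from the standard disk of $U$ by attaching cancelling Weinstein $1$- and $2$-handles, and — reading \cref{fig:main3} with $\Lambda=\overline{9_{46}}$ — splitting this filling at the level where $U$ is identified exhibits it as the standard disk of $U$ concatenated with a regular concordance $U\prec_{\mathrm{reg}}\overline{9_{46}}$. (Equivalently, one may simply observe that the Lagrangian concordance $U\to\overline{9_{46}}$ underlying the Cornwell--Ng--Sivek satellite is itself regular.)

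Next I would apply \cref{thm:main_sat} to $U\prec_{\mathrm{reg}}\overline{9_{46}}$ with $\Lambda'$ the Legendrian Whitehead pattern of \cite{cornwell2016concordance}, obtaining a regular Lagrangian concordance $S(U,\Lambda')\prec_{\mathrm{reg}}S(\overline{9_{46}},\Lambda')$ as in \cref{fig:satellite}. Since the companion is the unknot, $S(U,\Lambda')$ is the twist knot determined by the clasp and framing of $\Lambda'$, which we arrange to be a Legendrian trefoil, while $S(\overline{9_{46}},\Lambda')$ is by construction the corresponding Legendrian Whitehead double of $\overline{9_{46}}$. Choosing $\Lambda'$ and the standard-neighborhood contactomorphism $\psi:J^1(S^1)\to N(\Lambda)$ exactly as in \cite{cornwell2016concordance}, these two ends are precisely the two knots appearing in \cite[Conjecture 3.4]{cornwell2016concordance}, so we have produced a regular concordance between them; this is the first assertion. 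For the second: if \cite[Conjecture 3.4]{cornwell2016concordance} holds, these two Legendrian knots are not decomposably concordant, and since they are regularly concordant the implication in \cref{q:3} fails.

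The step most deserving of care is the identification inside the satellite construction: one must confirm that the Legendrian pattern $\Lambda'$ and the neighborhood contactomorphism used to invoke \cref{thm:main_sat} can be taken to agree with the conventions of \cite{cornwell2016concordance}, so that $S(U,\Lambda')$ and $S(\overline{9_{46}},\Lambda')$ are Legendrian isotopic to the knots named in \cite[Conjecture 3.4]{cornwell2016concordance} rather than to some other framed Whitehead doubles. Granting that bookkeeping — which is essentially forced, since Cornwell--Ng--Sivek's concordance is itself obtained by satelliting a concordance $U\to\overline{9_{46}}$ by this pattern — the corollary is immediate from \cref{thm:main_sat}.
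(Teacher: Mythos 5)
Your proposal is correct and takes essentially the same route as the paper: produce a regular concordance $U \prec_{\mathrm{reg}} \overline{9_{46}}$ (the paper gets this slightly more directly from the fact that $\overline{9_{46}}$ is decomposably slice, hence regularly slice by \cite{conway2017symplectic}, with the slice disk splitting as the standard disk of $U$ followed by a regular concordance), then apply \cref{thm:main_sat} with the Whitehead pattern and identify the output with the Cornwell--Ng--Sivek concordance. The extra bookkeeping you flag about matching the pattern and neighborhood conventions of \cite{cornwell2016concordance} is handled in the paper simply by pointing to the figures, and, as you note, is forced by the fact that their concordance is itself defined as this satellite.
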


In \cite[Theorem 1.4]{guadagni2022satellites}, Guadagni, Sabloff, and Yacavone generalized the operation of \cite{cornwell2016concordance} to satellites of Lagrangian cobordisms, with additional twists of the pattern link to account for nonzero genus. They provided a second and distinct construction that is decomposability-preserving, with the drawback that it applies only to a subclass of decomposable cobordisms. Following \cite{cornwell2016concordance}, it is unlikely that their first construction preserves decomposability in general. However, given \cref{thm:main_sat}, the following question is natural. 

\begin{question}
Does the cobordism satelliting operation of \cite[Theorem 1.4]{guadagni2022satellites} preserve regularity of cobordance?     
\end{question}

We use the decomposability-preserving satellite operation of \cite[Theorem 1.6]{guadagni2022satellites} in \cref{sec:nec} to construct candidate decomposably slice knots that may not be strongly decomposably slice; see \cref{prop:slicesats} and \cref{q:satstrong}. 

Etnyre and Leverson \cite{etnyre2024lagrangian} recently defined a notion of \textit{stabilization} of Lagrangian cobordisms. As Legendrian stabilization is a special case of the satellite operation --- the pattern link being a stabilized $S^1$-core --- \cref{thm:cob_diag} allows us to prove: 

\begin{corollary}\label{cor:EL_stab}
The stabilization operation of Etnyre and Leverson \cite{etnyre2024lagrangian} preserves regularity of Lagrangian cobordism.    
\end{corollary}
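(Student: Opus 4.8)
The plan is to recognize the Etnyre--Leverson stabilization of a Lagrangian cobordism $L$ from $\Lambda_-$ to $\Lambda_+$ as a Legendrian satellite, and then to run the argument behind \cref{thm:main_sat} on the general diagram supplied by \cref{thm:cob_diag}. First I would spell out the identification. The stabilization of \cite{etnyre2024lagrangian} is a modification of $L$ supported in a standard neighborhood of an embedded Legendrian arc meeting the slices transversally, and at the level of fronts this is precisely the effect of the cobordism satellite operation applied with companion $L$ and pattern $\Lambda'\subset J^1(S^1)$ equal to a once-stabilized $S^1$-core, i.e.\ the $0$-section stabilized once. Since $\Lambda'$ has winding number one, on the cylindrical ends one recovers $S(\Lambda_\pm,\Lambda') = S_\pm(\Lambda_\pm)$, so the EL stabilization of $L$ is Lagrangian isotopic rel ends to the satellite cobordism from $S_\pm(\Lambda_-)$ to $S_\pm(\Lambda_+)$. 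Making this matching of local models precise --- and confirming the construction is well defined up to the relevant equivalence --- is the main point to nail down.

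Next, assuming $L$ is regular, I would apply \cref{thm:cob_diag} to obtain a Weinstein handlebody diagram, homotopic to the symplectization, with link $\Lambda_0\subset \#^{n+k}S^1\times S^2$ and handles satisfying conditions (1)--(3). I then satellite the picture: choose a standard contact neighborhood $N(\Lambda_0)$ small enough to be disjoint from all $2$-handle attaching spheres (possible since $\Lambda_0$ is) and to meet the $1$-handles exactly as $\Lambda_0$ does, and replace $\Lambda_0$ by $\widetilde{\Lambda}_0 := S(\Lambda_0,\Lambda')$, confining the stabilization cusps to a sub-arc disjoint from the $1$-handle belt spheres. Because $\Lambda'$ has winding number one, $\widetilde{\Lambda}_0$ still has geometric intersection $2$ and algebraic intersection $0$ with the same $n$ belt spheres, is disjoint from the remaining $k$ $1$-handles, and is disjoint from all $2$-handle attaching spheres; the Weinstein structure is untouched, so conditions (1) and (2) persist. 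For condition (3), the pinch moves taking $\Lambda_0$ to $\Lambda_-$ through the $1$-handles are carried by the satellite to pinch moves taking $\widetilde{\Lambda}_0$ to $S(\Lambda_-,\Lambda') = S_\pm(\Lambda_-)$ --- this uses the now-standard fact that a satellite of a pinch move is realized by pinch moves, exactly as in \cite{cornwell2016concordance} and in the proof of \cref{thm:main_sat} --- while canceling the Weinstein handles (a sequence of handle slides and Legendrian cancellation moves, sliding $\Lambda_0$ off the $1$-handles) transforms $\widetilde{\Lambda}_0$ into $S(\Lambda_+,\Lambda') = S_\pm(\Lambda_+)$ by functoriality of the satellite under Legendrian isotopy.

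Finally, the converse statement in \cref{thm:cob_diag} applies to the new diagram: it describes a regular Lagrangian cobordism, namely the satellite $S(L,\Lambda')$, which by the first step is the Etnyre--Leverson stabilization of $L$; hence that stabilization is regular. The only genuinely delicate step is the first one --- matching the Etnyre--Leverson local model with the satellite and checking that the stabilization can be inserted into the handlebody picture with its cusps kept clear of the $1$-handle belt spheres, so that the intersection data of \cref{thm:cob_diag}(2) is literally preserved. Everything downstream is the bookkeeping already carried out for \cref{thm:main_sat}.
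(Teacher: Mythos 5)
Your proposal is correct and follows essentially the same route as the paper: reduce to the diagrammatic presentation of \cref{thm:cob_diag}, insert the stabilization into the link $\Lambda_0$ away from the handle data, and invoke the converse direction of \cref{thm:cob_diag}. The one ``delicate step'' you flag --- locating the stabilization so that it really reproduces the Etnyre--Leverson operation on $L$ --- is exactly what the paper resolves: it takes the arc $c\subset L$ required by the stabilization operation, uses regularity and \cref{lemma:cot_bund_str} to make the Liouville vector field tangent to and nonvanishing along $c$, so that $c$ descends to a single marked point on $\Lambda_0$ at which one stabilizes, rather than satelliting the whole diagram.
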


\subsubsection{Regularity and normal rulings}\label{subsec:rulings}

A \textit{normal ruling} of a Legendrian front is a pairing of left and right cusps with colored paths that enclose, after resolving certain \textit{(normally) switched crossings}, standard max-tb planar disks; see \cref{fig:ruling1}. We defer a precise review to \cref{sec:background}.

First considered by Eliashberg \cite{eliashberg1987wave}, rulings of fronts and their combinatorial applications to Legendrian links were fleshed out by Fuchs \cite{fuchs2003rulings} and Chekanov and Pushkar \cite{chekanov2007pushkar}. Counts of normal rulings are a Legendrian isotopy invariant, and existence of a normal ruling is connected with existence of augmentations of the Chekanov-Eliashberg DGA \cite{fuchs2003rulings,fuchs2004invariants,sabloff2005augmentations,leverson2014augmentations}. 

\begin{figure}[ht]
	\begin{overpic}[scale=.25]{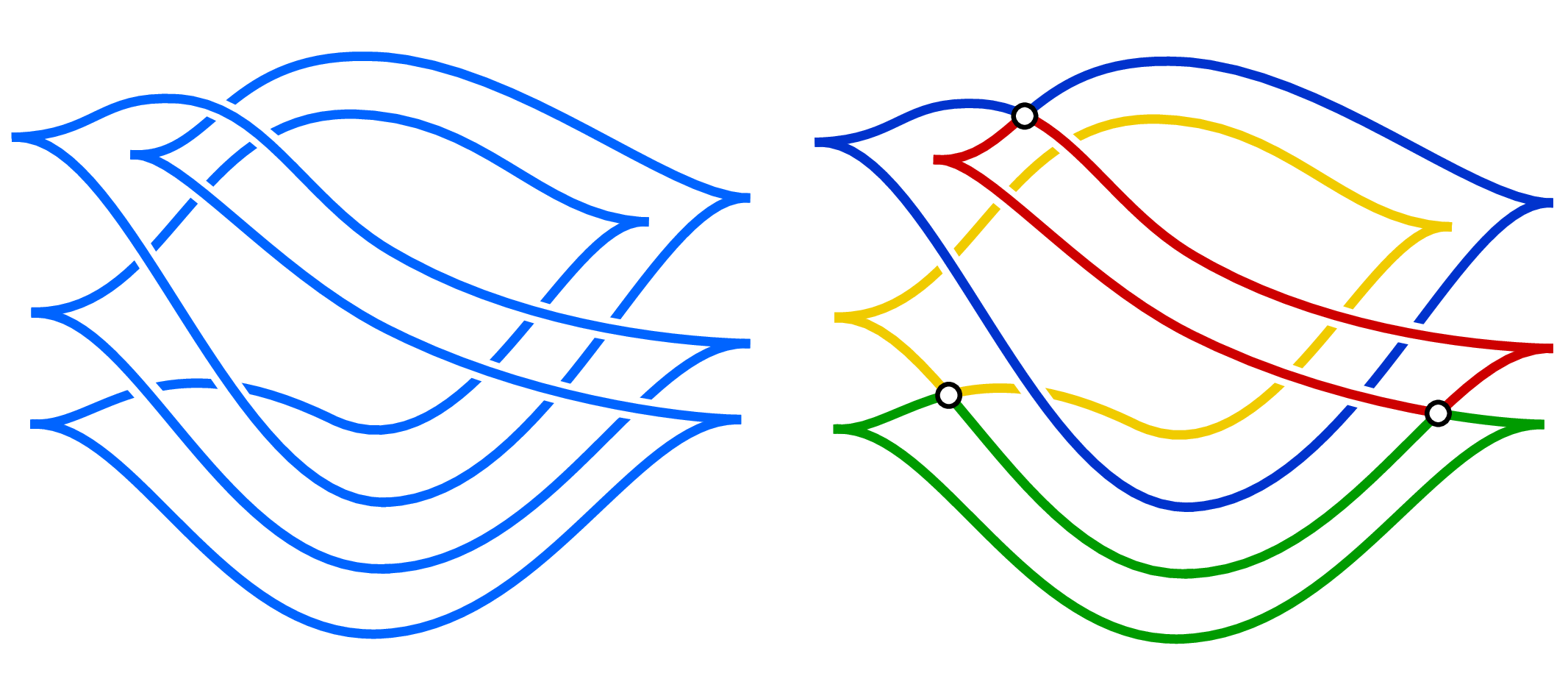}   
        
	\end{overpic}
	\caption{A front projection of the max-tb unknot (left) and its unique normal ruling (right).}
	\label{fig:ruling1}
\end{figure}

An immediate consequence of the local models for decomposable moves is the fact that if $\Lambda_-$ admits a normal ruling and $L$ is a decomposable cobordism from $\Lambda_-$ to $\Lambda_+$, then $\Lambda_+$ inherits a normal ruling from that of $\Lambda_-$ induced by the cobordism. This fact is used in \cite{cornwell2016concordance}, for example, to obstruct the existence of decomposable cobordisms. Our next result extends this to regular cobordisms.

\begin{theorem}\label{thm:main_ruling}
Let $L\subset \R \times S^3$ be a regular cobordism in the symplectization from $\Lambda_-$ to $\Lambda_+$. Suppose that $\Lambda_-$ admits a normal ruling. Then $L$ induces a normal ruling of $\Lambda_+$.     
\end{theorem}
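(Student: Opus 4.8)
The plan is to leverage the diagrammatic presentation of regular cobordisms from \cref{thm:cob_diag} and reduce the statement to the known decomposable case. Recall that by \cref{thm:cob_diag}, a regular cobordism $L$ from $\Lambda_-$ to $\Lambda_+$ is presented by a Weinstein handlebody with $(n+k)$-many $1$-handles and a link $\Lambda_0$ such that: performing pinch moves on $\Lambda_0$ through the $n$ relevant $1$-handles recovers $\Lambda_-$ (so there is a \emph{decomposable} cobordism from $\Lambda_-$ up to $\Lambda_0$ after the handle attachments); and then canceling all the Weinstein handles (via Legendrian Kirby moves/isotopies together with handle cancellations) transforms $\Lambda_0$ into $\Lambda_+$. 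The key observation is that \emph{each} of these operations — pinch moves, Legendrian isotopies, and handle cancellation against a disjoint canceling $1$-/$2$-handle pair — behaves predictably with respect to normal rulings.

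First I would assemble the ``ruling transfer'' toolkit, proving three local lemmas (or citing them where already folklore). (i) If $\Lambda$ admits a normal ruling and $\Lambda'$ is obtained by a pinch move (Legendrian $1$-handle / Legendrian surgery), then $\Lambda'$ admits an induced normal ruling — this is exactly the decomposable statement quoted just before \cref{thm:main_ruling}, applied to a single elementary cobordism. (ii) Normal rulings are preserved (indeed transported) under Legendrian isotopy — standard, e.g. Chekanov–Pushkar/Fuchs, since ruling counts are Legendrian invariants and the bijection is realized by Reidemeister moves on fronts. (iii) The crucial new ingredient: one must understand what happens to a normal ruling when one passes a knot $\Lambda_0$ over a canceling $1$-handle/$2$-handle pair inside a connected sum $\#^{m} S^1\times S^2$ and then deletes that pair. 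Here I would work in the Gompf-style standard Legendrian front for $S^1\times S^2$ summands, where a $1$-handle appears as a pair of balanced ``dotted'' spheres and its canceling $2$-handle as a $(-1)$-framed unknot linking it once; the normal ruling notion extends to fronts in $\#^m S^1\times S^2$ (rulings of fronts with ``through'' strands across the $1$-handle discs), and I would check that the handle-cancellation move — sliding $\Lambda_0$ off the $1$-handle and deleting the cancelling pair — carries a normal ruling of the ``before'' front to one of the ``after'' front. Morally this is because the local picture of the cancellation is itself built from Legendrian isotopies and the inverse of a pinch move restricted away from $\Lambda_0$, so it does not disturb the paired-disc structure on $\Lambda_0$; but making this precise for normal (as opposed to merely graded) rulings, keeping track of the normality condition at switched crossings, is where the real work lies.

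With the toolkit in hand, the proof is a concatenation argument. Start with a normal ruling $\rho_-$ of $\Lambda_-$. By (i) applied $n$ times (once per pinch move through the relevant $1$-handles), we get an induced normal ruling $\rho_0$ of $\Lambda_0$, now viewed as a front in $\#^{n+k}S^1\times S^2$ — one must check that after attaching the $2$-handles (whose attaching spheres are disjoint from $\Lambda_0$ by property (3) of \cref{thm:cob_diag}) the ruling of $\Lambda_0$ is unaffected, since a ruling only sees the front of $\Lambda_0$ itself. Then perform the sequence of Legendrian isotopies and handle cancellations of property (3); by (ii) and (iii), each step carries the normal ruling forward, and after all $(n+k)$ cancellations we land on a normal ruling $\rho_+$ of $\Lambda_+$ in $(S^3,\xi_{\mathrm{st}})$. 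This $\rho_+$ is by construction the ruling of $\Lambda_+$ induced by $L$.

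The main obstacle I anticipate is step (iii): verifying that passing a strand over a cancelling $1$-/$2$-handle pair respects the \emph{normality} condition of the ruling, not just the existence of a ruling. Handle slides can create and destroy crossings in the front, and one must confirm that newly created switched crossings still satisfy the normal (``no nested/interleaved'' at switches) configuration and that the paired-disc structure survives. I would handle this by choosing the most economical front models for the cancellation (Gompf's standard pictures, possibly after a preliminary Legendrian isotopy to ``standardize'' $\Lambda_0$ near the handle) so that the cancellation decomposes into moves whose effect on rulings is already covered by (i) and (ii); if a residual case resists, I would fall back on the augmentation picture — existence of a normal ruling is detected by an augmentation of the Chekanov–Eliashberg DGA, and functoriality of the DGA/augmentation varieties under the relevant Weinstein cobordism maps could be invoked as an alternative route, at the cost of losing the explicit combinatorial ruling and having to argue that the resulting augmentation is ``normal-ruling-type.'' A secondary, more bookkeeping-level obstacle is setting up normal rulings for fronts in $\#^m S^1 \times S^2$ cleanly enough that the concatenation goes through; I expect this to be routine but tedious.
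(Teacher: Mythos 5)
Your overall strategy is the same as the paper's: run the ruling through the presentation of \cref{thm:cob_diag}, first across the pinch moves from $\Lambda_-$ to $\Lambda_0$ (where the decomposable case applies), then through the Legendrian Kirby calculus that trivializes the Weinstein structure. However, the step you flag as ``where the real work lies'' --- item (iii), passing $\Lambda_0$ over a canceling $1$-/$2$-handle pair --- is exactly the content of the theorem, and your proposal leaves it unresolved. Your hope that the cancellation ``decomposes into moves whose effect on rulings is already covered by (i) and (ii)'' does not pan out: sliding the $2k$ strands of $\Lambda_0$ that run through a $1$-handle over the $(-1)$-framed canceling $2$-handle replaces those strands by a contact-framed $2k$-copy (a $2k$-strand satellite) of the attaching sphere $\Lambda$, glued to cusp arcs near the belt sphere. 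This is neither a Legendrian isotopy nor a pinch move, and producing a normal ruling of the resulting front is a genuinely new combinatorial problem. The paper solves it by invoking Ng and Rutherford's results on normal rulings of satellites: the ruling of the $\Lambda_0$-strands through the $1$-handle defines a fixed-point-free involution $\rho_P$ on the $2k$-strand parallel pattern, \cite[Theorem 3.6]{ng2013satsrulings} then rules the $2k$-copy of $\Lambda$ away from a ball around the belt sphere, and their cusp lemma lets one glue this to the ruling of the cusp arcs inside the ball.

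Two further ingredients are missing from your outline and are needed to make even that argument run. First, one must observe that the ruling induced on $\Lambda_0$ has the \emph{companion-path property}: whenever a path of the ruling crosses a $1$-handle, its companion path does too, and this property is preserved by $1$-handle slides and by slides of $\Lambda_0$ across $1$-handles. Without it the involution $\rho_P$ is not defined and the satellite argument cannot start. Second, one needs a normalization step (achieved by further slides over the $1$-handles) putting the canceling $2$-handle strand at the top of each belt sphere with the even number of $\Lambda_0$-strands below, all meeting along a single Reeb chord, so that the local model for the repeated handle slide is the clean $2k$-copy picture. Your proposed fallback to augmentation functoriality is also not a substitute: it could at best give \emph{existence} of a normal ruling of $\Lambda_+$ (and would do so for any exact cobordism, so it would not be using regularity), whereas the theorem and \cref{cor:normal_ruling_filling} require an explicit induced ruling coming from the cobordism.
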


\begin{corollary}\label{cor:normal_ruling_filling}
Counts of normal rulings are nondecreasing under the regular Lagrangian cobordism relation.    
\end{corollary}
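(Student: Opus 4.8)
The plan is to deduce \cref{cor:normal_ruling_filling} directly from \cref{thm:main_ruling} together with the Legendrian-isotopy invariance of ruling counts. First I would recall precisely what the statement asserts: writing $R(\Lambda)$ for the number of normal rulings of a Legendrian front for $\Lambda$ (a well-defined nonnegative integer by \cite{chekanov2007pushkar}), and more refined the ruling polynomial $R_\Lambda(z)$ whose coefficients count rulings weighted by $(\text{switches}) - (\text{returns})$ or by $-\chi$ of the ruling; the claim is that if $\Lambda_- \prec_{\mathrm{reg}} \Lambda_+$ then $R(\Lambda_-) \le R(\Lambda_+)$, and, if one wants the polynomial-graded version, that the appropriate coefficients are nondecreasing. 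I would state it in the form that matters: $\Lambda_- \prec_{\mathrm{reg}} \Lambda_+$ implies $R(\Lambda_-) \le R(\Lambda_+)$.

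The key step is to show that the map on rulings produced by \cref{thm:main_ruling} is \emph{injective}. \cref{thm:main_ruling} says each normal ruling $\rho$ of $\Lambda_-$ gives rise to an induced normal ruling $\Phi_L(\rho)$ of $\Lambda_+$; the corollary follows once $\Phi_L$ is injective, since then $R(\Lambda_-) = |\{\rho\}| \le |\{\Phi_L(\rho)\}| \le R(\Lambda_+)$. To see injectivity I would examine the construction underlying \cref{thm:main_ruling}: by \cref{thm:cob_diag} the regular cobordism is presented as a sequence of moves — Legendrian isotopies, births, pinches, and the Weinstein $1$- and $2$-handle attachments/cancellations — and one tracks the ruling through each move. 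For Legendrian isotopy the induced map on rulings is a bijection (invariance of the ruling set up to the combinatorial moves of \cite{chekanov2007pushkar}); for a birth the new ruling restricts back to the old one, which recovers $\rho$; for a pinch/saddle move through a $1$-handle the local model is the standard one (as in \cite{cornwell2016concordance}), where the induced ruling on the target determines the source ruling near the pinch region and is unchanged away from it; and the handle cancellations are again realized by Legendrian Kirby moves, hence bijective on rulings. Composing, $\Phi_L$ is a composite of injections, hence injective.

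For the graded refinement I would note that each elementary move changes the Euler-characteristic grading of the ruling in a controlled way: Legendrian isotopy preserves it, a birth adds a disjoint max-tb unknot (shifting $-\chi$ by a fixed amount matching the change in the cobordism's topology), and a pinch changes $-\chi$ by one in the predictable direction, exactly as in the decomposable case. Since regular cobordisms built per \cref{thm:cob_diag} differ from decomposable ones only by handle attachments disjoint from the link — which contribute no births or pinches and act on the knot by Legendrian isotopy in the surgered manifold — the bookkeeping is identical to the decomposable setting, where the statement is classical. I would therefore phrase the proof as: ``By \cref{thm:main_ruling} the cobordism induces a map on normal rulings; by inspecting the local models in its construction (\cref{thm:cob_diag}) this map is injective and compatible with the $-\chi$ grading, giving the asserted inequality on (graded) ruling counts.''

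The main obstacle I anticipate is verifying injectivity cleanly — in particular checking that the pinch/saddle local model really does allow one to reconstruct the incoming ruling from the outgoing one (so that no two distinct rulings of $\Lambda_-$ are identified), and confirming that the $2$-handle attachments and handle cancellations, which act on the link only through Legendrian isotopy in the intermediate contact manifolds, do not collapse distinct rulings. If the local pinch model is not literally injective on rulings but only finite-to-one, the clean inequality would need to be replaced by a statement about weighted counts; I expect, however, that the standard pinch model from \cite{ekholm2012exactcobordisms,cornwell2016concordance} is injective on the ruling set, so the argument goes through as stated.
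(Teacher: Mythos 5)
Your argument matches the paper's: there the corollary is stated as following immediately from \cref{thm:main_ruling}, the implicit content being exactly the injectivity of the induced map on rulings that you spell out (each local model --- isotopy, birth, pinch through a $1$-handle, and the $2$-handle slides/cancellations --- lets one recover the incoming ruling from the outgoing one, since the switch set away from the modification region is unchanged). The proposal is correct and takes essentially the same route; the extra material on graded/polynomial refinements is beyond what the corollary asserts and is not needed.
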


\begin{remark}\label{remark:dec_fil_norm}
Call a normal ruling of $\Lambda$ \textit{decomposably fillable} if it is the canonical normal ruling of $\Lambda$ associated to a decomposable filling \cite{chen2024nonorientable}. From top to bottom, a normal ruling is decomposably fillable if paired colors can be successively pinched down to a max-tb unlink. There are some known necessary conditions for a normal ruling to be decomposably fillable \cite{chen2024nonorientable}, but sufficient conditions are unknown. \cref{conj:slice} and \cref{con:fill} can be rephrased by asking if the normal ruling furnished by applying \cref{thm:main_ruling} to a regular filling is decomposably fillable.   
\end{remark}

\subsection{Organization}

In \cref{sec:gradations} we discuss various gradations of the symplectic and smooth slice-ribbon conjecture. In \cref{sec:background} we provide the relevant background on exact, regular, and decomposable Lagrangian cobordisms, Weinstein topology, and normal rulings. In \cref{sec:slice} we prove \cref{thm:main3} regarding once-stable decomposable sliceness, in \cref{sec:diag} the diagrammatic results stated in \cref{subsec:diag}, and in \cref{sec:sat} the statements on satellites and normal rulings from \cref{subsec:sat} and \cref{subsec:rulings}. Finally, in \cref{sec:nec} we discuss a relevant example toward a solution to \cref{conj:slice} and construct a class of decomposably slice knots that may not be strongly decomposably slice.

\cref{sec:gradations} (on gradations of the slice-ribbon conjecture) and \cref{sec:nec} (on regular, decomposable, and strongly decomposable sliceness) may, for the most part, be read independently of the rest of the paper.

\begin{conventions}
Legendrians bounding cobordisms are drawn in blue, and never represent loci for surgeries or $2$-handle attaching spheres; the latter are typically black and always have explicit surgery coefficients. Smooth framing coefficients in Kirby diagrams are typeset without parentheses, while contact framing coefficients are enclosed in parentheses: $(\pm 1) = \mathrm{tb} \pm 1$. Legendrian surgery arcs are dotted and red.
\end{conventions}

\begin{ack}
We thank John Etnyre and Josh Sabloff for symplectic conversations, correspondence, and comments, and Alex Zupan for an insightful smooth discussion. Georgios Dimitroglou Rizell also provided helpful remarks and questions via email correspondence. 
\end{ack}

\begin{funding}
This work was partially supported by NSF Grant DMS-2038103 and an AMS-Simons Travel Grant.
\end{funding}

\section{Gradations of the slice-ribbon conjecture}\label{sec:gradations}

The goal of this section is to give additional context to our main results by discussing gradations of the slice-ribbon conjecture in the smooth and symplectic categories. Specifically, we explain the diagram in \cref{fig:SRgradations}. Such an explicit comparison appears lacking from the literature. 

\begin{figure}[ht]
    \centering
    \begin{tikzcd}
    |[yshift=-0.5em,overlay]|\text{\underline{Symplectic category}} & |[yshift=-0.5em,overlay]|\text{\underline{Smooth category}}\\
 \{\text{Lagrangian slice}\}\arrow[dddddr,darkred,bend right = 0,Rightarrow,in=220,out=50]\arrow[r,leftrightsquigarrow,crossing over]& \{\text{slice}\}\\
\{\text{Lagrangian homotopy-ribbon}\} \arrow[r,leftrightsquigarrow,crossing over]\arrow[u,darkgreen,Leftrightarrow] & \{\text{homotopy-ribbon}\}\arrow[u,Rightarrow]\\
 \{\text{weakly regular}\} \arrow[r,leftrightsquigarrow,crossing over]\arrow[u,Rightarrow] & \{\text{handle-ribbon}\}\arrow[u,Rightarrow]\\
 \{\text{regular}\} \arrow[r,leftrightsquigarrow,crossing over]\arrow[u,darkgreen,Leftrightarrow] & \{\text{AC-trivial handle-ribbon}\}\arrow[u,Rightarrow]\\
\{\text{decomposable}\} \arrow[r,leftrightsquigarrow,crossing over]\arrow[u,Rightarrow] & \{\text{ribbon}\}\arrow[u,darkgreen,Leftrightarrow] \\
\{\text{strongly decomposable}\} \arrow[r,leftrightsquigarrow]\arrow[u,Rightarrow] & \{\text{strongly ribbon}\}\arrow[u,darkgreen,Leftrightarrow]
\end{tikzcd}    
    \caption{Classes of Legendrian knots on the left, and their smooth analogues on the right. Reversibility of any of the black unidirectional implications in the diagram is unknown.}
    \label{fig:SRgradations}
\end{figure}
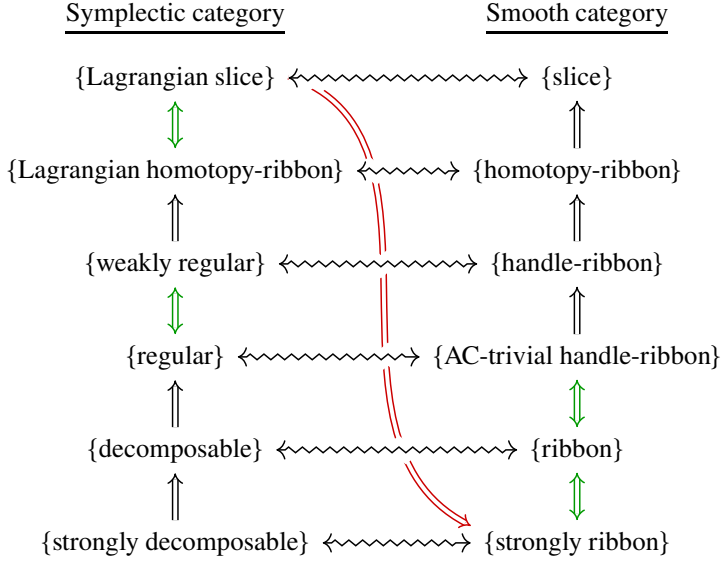

In the diagram are a number of novel terms: \textit{Lagrangian homotopy-ribbon}, 
\textit{weakly regular}, \textit{AC-trivial handle-ribbon}, \textit{strongly decomposable}, and \textit{strongly ribbon}. These are defined below only to put the symplectic and smooth gradations in one-to-one correspondence, and to clarify that the lower rungs of smooth slice-ribbon and symplectic slice-ribbon are difficult for different reasons. 

\begin{remark}
In the smooth category we consider knots bounding disks in $B^4$. However, by removing a small interior $4$-ball, it is equivalent to work within $[0,1]\times S^3$. Likewise, it is equivalent to consider Lagrangian fillings in the symplectization $\R \times S^3$ or in the completion of the standard Weinstein structure on $B^4$.     
\end{remark}

\subsection{Smooth gradations}\label{subsec:grad_smooth}
A smooth knot $K\subset S^3$ is \textit{slice} if there is a smoothly and properly embedded disk $D\subset B^4$, called a \textit{slice disk}, such that $K = \partial D \subset \partial B^4$. If the radial function $B^4 \to [0,1]$ restricts to a Morse function on $D$ without critical points of index $2$, we call $K$ \textit{ribbon} and $D$ a \textit{ribbon disk}. Fox's famously open slice-ribbon conjecture \cite{fox1962knot} posits that every slice knot is ribbon. 

Multiple intermediary classes have been studied in the literature \cite{larson2015ribbondisks,hom2021ribbon,miller2021homotopyribbon,miller2023handleribbon}. Gordon \cite{gordon1981ribbon} showed that every ribbon knot $K$ is \textit{handle-ribbon},\footnote{The language \textit{strongly homotopy-ribbon} is also used in the literature. Here we follow the conventions of Miller and Zupan \cite[Remark 2.2]{miller2023handleribbon} and refer to their paper for more discussion on nomenclature.} which means that $K$ bounds a slice disk $D$ whose complement $B^4 - N(D)$ admits a handle decomposition without $3$-handles; such a handle decomposition is a \textit{ribbon handle decomposition} and $D$ is a \textit{handle-ribbon disk}. Handle-ribbonness in turn implies that $K$ is \textit{homotopy-ribbon}, which means that the inclusion of the knot complement into the disk complement induces a surjection
$\pi_1(S^3 - K)  \,\twoheadrightarrow\, \pi_1(B^4 - D)$. Whether or not slice implies homotopy-ribbon, homotopy-ribbon implies handle-ribbon, or handle-ribbon implies ribbon is unknown. 

We name two additional classes, known to be equivalent to ribbonness, for the purpose of clarifying the analogy with the symplectic category. Starting from the strongest condition, we say that a slice knot is \textit{strongly ribbon} if it admits a disk-band surgery presentation in which all band surgeries commute; i.e., if the order of index $1$ critical points of the Morse function on $D$ can be freely shuffled. That ribbonness implies strong ribbonness is straightforward (see e.g. \cite[Lemma 3.2]{etnyre2024lagrangian}).

Next, we say that a knot $K$ is \textit{AC-trivial handle-ribbon} if it admits a handle-ribbon disk $D$ whose induced ribbon handle decomposition of $B^4$ is \textit{Andrews-Curtis trivial}. Here, the induced handle decomposition of $B^4$ is obtained by viewing $D$ as the co-core of a $2$-handle attached to the ribbon handle decomposition of its complement. To say that this handle decomposition is Andrews-Curtis (AC) trivial means that it can be transformed into the single $0$-handle decomposition by births, deaths, and slides that never introduce $3$-handles. The ability to do so is not necessarily guaranteed, and the widely-believed falsity of the Andrews-Curtis conjecture \cite{andrews1965curtis} on balanced presentations of the trivial group is an obstruction. For example, the handlebodies in \cref{fig:gompf} are all diffeomorphic to $B^4$, but they are not known to be AC-trivial for $n\geq 3$. 

\begin{figure}[ht]
	\begin{overpic}[scale=.3]{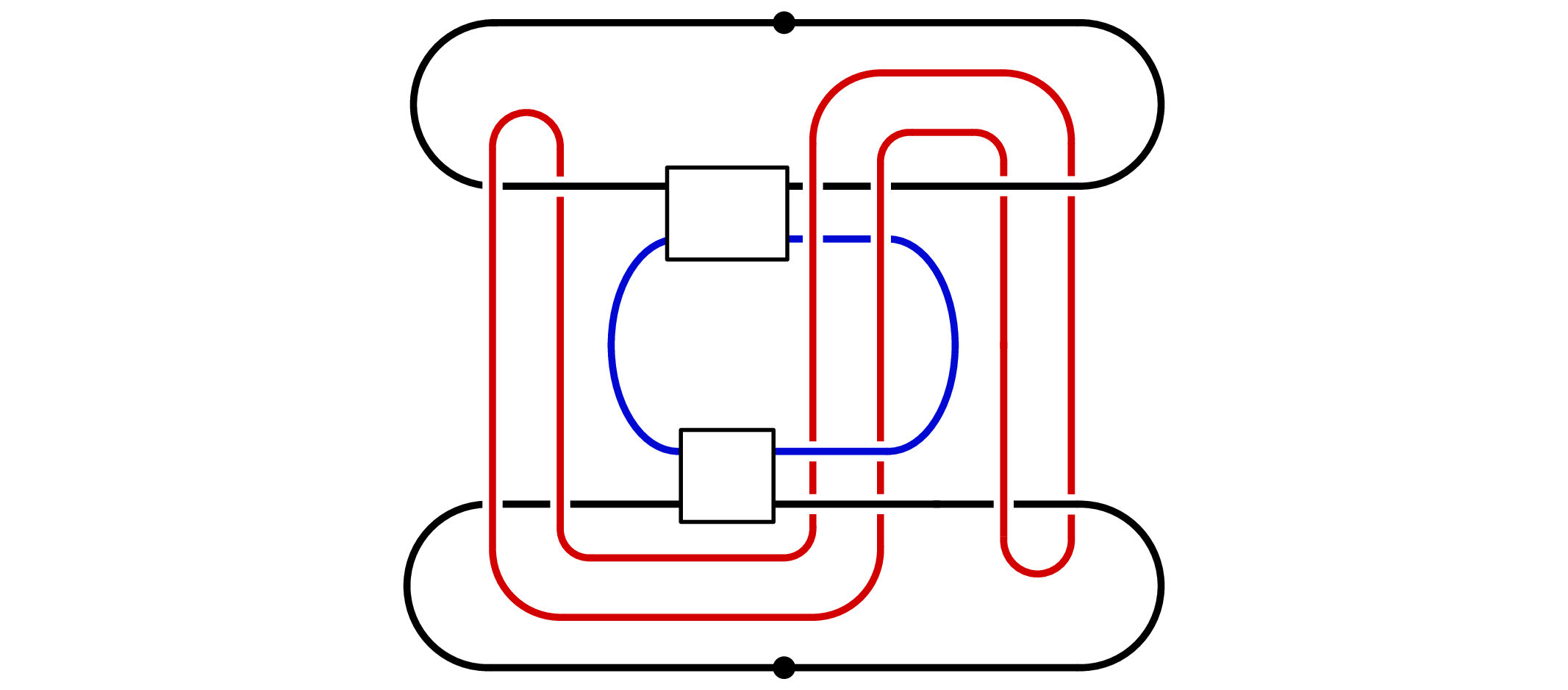}   
        \put(28,21){\small \textcolor{darkred}{$0$}}
        \put(40.75,21){\small \textcolor{darkblue}{$-1$}}
        \put(45.75,12.95){\scriptsize $n$}
        \put(43.1,29.9){\scriptsize $-\mkern-1mu 1\mkern-2mu-\mkern-2mu n$}
	\end{overpic}
	\caption{Handlebody presentations of $B^4$ that are not known to be AC-trivial for $n\geq 3$ \cite{gompf1991killingtheakbulut}. A $k$-box indicates $k$ full right-handed twists.}
	\label{fig:gompf}
\end{figure}

These handlebodies and their generalizations have a long history, dating back to Cappell and Shaneson \cite{cappell1976newfourmanifolds} and Akbulut and Kirby \cite{akbulut1985potential}, who studied their closures as potential counterexamples to the smooth $4$-dimensional Poincaré conjecture. Gompf \cite{gompf1991killingtheakbulut} eventually showed that the handlebodies in \cref{fig:gompf} are in fact standard, but needed $3$-handles to do so; much later, Akbulut \cite{akbulut2009cappell} and subsequently Gompf \cite{gompf2010morecappell} standardized even more Cappell-Shaneson families. Although they have failed to break Poincaré, $\pi_1$-presentations of Cappell-Shaneson homotopy balls remain well-known potential counterexamples to the Andrews-Curtis conjecture. 

In general, the slice-ribbon, Poincaré, Andrews-Curtis, and generalized Property R conjectures \cite{gompf2010property2R} form a tightly woven tapestry of open problems in low-dimensional topology that are all relevant to our discussion; see \cite{freedman2010manandmachine} for a survey. For instance, if $K$ is handle-ribbon, it is a component of an \textit{$R$-link}, i.e., an $n$-component link in $S^3$ on which $0$-surgery produces $\#^n S^1 \times S^2$. Indeed, the belt spheres of all $2$-handles of a ribbon handle decomposition of $B^4$ form an $R$-link, as $0$-surgery is witnessed by removing each co-core disk, leaving $\natural^n S^1\times B^3$ with boundary $\#^n S^1 \times S^2$. The generalized Property R conjecture (GPRC), a multi-component generalization of Gabai's famous Property R theorem for knots \cite{gabai1987propertyR}, asserts that every $R$-link is handle slide equivalent to a $0$-framed unlink. As $0$-framed handle slides preserve ribbonness, any component of a link satisfying GPRC is ribbon. Consequently, GPRC would imply equivalence of handle-ribbonness and ribbonness. 

By flipping a ribbon handle decomposition of $B^4$ upside down, we obtain a $2$-/$3$-/$4$-handle decomposition of a standard $B^4$-cap cobordism of $S^3$ with $2$-handles attached along a $0$-framed $R$-link. If the original ribbon handle decomposition is AC-trivial, then the $R$-link at the base of the upside-down cobordism is handle slide equivalent to a $0$-framed unlink, as the $2$-handles are handle slide-cancelable by the $3$-handles. Consequently, the above discussion implies:

\begin{proposition}\label{prop:smooth}
If a knot is AC-trivial handle-ribbon, then it is ribbon.     
\end{proposition}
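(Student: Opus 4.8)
The plan is to make rigorous the "flip upside down" argument already sketched in the paragraph preceding the statement. Suppose $K$ is AC-trivial handle-ribbon, witnessed by a handle-ribbon disk $D\subset B^4$ whose complement $W := B^4 - N(D)$ carries a handle decomposition $\mathcal{H}$ with no $3$-handles (hence handles only of index $0,1,2$), and whose induced handle decomposition $\mathcal{H}'$ of $B^4$ — obtained by attaching a single $2$-handle to $W$ with co-core $D$ — is Andrews-Curtis trivial, i.e. connected to the standard $0$-handle decomposition of $B^4$ by births, deaths and handle slides that never create a $3$-handle. I want to conclude that $K$ is ribbon.

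First I would set up the upside-down picture. Turning $\mathcal{H}'$ (a $0$-/$1$-/$2$-handle decomposition of $B^4$) upside down yields a handle decomposition of $B^4$ built on the $S^3$ end: a collar $[0,1]\times S^3$ with $2$-handles attached along a framed link $R\subset S^3$, followed by $3$-handles and a single $4$-handle. The dual of the distinguished $2$-handle of $\mathcal{H}'$ (the one with co-core $D$) is exactly the $2$-handle whose \emph{core} disk, pushed into $B^4$, is $D$ up to isotopy — this is the standard duality between a $2$-handle's core and co-core. The belt spheres of the $2$-handles of $\mathcal{H}$ together with the attaching circle of the distinguished $2$-handle form the link $R$, and because deleting the $N(D)$ tube from $B^4$ leaves $\natural^n S^1\times B^3$, the link $R$ is a $0$-framed $R$-link: $0$-surgery on $R$ gives $\#^n S^1\times S^2$. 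Second, and this is where AC-triviality is used, I track what the handle moves of the Andrews-Curtis trivialization of $\mathcal{H}'$ do to the upside-down picture. Births/deaths of cancelling $0$-/$1$-handle (resp. $3$-/$4$-handle) pairs and handle slides among $1$- and $2$-handles of $\mathcal{H}'$ dualize to births/deaths and handle slides among the $2$- and $3$-handles of the upside-down decomposition; crucially, never creating a $3$-handle in $\mathcal{H}'$ means never creating a $1$-handle upside down, so the moves stay within the realm of $0$-framed $2$-handle slides (slides of $2$-handles over $2$-handles and over $3$-handles) and $2$-/$3$-handle cancellations. Since AC-triviality ends at the standard $0$-handle decomposition of $B^4$, upside down we end at the trivial cobordism $[0,1]\times S^3$, i.e. the $R$-link $R$ has been reduced to the empty link (equivalently a $0$-framed unlink with all components cancelled by $3$-handles) through $0$-framed handle slides alone.

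Third, I transport this trivialization back to $D$. The key observation, which I would state as a lemma, is that a $0$-framed handle slide in the $2$-handle portion of the cobordism can be realized by an isotopy of the slice disk $D$ (viewed as the co-core of the distinguished $2$-handle) together with a corresponding modification of $W$'s handle structure, and — this is the point — such a slide does not increase the index of the Morse function on $D$: it either leaves $D$ a ribbon disk or, at worst, is realized by band moves, births, and deaths of bands, all of index $\leq 1$. Because GPRC-style $0$-framed handle slides preserve ribbonness (this is exactly the fact invoked in the paragraph: "$0$-framed handle slides preserve ribbonness"), and because the distinguished $2$-handle whose co-core is $D$ can be slid to sit over a standard $0$-framed meridian once the ambient $R$-link is trivialized, we obtain a ribbon disk for $K$: in the trivialized picture $K$ bounds the obvious flat ribbon disk (the co-core of a $2$-handle attached along a $0$-framed unknot unlinked from everything), and pulling the isotopy back through the AC-trivialization produces a Morse function on a disk bounding $K$ with no index-$2$ critical points. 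Hence $K$ is ribbon.

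The main obstacle — and the step requiring the most care — is the bookkeeping in the third paragraph: making precise that the Andrews-Curtis handle moves, when dualized, act on the pair $(W,D)$ by operations that preserve the ribbon (index $\leq 1$) property of $D$, and in particular that the distinguished $2$-handle is carried along consistently as these moves are performed. One must be careful that a handle slide \emph{of another $2$-handle over the distinguished one} corresponds to a band-sum operation on $D$ that remains within ribbon moves, and that slides \emph{of the distinguished $2$-handle over others} are isotopies of $D$ rel boundary; both are standard in Kirby calculus but need to be spelled out in the slice-disk setting. Everything else is a routine application of handle duality and the already-cited fact that $0$-framed handle slides preserve ribbonness, together with the defining feature of AC-triviality that no $3$-handles (dually, $1$-handles) ever appear.
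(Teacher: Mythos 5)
Your argument is correct in its essentials, but it is not the route taken by the paper's formal proof: what you have written out is the ``upside-down'' argument that the paper sketches in the discussion immediately preceding \cref{prop:smooth} (AC-triviality forces the $R$-link of $2$-handle belt spheres, of which $K$ is a component, to be $0$-framed handle slide equivalent to an unlink, and $0$-framed handle slides preserve ribbonness). The proof the paper actually gives is a different, ``right-side-up'' argument: it keeps the $1$-/$2$-handle decomposition of $B^4$, realizes $K$ as a meridian of the distinguished $2$-handle, carries $K$ along while the diagram is brought to geometrically canceling position, performs an ``illegal isotopy'' of $K$ through the fixed $2$-handle attaching spheres whose trace is recorded by meridians and band surgeries, and then slides those bands over the canceling $2$-handles to leave an explicit disk-band presentation of $K$. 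Your approach buys brevity by outsourcing the main step to the known fact that $0$-framed slides preserve ribbonness; the paper's approach buys an explicit ribbon presentation and, more importantly for the paper, a template that upgrades to the symplectic setting --- the band-trace maneuver becomes \cref{lemma:stabsequence}, which is exactly where the single stabilization in \cref{thm:main3} enters.

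Two points to repair. First, your justification that $R$ is a $0$-framed $R$-link asserts that $B^4 - N(D) \cong \natural^n S^1\times B^3$; this is false in general (a handle-ribbon disk complement need not have free fundamental group). The correct justification is that removing neighborhoods of the co-cores of \emph{all} the $2$-handles of $\mathcal{H}'$, not just $D$, leaves the union of the $0$- and $1$-handles, which is $\natural^n S^1\times B^3$ with boundary $\#^n S^1\times S^2$; the conclusion you need is unaffected. Second, the third paragraph is more complicated than necessary and slightly misdirected: you do not need to recover the original disk $D$ or realize the slides as isotopies of $D$ --- any ribbon disk for $K$ suffices. The clean statement of the invariant is that all components of the $0$-framed unlink bound disjoint ribbon disks, and that this property of a framed link is preserved under $0$-framed handle slides (run in either direction) and under births of split unknotted components; reversing the dualized AC-trivialization then endows every component of $R$, in particular $K$, with a ribbon disk.
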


Though \cref{prop:smooth} follows from the above discussion, itself a reformulation of the discussions in \cite{gompf2010property2R,freedman2010manandmachine}, the author is unaware of an explicit statement in the literature. For more context, we provide an additional right-side-up proof, analogous to our symplectic proof of \cref{thm:main3} but somewhat more informal, in deference to the more careful treatment in \cref{sec:slice}.

\begin{remark}
    The upward implication from ribbon to AC-trivial handle-ribbon follows from the fact that, by definition, inclusion of a ribbon disk back into its complement yields the radial function on $B^4$.
\end{remark}

\begin{proof}[Proof of \cref{prop:smooth}.]
Let $K\subset S^3$ be AC-trivial handle-ribbon. Then $K$ is the belt sphere of a $2$-handle of an AC-trivial ribbon handle decomposition of $B^4$. Consequently, $K$ is isotopic to a small meridian curve of a $2$-handle attaching sphere in an AC-trivial Kirby diagram for $B^4$ involving only $1$-handles and $2$-handles. 

Ignoring $K$, by AC-triviality, we may perform isotopies, $1$-/$2$-births, and handle slides to transform the Kirby diagram into one in geometrically canceling position, intentionally delaying all cancellations. Some observations about this process:
\begin{enumerate}
    \item Viewing the $1$-handles as dotted circles, $K$ is unlinked from the initial $1$-handle configuration. handle slides of $1$-handles preserve this property, as do $1$-/$2$-handle-births.   
    \item handle slides of $2$-handles are performed along bands and can generically be done in the complement of $K$ (of course, possibly linking with $K$).
\end{enumerate}
By these observations and isotopy extension, we produce a Kirby diagram for $B^4$ consisting of a collection of geometrically canceling $1$-/$2$-handle pairs, in which $K$ is drawn as a knot which, upon erasing all $2$-handles, is the unknot (with a possibly complicated diagram) in $\#^n S^1\times S^2$.

We wish to perform the $1$-/$2$-handle cancellations, but $K$ may intersect the belt spheres of the $1$-handles. Since $K$ is an unknot in $\#^n S^1\times S^2$, there is an ``illegal isotopy'' of $K$ to a small unknot, disjoint from the belt spheres, involving passage through the fixed $2$-handle attaching spheres; see the left two panels of \cref{fig:bandint} for the local intersection model.

\begin{figure}[ht]
	\begin{overpic}[scale=.346]{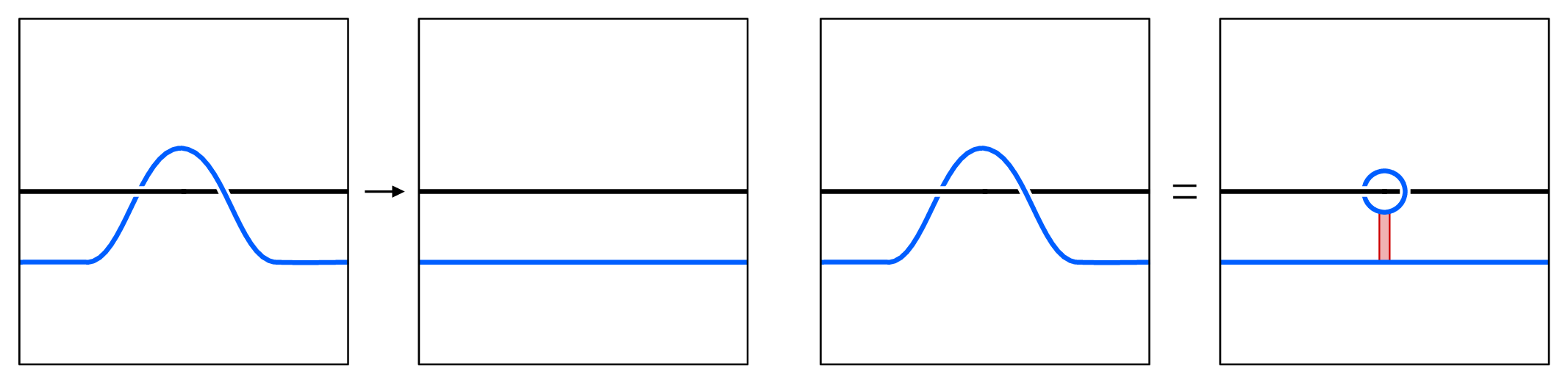}   
        \put(3,13){\footnotesize $n$}
        \put(28.5,13){\footnotesize $n$}
        \put(54,13){\footnotesize $n$}
        \put(80,13){\footnotesize $n$}

        \put(2.5,4.5){\footnotesize \textcolor{lightblue}{$K$}}
        \put(53.5,4.5){\footnotesize \textcolor{lightblue}{$K$}}
	\end{overpic}
	\caption{The left two panels give a local model for $2$-handle intersections in the illegal isotopy of $K$. The right two panels give a band-surgery presentation of $K$ with a meridian and a component that has ``passed through'' the intersection.}
	\label{fig:bandint}
\end{figure}

We account for the intersections with $2$-handle attaching spheres by recording the ``trace'' of the illegal isotopy through the intersection with a meridian curve and band surgery; see the right side of \cref{fig:bandint}. By applying this local model to each $2$-sphere intersection, and tracing the isotopy with the bands, we may present $K$ via band surgeries in the geometrically canceling diagram on a link which does not cross the $1$-handles. In the concrete example of \cref{fig:bandslide}, this is the status of the third panel. 

\begin{figure}[hbt]
	\begin{overpic}[scale=.345]{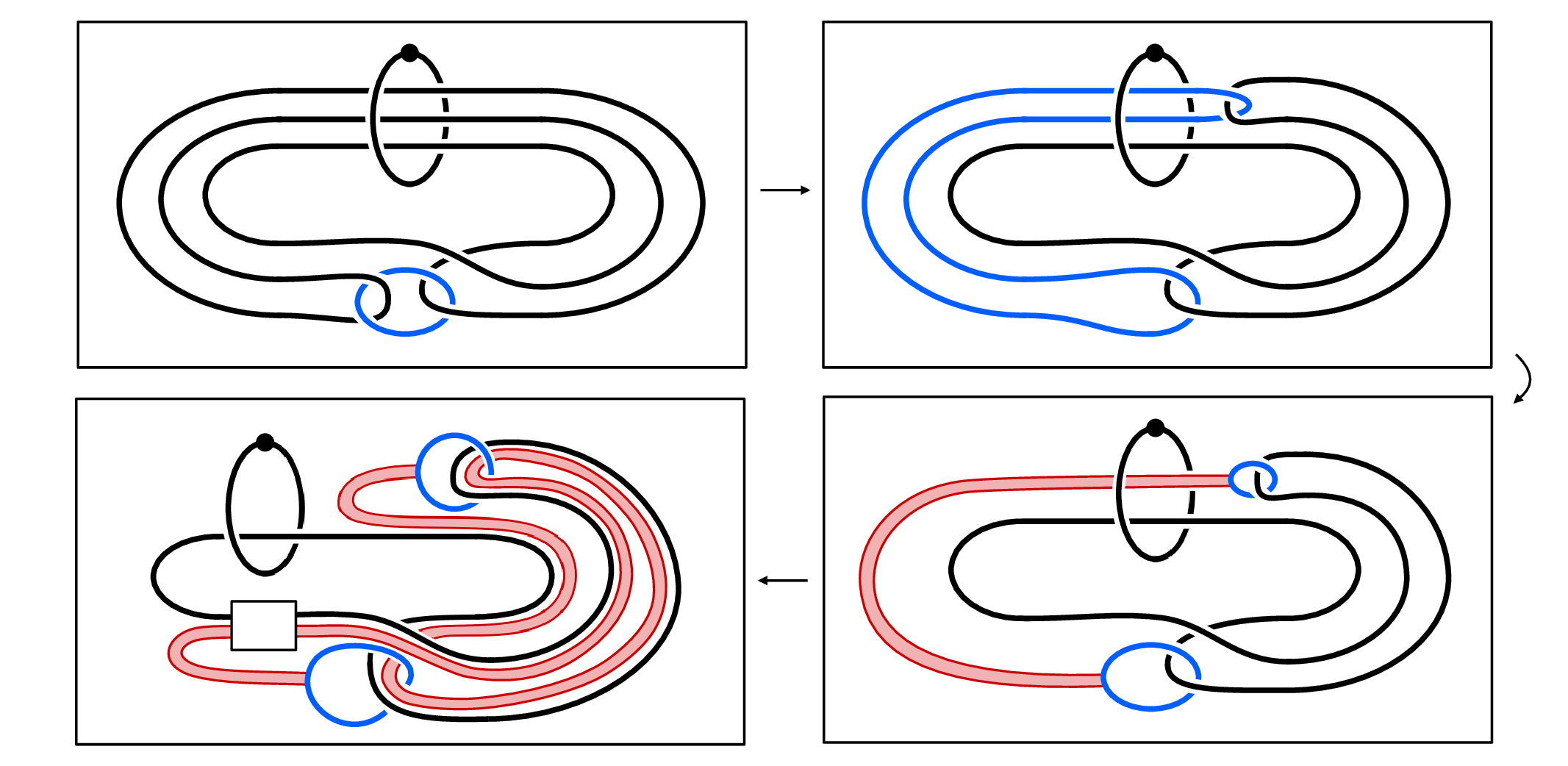}   
        \put(43,29){\footnotesize $n$}
        \put(42,4.5){\footnotesize $n$}
        \put(16.25,8.35){\footnotesize $n$}
        \put(91,29){\footnotesize $n$}
         \put(91,5){\footnotesize $n$}
       
        \put(28,26){\scriptsize \textcolor{lightblue}{$K$}}
        \put(76,26){\scriptsize \textcolor{lightblue}{$K$}}
     
	\end{overpic}
	\caption{In the first panel, the $1$-/$2$-handle pair is in algebraically canceling position. After an isotopy that drags $K$ through the $1$-handle, the handles are in geometrically canceling position. Tracing the illegal isotopy of $K$ back to a small unknot with a band surgery yields the third panel. In the fourth panel, we slide the band across the $2$-handle. The handles may subsequently be canceled, leaving a disk-band presentation of $K$.}
	\label{fig:bandslide}
\end{figure}

To perform the handle cancellations, it suffices to handle slide any bands crossing $1$-handles across the corresponding canceling $2$-handles. After a sufficient number of such band-slides, the handles in the diagram may be erased. What remains is a disk-band presentation of $K$, implying that $K$ is ribbon. 
\end{proof}

\begin{remark}
In the above proof, AC-triviality prevents $2$-/$3$-handle births. The problem this poses in terms of concluding ribbonness is that the birthed $0$-framed $2$-handle attaching sphere could link with $K$. It is then unclear how to move $K$ out of the way in order to ultimately cancel the handles as is done with the $1$-/$2$-pairs. In particular, canceling such a $2$-/$3$-pair induces a maximum in the slice disk with respect to the radial function. 
\end{remark}

\subsection{Symplectic gradations}\label{subsec:sympelctic_grads}

In this subsection only, we use slice-like adjectives to refer to Legendrian knot types, analogous to the practice in smooth topology. Aside from \textit{Lagrangian slice}, the author is unaware of such usage in the symplectic literature. Afterwards, we will revert to saying e.g. \textit{regularly slice} instead of simply \textit{regular}. 

The following definition collects all of the symplectic terms in \cref{fig:SRgradations}.

\begin{definition}
A Legendrian knot type $\Lambda \subset (S^3, \xi_{\mathrm{st}})$ is:
\begin{enumerate}
    \item \textbf{Lagrangian slice} if it admits a Lagrangian disk filling $D\subset \R \times S^3$ in the symplectization.
    \item \textbf{Lagrangian homotopy-ribbon} if it is Lagrangian slice and its underlying smooth knot type is homotopy-ribbon. 
    \item \textbf{Weakly regular} if it bounds the co-core disk of a $2$-handle in a finite-type Weinstein structure on $\R\times S^3$ symplectomorphic to the symplectization. 
    \item \textbf{Regular} if it bounds the co-core disk of a $2$-handle in a finite-type Weinstein structure on $\R\times S^3$ Weinstein homotopic to the symplectization. 
    \item \textbf{Decomposable} if it admits a decomposable disk filling $D\subset \R \times S^3$ in the symplectization. 
    \item \textbf{Strongly decomposable} if it admits a decomposable disk filling $D\subset \R \times S^3$ in the symplectization in which all Legendrian surgeries commute, i.e., if there is a max-tb unlink $\tilde{U}$ and a set of embedded surgery arcs $G$ such that $\Lambda = \mathrm{Surg}(\tilde{U}, G)$; see the discussion after \cref{def:dec_moves}.
\end{enumerate}
\end{definition}

Strongly decomposable cobordisms are those given by the \textit{Legendrian handle graph} framework of \cite{sabloff2021upper}. We do not know if decomposable implies strongly decomposable; see \cite[Remark 2.7]{sabloff2021upper}. Locally, the reason is given by \cref{fig:strongdec}, which is \cite[Figure 7]{sabloff2021upper}. In \cref{subsec:decvsstrong} we provide a recipe (see \cref{prop:slicesats}) for producing candidate decomposably slice knots that may potentially not be strongly decomposably slice; the Legendrian $\Delta_2^{3}$-satellite of $\overline{9_{46}}$ is an example. 

\begin{figure}[ht]
	\begin{overpic}[scale=.28]{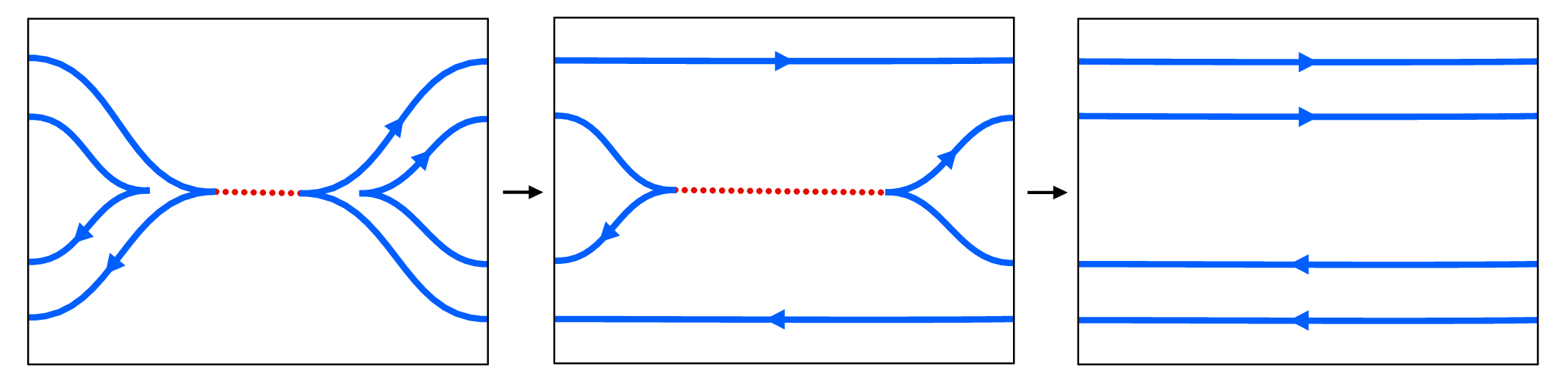}   
        
	\end{overpic}
	\caption{A sequence of two Legendrian surgeries where it is unclear if the second can be performed before the first. (Locally, the order \textit{cannot} be reversed; if commutation is possible, it is for a semi-local or global reason.)}
	\label{fig:strongdec}
\end{figure}

Observe the difference between weak regularity and regularity. Weinstein homotopy implies (exact) symplectomorphism, but we do not know in general if symplectomorphic Weinstein structures on a fixed manifold are Weinstein homotopic \cite[Problem 1.4]{eliashberg2018weinsteinrevisited}. However, there is a unique Weinstein filling of $(S^3, \xi_{\mathrm{st}})$ up to \textit{deformation equivalence}, i.e., Weinstein homotopy after a self-diffeomorphism of $B^4$ \cite[Theorem 16.5]{cieliebak2012stein}. Here the problem is that we do not know if $\mathrm{Diff}_+(B^4, \partial B^4)$ is connected \cite{hatcher2012diffeogroups}, so the Weinstein structure furnished by weak regularity may be Weinstein homotopic to the symplectization only after a diffeomorphism not isotopic to the identity; see the discussion on \cite[p. 311]{cieliebak2012stein}. Nevertheless, any such diffeomorphism is relative to the boundary and does not affect the knot type $\Lambda$. We have thus proven: 

\begin{proposition}
If a Legendrian knot is is weakly regular, then it is regular.     
\end{proposition}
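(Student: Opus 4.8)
The plan is to deduce the proposition directly from the uniqueness of Weinstein fillings of $(S^3,\xi_{\mathrm{st}})$ up to deformation equivalence, exactly as flagged in the paragraph preceding the statement; no new geometry is needed, only care with boundary behavior and with which Legendrian the co-core bounds.

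First I would unwind the definitions. Suppose $\Lambda$ is weakly regular, witnessed by a finite-type Weinstein structure $\mathcal{W}$ on $\R\times S^3$ that is (exact) symplectomorphic to the symplectization and in which $\Lambda$ bounds the co-core disk $D$ of an index-$2$ Weinstein handle. Finite type means $\mathcal{W}=\widehat{W}$ is the completion of a Weinstein domain $W$; comparing ideal contact boundaries with that of $\widehat{B^4}$ identifies $\partial W$ with $(S^3,\xi_{\mathrm{st}})$, so $W$ is a Weinstein filling of $(S^3,\xi_{\mathrm{st}})$ and we may take $D\subset W$ to be the co-core of a critical handle with $\partial D=\Lambda$ sitting in a collar of the contact boundary.

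Next I would invoke \cite[Theorem 16.5]{cieliebak2012stein}: $W$ is deformation equivalent to the standard Weinstein ball, i.e.\ there is a diffeomorphism $\Phi\colon W\to B^4$ such that $\Phi_*\mathcal{W}$ is Weinstein homotopic to the standard structure on $B^4$, hence, after completion, Weinstein homotopic to the symplectization. After composing with a contactomorphism of $S^3$ extended over a collar, $\Phi$ can be arranged to be the identity near $\partial W$. The point emphasized in the text is that $\Phi$ need not be isotopic to the identity --- this is precisely the gap left open by the unresolved connectivity of $\mathrm{Diff}_+(B^4,\partial B^4)$ --- but this is harmless: $\Phi$ is supported away from the boundary, so $\Phi(\Lambda)=\Lambda$ as Legendrian submanifolds of $(S^3,\xi_{\mathrm{st}})$. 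Finally, since $\Phi$ carries $\mathcal{W}$ isomorphically to $\Phi_*\mathcal{W}$, it takes the Weinstein handle decomposition of $\mathcal{W}$ to one for $\Phi_*\mathcal{W}$; in particular $\Phi(D)$ is the co-core of an index-$2$ handle of a finite-type Weinstein structure on $\R\times S^3$ that is Weinstein homotopic to the symplectization, with boundary $\partial\Phi(D)=\Phi(\Lambda)=\Lambda$. Hence $\Lambda$ is regular.

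The only genuinely delicate step is the middle one: making the deformation-equivalence diffeomorphism boundary-fixing (so that it preserves the Legendrian knot type rather than merely its smooth isotopy class) and checking that it transports critical-handle co-cores to critical-handle co-cores. There is no substantive obstacle here --- which is exactly why the argument can be, and in the excerpt is, compressed to a one-paragraph remark --- so I expect the ``hard part'' to be purely expository: stating the completion/ideal-boundary bookkeeping cleanly enough that the appeal to \cite{cieliebak2012stein} is unambiguous.
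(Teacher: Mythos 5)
Your argument is the same as the paper's: both deduce the statement from the uniqueness of Weinstein fillings of $(S^3,\xi_{\mathrm{st}})$ up to deformation equivalence \cite[Theorem 16.5]{cieliebak2012stein}, noting that the intervening diffeomorphism of $B^4$ --- which may fail to be isotopic to the identity because the connectivity of $\mathrm{Diff}_+(B^4,\partial B^4)$ is open --- is nevertheless relative to the boundary and so preserves the Legendrian knot type of $\Lambda$. Your write-up just makes explicit the bookkeeping (completion, ideal boundary, transport of co-cores) that the paper compresses into the preceding paragraph.
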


Next, note that regularity of a Legendrian knot implies AC-trivial handle-ribbonness of the smooth knot, as (parametric) Weinstein structures do not have handles of index $3$. Consequently, \cref{prop:smooth} gives: 

\begin{proposition}\label{prop:reg_smth_ribbon}
If a Legendrian knot is (weakly) regular, then it is smoothly ribbon.     
\end{proposition}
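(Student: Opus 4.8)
The plan is to carry out in detail the two-step argument sketched just above the statement: weak regularity forces the underlying smooth knot to be AC-trivial handle-ribbon, after which \cref{prop:smooth} upgrades this to ribbon.

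First I would remove the parenthetical ``weakly'': by the preceding proposition a weakly regular Legendrian knot is regular, so it suffices to treat a regular knot $\Lambda \subset (S^3,\xi_{\mathrm{st}})$. Unwinding the definition, $\Lambda$ is the boundary of the co-core disk $D$ of a critical Weinstein $2$-handle in a finite-type Weinstein structure $\mathcal{W}$ which, after passing from $\R \times S^3$ to the completion of the standard $B^4$ (equivalent by the remark in \cref{sec:gradations}), is Weinstein homotopic to the standard Weinstein structure on $B^4$. Since $\mathcal{W}$ is regular its Liouville vector field is tangent to $D$, so the complement $W_0 := B^4 \setminus N(D)$ carries the structure of a compact Weinstein domain and $\mathcal{W}$ is recovered from $W_0$ by reattaching the Weinstein $2$-handle dual to $D$; this is the structure result for regular Lagrangians (cf.\ \cref{lemma:cot_bund_str} and the diagrammatic presentation of Conway--Etnyre--Tosun).

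Next I would read off the two handle-theoretic consequences. Because $W_0$ is a $4$-dimensional Weinstein domain it deformation retracts onto its skeleton, of dimension at most $2$; hence $B^4 \setminus N(D)$ admits a handle decomposition built only from handles of index $0$, $1$, and $2$, in particular without $3$-handles, exhibiting $D$ as a handle-ribbon disk. Reattaching the $2$-handle dual to $D$ along $\Lambda$ yields the \emph{induced} handle decomposition of $B^4$, again with handles of index at most $2$, and by hypothesis this Weinstein handle decomposition is Weinstein homotopic to the one-handle decomposition of the standard $B^4$. A Weinstein homotopy is generated by Weinstein isotopy together with creation and cancellation of canceling pairs of Weinstein handles of indices $(0,1)$ or $(1,2)$ and Weinstein handle slides; forgetting the symplectic data this is precisely a finite sequence of smooth handle births, deaths, and slides, none of which ever introduces a handle of index $\ge 3$. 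Hence the induced handle decomposition of $B^4$ is Andrews--Curtis trivial, so $\Lambda$ is AC-trivial handle-ribbon, and \cref{prop:smooth} gives that $\Lambda$ is smoothly ribbon.

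The main obstacle is the care required in this last translation. One must check that the standard generators of Weinstein homotopies, once passed to the underlying smooth handlebodies, genuinely respect the index bound at every intermediate stage --- that no step secretly demands an index-$3$ handle --- and that the handlebody being homotoped is really $W_0$ with $D$'s dual handle reattached rather than a merely symplectomorphic model (this is exactly where the reduction ``weakly regular $\Rightarrow$ regular'' and the finite-type hypothesis are used, the latter permitting an argument with an honest compact handlebody in place of a completion). Both facts are routine in Cieliebak--Eliashberg Weinstein handle calculus, but each deserves an explicit sentence; everything else is a direct translation.
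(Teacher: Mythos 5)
Your proposal is correct and follows the paper's own argument: regularity forces the induced handle decomposition of $B^4$ (co-core complement plus the dual $2$-handle) to be AC-trivial handle-ribbon, since Weinstein structures and their homotopies involve no handles of index $3$, and then \cref{prop:smooth} concludes ribbonness. The paper compresses this into one sentence; your write-up just fills in the same details, including the reduction from weakly regular to regular.
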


In fact, G. Dimitroglou Rizell pointed out to the author that every Lagrangian-fillable Legendrian is smoothly ribbon fillable, a stronger version of \cref{prop:reg_smth_ribbon} which in particular gives the diagonal (non-reversible) red implication, and consequently the top-left equivalence, in \cref{fig:SRgradations}. Indeed, as explained in \cite{cao2014topologicallydistinct}, work of Eliashberg \cite{eliashberg1995pushoff} implies that any Lagrangian filling of a Legendrian may be perturbed to a symplectic filling of the transverse pushoff of the Legendrian. By Boileau and Orevkov \cite{boileau2001quasipositive}, this transverse pushoff is quasipositive, and is therefore the cross section of a holomorphic curve in $\C^2$ by Rudolph \cite{rudolph1983seifertribbons}, hence smoothly ribbon; see e.g. \cite{hayden2015positivefillable,hayden2019crosssections,mark2024fillable} for additional relevant discussion. 


\begin{remark}\label{remark:ELremark}
Recent work of Etnyre and Leverson \cite{etnyre2024lagrangian} shows that smooth ribbon cobordisms may be approximated by (strongly) decomposable Lagrangian cobordisms whose Legendrian ends are sufficiently stabilized. Their result then gives a weaker version of \cref{thm:main3} where the single stabilization is replaced by ``sufficiently many stabilizations.'' In fact, by generalizing the above discussion on AC-trivial handle-ribbonness to handle-ribbon cobordisms rather than just slice disks, it is likely that Etnyre and Leverson's result gives a ``sufficiently stabilized'' affirmative answer to \cref{q:3}. We do not consider the details here.
\end{remark}

Though it is not directly relevant, within a survey of slice-ribbon and its role in symplectic and contact topology we would be remiss not to mention work of Baker \cite{baker2016concordancefibered}. Among other results, he showed that to disprove slice-ribbon, it suffices to find two open book decompositions of $(S^3, \xi_{\mathrm{st}})$ whose connected bindings are smoothly concordant, but not isotopic.

\section{Background}\label{sec:background}

Here we collect the background material necessary for the rest of the article. This includes exactness and decomposability in \cref{subsec:classes}, Weinstein topology and Legendrian Kirby calculus in \cref{subsec:weinstein_review}, regularity in \cref{subsec:reg_review}, normal rulings in \cref{subsec:NR_review}, and a bit of convex surface theory in \cref{subsec:CHT_review}.

\subsection{Exact and decomposable cobordisms}\label{subsec:classes}

\begin{definition}
Let $\Lambda_-, \Lambda_+ \subset (Y^3, \xi=\ker\alpha)$ be oriented Legendrian links in a co-oriented contact manifold. A \textbf{Lagrangian cobordism from $\Lambda_-$ to $\Lambda_+$} is an embedded oriented Lagrangian surface $L\subset (\R_s \times Y, d\lambda_{\mathrm{st}})$ where $\lambda_{\mathrm{st}}:=e^s\, \alpha$ such that 
\begin{enumerate}
    \item $L \cap ([-s_0, s_0]\times Y)$ is compact for any $s_0\in \R_{\geq 0}$, and 
    \item there exists a $C>0$ such that 
    \begin{align*}
        L \cap ([C, \infty) \times Y) &= [C, \infty) \times \Lambda_+, \\
        L \cap ((-\infty, -C] \times Y) &= (-\infty, -C] \times \Lambda_-.
    \end{align*}
\end{enumerate}
If $\lambda_{\mathrm{st}}\mid_L = df$ for some function $f:L \to \R$ which is constant on $[C, \infty) \times \Lambda_+$ and $(-\infty, -C] \times \Lambda_-$, then $L$ is \textbf{exact}.
\end{definition}

\begin{figure}[ht]
	\begin{overpic}[scale=.3475]{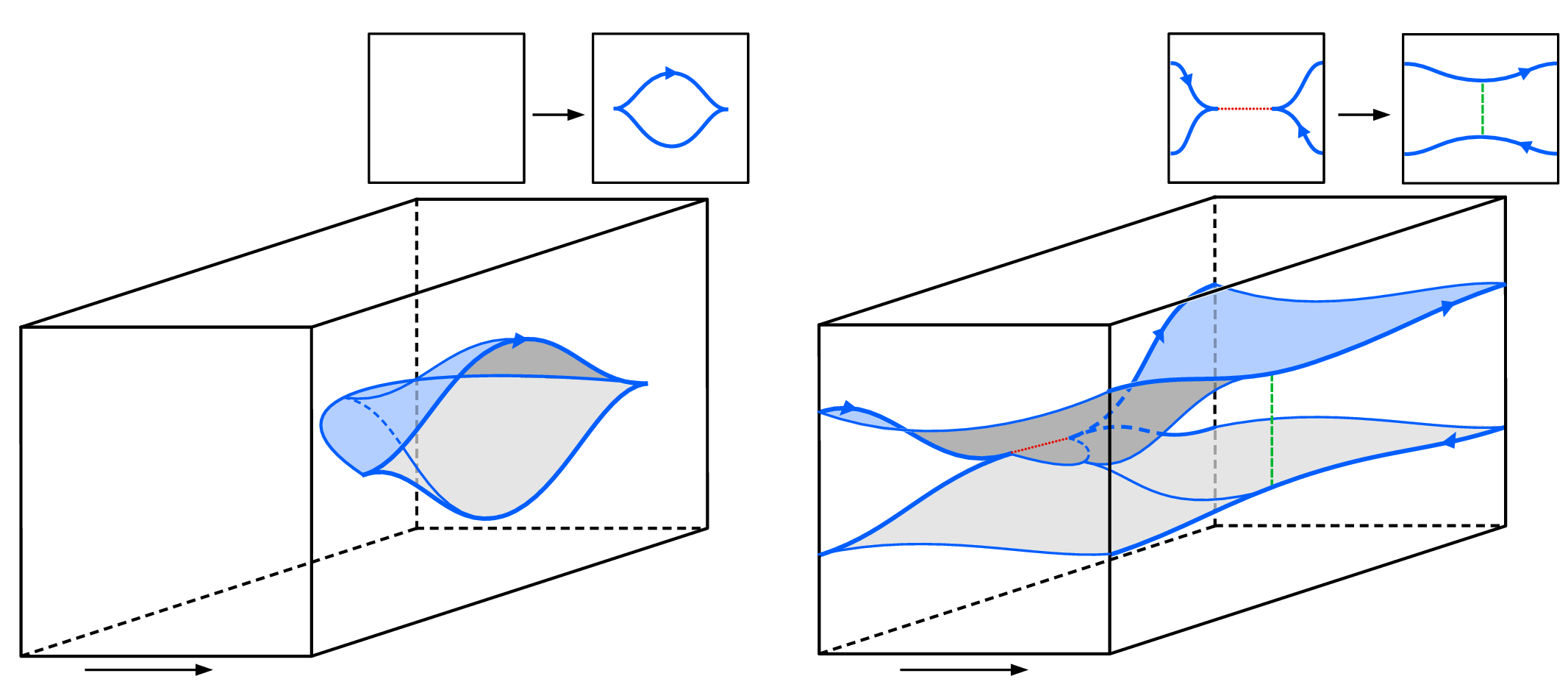}   
      \put(14,1){\tiny $s$}
      \put(66,1){\tiny $s$}
	\end{overpic}
	\caption{Exact cobordisms (lifted to fronts of Legendrian surfaces in $\R^5$) associated to the decomposable moves unknot birth (left) and Legendrian surgery (right).}
	\label{fig:decomposable}
\end{figure}

\begin{definition}[Decomposable moves]\label{def:dec_moves}\textbf{Unknot death} is the removal of an isolated standard\footnote{Here and throughout, \textit{standard} refers to the $2$-cusp front projection of the max-tb unknot.} max-tb unknot component; \textbf{unknot birth} is the reverse procedure. A \textbf{pinch move} is performed along a contractible Reeb chord (c.f. \cref{def:contr}) and results in a Legendrian whose front is given by modifying the upper far right front in \cref{fig:decomposable} to produce the upper third front. When $\Lambda$ is oriented, a pinch move is \textbf{orientable} if the initial strands are oppositely oriented in the front. The reverse procedure is \textbf{(ambient) Legendrian surgery}.
\end{definition}

Ambient Legendrian surgery on a link $\Lambda$ may be performed more generally along an embedded Legendrian arc $\gamma$ with boundary on $\Lambda$ such that $\gamma \cap \Lambda$ is $\xi$-transverse. The result is band surgery with core $\gamma$ and framing that makes half of a left-handed turn with respect to the contact framing; consequently, the surgery is \textit{orientable} only if this left-handed half twist respects the orientation of $\Lambda$. We denote the resulting link $\mathrm{Surg}(\Lambda, \gamma)$, or $\mathrm{Surg}(\Lambda, G)$ when $G=\{\gamma_1, \dots, \gamma_k\}$ is a set of multiple surgery arcs; see \cref{fig:LHG}.

\begin{figure}[ht]
	\begin{overpic}[scale=.345]{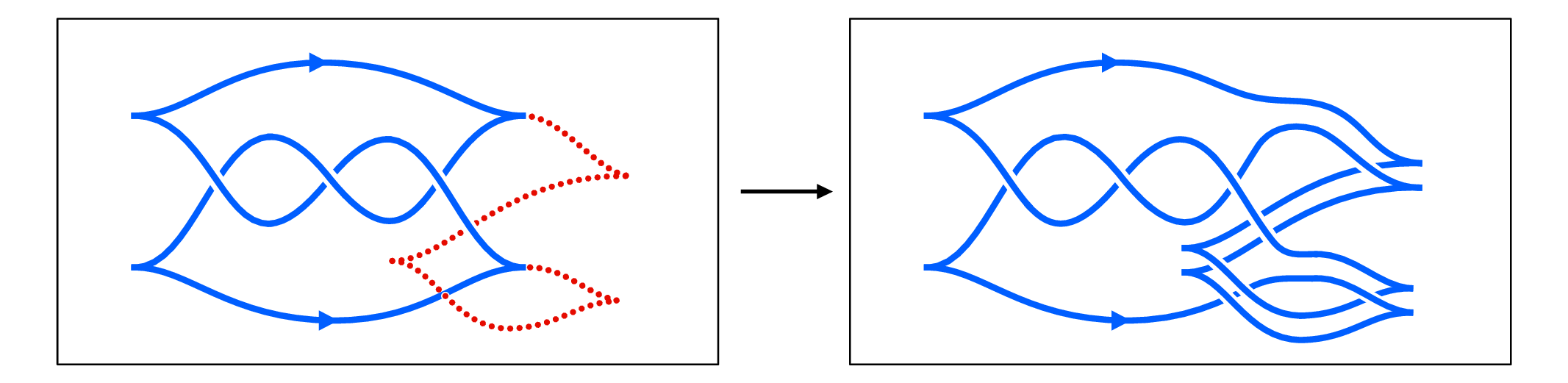}   
    \put(78,20){\scriptsize \textcolor{lightblue}{$\mathrm{Surg}(\Lambda, \gamma)$}}
    \put(28,20){\scriptsize \textcolor{lightblue}{$\Lambda$}}
    \put(38,16){\scriptsize \textcolor{darkred}{$\gamma$}}
	\end{overpic}
	\caption{A general Legendrian surgery.}
	\label{fig:LHG}
\end{figure}

\begin{theorem}\cite{ekholm2012exactcobordisms}
Suppose that $\Lambda_+\subset (Y, \xi)$ is obtained from $\Lambda_-$ by a sequence of Legendrian Reidemeister moves, unknot births, and (orientable) Legendrian surgeries. Then there is an (orientable) exact Lagrangian cobordism in $\R \times Y$ from $\Lambda_-$ to $\Lambda_+$. By definition, we call a cobordism arising in this way \textbf{decomposable}.  
\end{theorem}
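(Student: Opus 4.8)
The plan is to build the exact Lagrangian cobordism as a concatenation of elementary pieces, one for each move in the given sequence, and then observe that concatenation of exact Lagrangian cobordisms is exact. So the proof breaks into three parts: (i) construct the elementary cobordism associated to each of the three move types, (ii) check exactness of each elementary piece, and (iii) verify that exactness survives concatenation (and that the concatenated surface is genuinely embedded, smooth, and cylindrical over the correct ends).

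For (i), a Legendrian Reidemeister move is realized by a Legendrian isotopy $\Lambda_t$, $t\in[0,1]$; the trace $L=\set{(s,x)}{x\in\Lambda_{\rho(s)}}$ for a suitable monotone reparametrization $\rho$ that is constant near $s=\pm\infty$ gives a Lagrangian cylinder. For the unknot birth, the elementary cobordism is the standard minimum: locally the front of a Legendrian surface in $\R^5$ looking like a ``flying saucer'' whose $s$-slices are empty for $s$ small, a point at the birth moment, and a small standard max-tb unknot thereafter (the left picture of \cref{fig:decomposable}); cylindrical at $+\infty$ over $\Lambda_-\sqcup U$ and at $-\infty$ over $\Lambda_-$. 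For the Legendrian surgery move (pinch in reverse), the elementary cobordism is the local saddle built from a contractible Reeb chord: $s$-slices near $-\infty$ look like the upper right front of \cref{fig:decomposable} (two strands) and near $+\infty$ like the pinched front, with a single index-$1$ critical point of the $s$-function; the orientability hypothesis on the pinch is exactly what is needed for the resulting surface to be coherently oriented. One should cite the explicit local models of Ekholm--Honda--K\'alm\'an for these last two; the point is that each is a compact Lagrangian surface in $[-C,C]\times Y$ with cylindrical ends, which one then caps off with genuine cylinders $[\,C,\infty)\times(\cdot)$ and $(-\infty,-C\,]\times(\cdot)$.

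For (ii), exactness of each elementary piece is checked by exhibiting a primitive $f$ of $\lambda_{\mathrm{st}}|_L$ that is locally constant on the cylindrical ends. The isotopy-trace cylinders are exact because the trace of a Legendrian isotopy is an exact Lagrangian cylinder (the primitive can be taken to vanish on the ends, using that $\alpha|_{\Lambda_t}=0$ for each $t$). For the birth and the saddle the primitives are computed directly from the local models, and are constant on each cylindrical end because $H^1$ of a disk (resp. a pair of pants retracting appropriately) poses no obstruction and the ends are Legendrian. For (iii), given exact cobordisms $L_1$ from $\Lambda_-$ to $\Lambda_0$ and $L_2$ from $\Lambda_0$ to $\Lambda_+$, one rescales in the $s$-direction and glues along the common cylindrical region over $\Lambda_0$; after adjusting the primitives by constants on each side so that they agree on the overlap (possible since each is locally constant there), the glued primitive shows $L_1\odot L_2$ is exact. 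Iterating over the finitely many moves in the sequence yields the desired decomposable cobordism, and tracking orientations at the birth and orientable-surgery steps gives the orientable refinement.

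The main obstacle I expect is not conceptual but book-keeping: making the local front models of the birth and the pinch precise enough in $\R^5 = J^1(\R^2)$ that one can honestly verify embeddedness, smoothness at the critical points, the cylindrical-end condition, and — crucially — the existence of a primitive that is \emph{constant} (not merely closed) on the ends. This is where the contractibility hypothesis on the Reeb chord enters: it is what guarantees the saddle's primitive does not pick up a nonzero period around the new topology, so that the concatenation in (iii) can actually be carried out. In a paper of this kind the cleanest move is to quote the established local models and exactness computations of \cite{ekholm2012exactcobordisms} for these two elementary pieces and spend one's own effort only on the concatenation bookkeeping.
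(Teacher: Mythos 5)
The paper does not prove this statement; it is quoted directly from Ekholm--Honda--K\'alm\'an \cite{ekholm2012exactcobordisms} as background, so there is no in-paper argument to compare against. Your outline --- elementary cobordisms for each move, exactness of each piece via an explicit primitive constant on the cylindrical ends, then concatenation after adjusting primitives by constants --- is the standard structure of the proof in the cited source, and the concatenation step (iii) and the orientability bookkeeping are described correctly.

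There is, however, one genuine error in step (i): the naive trace $\set{(s,x)}{x\in\Lambda_{\rho(s)}}$ of a Legendrian isotopy is \emph{not} a Lagrangian surface in general. Parametrizing it by $(s,u)\mapsto(s,\gamma_{\rho(s)}(u))$, the term $e^s\,ds\wedge\alpha$ does vanish on the tangent planes (since $\alpha(\gamma')=0$), but $e^s\,d\alpha$ restricts to $e^s\rho'(s)\,d\alpha(\partial_t\gamma,\gamma')$, which is nonzero unless the isotopy happens to be generated by a Reeb-direction-free flow. The correct construction (this is where the real content of the isotopy case lives in \cite{ekholm2012exactcobordisms}, and in Chantraine's earlier work) either corrects the trace by a term built from the contact Hamiltonian $\alpha(\partial_t\gamma)$ of the isotopy, or stretches the isotopy so that $\rho'$ is small and then applies a normal-form/graphical argument near the trace; either way the resulting cylinder is only Hamiltonian-isotopic to the naive trace, not equal to it. A second, smaller imprecision: contractibility of the Reeb chord is not primarily about preventing a period of the primitive around the new topology --- the local saddle model is exact on the nose --- but rather about being able to shrink the chord into a standard Darboux neighborhood so that the local Lagrangian $1$-handle model can be inserted without disturbing the rest of the link, and so that the resulting primitive can be normalized to a single constant on each end (which is what makes your gluing in (iii) go through). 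With these two corrections, deferring the explicit local models to \cite{ekholm2012exactcobordisms} as you propose is exactly the right move.
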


One reason why decomposability is subtle is the requirement that pinch moves be performed along \textit{contractible} Reeb chords, and contractibility is somewhat nebulous.\footnote{The discussion of \textit{strong decomposability} in \cref{sec:gradations} is relevant here.} Strictly speaking, the definition (first given in \cite[Definition 6.13]{ekholm2012exactcobordisms}) refers explicitly to the Lagrangian projection.

\begin{definition}[Contractible Reeb chord]\label{def:contr}
Let $\Lambda$ be a Legendrian whose Lagrangian projection $\Pi_{xy}(\Lambda)$ contains only transverse double points. A Reeb chord $\zeta$ corresponding to such a double point is \textbf{contractible} if there is a Legendrian isotopy of $\Lambda$ inducing a planar isotopy of $\Pi_{xy}(\Lambda)$ --- that is, with no Lagrangian Reidemeister moves --- that makes the length of $\zeta$ arbitrarily small.     
\end{definition}

See \cref{ex:contr} below to clarify this subtlety. It is instead often convenient to appeal to a more generous definition of contractibility that, up to a reasonable isotopy, implies contractibility as in \cref{def:contr}.

\begin{definition}[Front contractible Reeb chord]
Let $\Lambda$ be a Legendrian link and $\zeta$ a Reeb chord with endpoints on $\Lambda$. We say that $\zeta$ is \textbf{front contractible} if $\Pi_{xz}(\gamma) \cap \Pi_{xz}(\Lambda) = \partial \Pi_{xz}(\zeta)$, where $\Pi_{xz}$ is the front projection. 
\end{definition}

\begin{lemma}\label{lemma:front_contr}
Let $\Lambda$ be a Legendrian link and $\zeta$ a front contractible Reeb chord. Let $\mathcal{U}$ be an arbitrarily small neighborhood of $\Pi_{xz}(\gamma)$ in the front. Then there is an isotopy of $\Lambda$ supported in $\Pi_{xz}^{-1}(\mathcal{U})$ such that the induced Reeb chord is contractible. 
\end{lemma}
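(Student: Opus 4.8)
The plan is to reduce the general statement about a front contractible Reeb chord to a purely local model in a small neighborhood of the front arc $\Pi_{xz}(\gamma)$, and then show that inside that local model the Reeb chord can be made contractible in the sense of \cref{def:contr}. The hypothesis $\Pi_{xz}(\gamma) \cap \Pi_{xz}(\Lambda) = \partial \Pi_{xz}(\zeta)$ says that, apart from its two endpoints, the projected chord arc meets nothing else in the front; in particular we may choose the neighborhood $\mathcal{U}$ of $\Pi_{xz}(\gamma)$ so that $\Pi_{xz}^{-1}(\mathcal{U})$ contains only the two strands of $\Lambda$ through the endpoints of $\zeta$ and no crossings, no cusps, and no other components. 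So the whole problem takes place in a bidisk $\Pi_{xz}^{-1}(\mathcal{U})$ containing exactly two embedded Legendrian arcs, one ``above'' the other with respect to the Reeb ($z$, or equivalently the symplectization-independent front-height) direction, joined by the vertical chord $\zeta$.

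First I would set up this local normal form carefully: after a $C^\infty$-small Legendrian isotopy supported in $\Pi_{xz}^{-1}(\mathcal{U})$ (which does not disturb $\Lambda$ outside $\mathcal{U}$, hence gives an isotopy of $\Lambda$ of the required support), the two strands can be taken to be graphical over a common interval in the $x$-direction, say $z = f_\pm(x)$ with $f_+ > f_-$, and with $f_+(x_0) - f_-(x_0)$ small only near the chord foot $x_0$; the corresponding Lagrangian projection $\Pi_{xy}$ of the two strands consists of two arcs with slopes $y = f_\pm'(x)$. The key point is that in such a two-strand graphical picture there are no Lagrangian Reidemeister moves available or needed: shrinking the front ``gap'' $f_+ - f_-$ to zero along a sub-interval is realized by a planar isotopy of $\Pi_{xy}(\Lambda)$, because the only double point of $\Pi_{xy}$ in this region is the transverse crossing produced by $f_+' = f_-'$ at the chord foot, and its Reeb length, which equals the area/height $f_+ - f_-$ integrated appropriately, can be driven to $0$ by a planar isotopy that does not cross any other strand. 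Here I would invoke the standard fact (as in \cite{ekholm2012exactcobordisms}) that the length of a Reeb chord is a continuous functional that, in the absence of obstructing strands, can be decreased by an ambient isotopy of the Lagrangian projection supported near the chord; the front contractibility hypothesis is exactly what guarantees the absence of obstructing strands inside $\mathcal{U}$.

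The main obstacle — and the step deserving the most care — is verifying that this length-shrinking isotopy of $\Pi_{xy}(\Lambda)$ is genuinely \emph{planar}, i.e. induces no Lagrangian Reidemeister moves, and lifts to a Legendrian isotopy of $\Lambda$ (not merely a Lagrangian one). For the planarity: one must check that as $f_+ - f_-$ is decreased, the two projected arcs $y = f_\pm'(x)$ do not acquire or lose crossings with each other or with the rest of $\Pi_{xy}(\Lambda)$; the former is handled by choosing the interpolation $f_\pm$ monotonically so that the crossing structure in $\mathcal{U}$ is constant, and the latter follows from $\Pi_{xy}^{-1}$ of the relevant region being contained in $\Pi_{xz}^{-1}(\mathcal{U})$, which by hypothesis contains nothing else. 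For the lift: a Legendrian isotopy is determined by its front together with the $y = \partial z/\partial x$ slopes, so once the front isotopy is fixed and the strands remain graphical, the Legendrian lift exists and is unique, and one checks it agrees with the fixed $\Lambda$ on the boundary of $\Pi_{xz}^{-1}(\mathcal{U})$, giving the desired globally-supported isotopy. Assembling these local observations then yields the isotopy of $\Lambda$ supported in $\Pi_{xz}^{-1}(\mathcal{U})$ after which $\zeta$ satisfies \cref{def:contr}, completing the proof. I would also remark that this is precisely the mechanism used implicitly whenever pinch moves are performed from front diagrams, so the lemma simply records it.
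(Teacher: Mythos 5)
Your proposal is correct and follows essentially the same route as the paper: use front contractibility to shrink $\mathcal{U}$ until only the two strands through $\partial\zeta$ remain, put the front in a standard two-strand graphical position by an isotopy supported in $\Pi_{xz}^{-1}(\mathcal{U})$, and then observe in the honest Lagrangian projection that the chord length can be driven to zero by a planar isotopy. The paper's proof is just a terser version of this (it normalizes to the local front model of \cref{fig:decomposable} and cites the figure rather than writing out the graphical coordinates $z=f_\pm(x)$), so no substantive difference.
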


\begin{proof}
By front contractibility, we may assume that $\Pi_{xz}(\Lambda)\vert_\mathcal{U}$ consists only of the two strands connected by $\Pi_{xz}(\zeta)$. We may then perform a planar isotopy of the front $\Pi_{xz}(\Lambda)\vert_\mathcal{U}$ to the top right diagram of \cref{fig:decomposable}. Then, converting to the Lagrangian projection --- not using Ng's resolution, but rather the honest Lagrangian projection --- we see that $\zeta$ is contractible in the sense of \cref{def:contr}.     
\end{proof}

\begin{figure}[ht]
	\begin{overpic}[scale=.34]{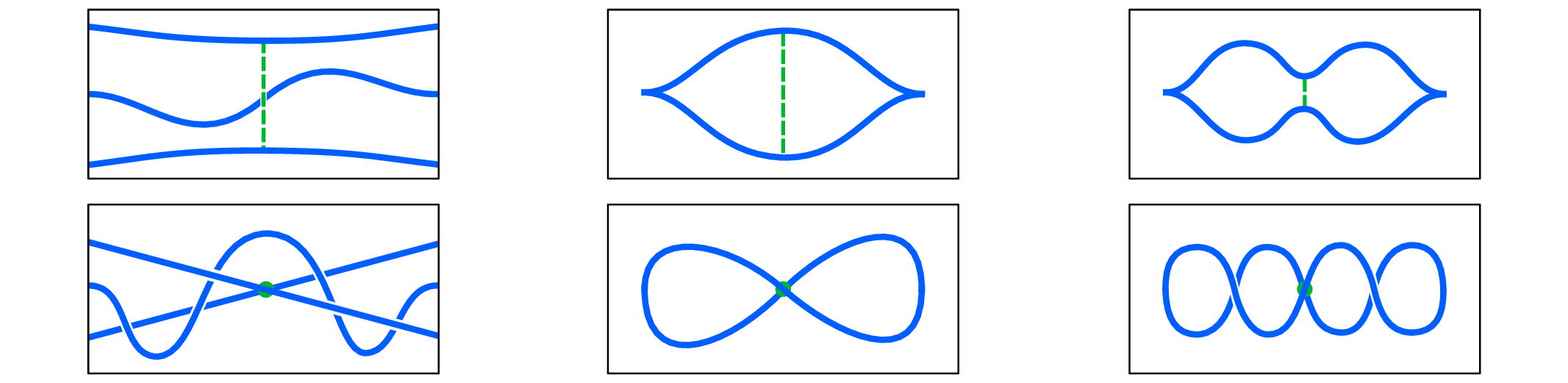}   

	\end{overpic}
	\caption{A Reeb chord that is not front contractible, front contractible but not contractible, and contractible, respectively.}
	\label{fig:contr}
\end{figure}

\begin{example}[Contractibility and front contractibility]\label{ex:contr}
\cref{fig:contr} gives three examples of Reeb chords. The top half of the figure consists of front projections, while the bottom half contains the corresponding honest Lagrangian projections. On the far left is a Reeb chord which is not front contractible. In the middle is the standard front for the max-tb unknot with its single Reeb chord. This Reeb chord is front contractible, but not contractible. Finally, the far right depicts the application of \cref{lemma:front_contr} to the standard unknot front, producing a genuinely contractible Reeb chord after an isotopy. 
\end{example}

\subsection{Weinstein topology}\label{subsec:weinstein_review}

Here we review Weinstein structures and discuss in detail the standard structure on the cotangent bundle. We also discuss the role of the contact topology of a regular level set, in particular the handlebody diagrams of Gompf \cite{gompf1998handlebody}. The standard reference is \cite{cieliebak2012stein}. 

\begin{definition}
Let $(W, \partial_- W, \partial_+W)$ be a compact cobordism of dimension $2n$. A \textbf{Weinstein structure} on the cobordism is a tuple $(\lambda, \phi)$ where
\begin{enumerate}
    \item $(W, \partial_- W, \partial_+ W, \lambda)$ is a \textbf{Liouville cobordism}, i.e., $d\lambda$ is symplectic and the \textbf{Liouville vector field} $X_{\lambda}$ defined by $\iota_{X_{\lambda}}d\lambda = \lambda$ is outwardly (resp. inwardly) transverse to $\partial_+ W$ (resp. $\partial_- W$), 
    \item $\phi: W \to \R$ is a Morse function such that $\partial_{\pm} W = \phi^{-1}(c_{\pm})$ is a regular level set, and 
    \item for some Riemannian metric, the Liouville vector field $X_{\lambda}$ is gradient-like for $\phi$. 
\end{enumerate}
If $\partial_- W = \emptyset$, we write $(W, \lambda, \phi)$ and call the cobordism a \textbf{Weinstein domain}. A \textbf{Weinstein homotopy} is a $1$-parameter family $(W, \partial_- W, \partial_+ W, \lambda_t, \phi_t)$ of Weinstein structures on a fixed cobordism $W$, allowing for embryonic critical points. Two Weinstein cobordisms are \textbf{deformation equivalent} if they are Weinstein homotopic under the pullback by a diffeomorphism.  
\end{definition}

As a consequence of the Liouville condition, critical points of a Weinstein structure of dimension $2n$ have index at most $n$. Consequently, the $4$-dimensional Weinstein cobordisms that we consider in this paper comprise only of $0$-, $1$-, and $2$-handles.

Every Weinstein cobordism $(W, \partial_- W, \partial_+ W, \lambda, \phi)$ can be canonically \textit{completed} by attaching cylindrical ends
\[
 ((-\infty,0]_s \times \partial_- W, \, e^s\, \lambda\mid_{\partial_- W}) \, \cup \, (W, \partial_- W, \partial_+ W, \lambda) \, \cup \, ([0,\infty)_s \times \partial_+ W, \, e^s\, \lambda\mid_{\partial_+ W})
\]
and extending $\phi$ in the obvious way. Weinstein homotopic cobordisms have symplectomorphic completions \cite[Corollary 11.21]{cieliebak2012stein}, and we will usually not distinguish between a Weinstein cobordism and its completion. Sometimes a Weinstein cobordism will be written as $(W, \lambda, \phi)$, or even just $W$. Finally, we also consider cobordisms with corners, where we allow $\partial_{\pm} W$ to have a common boundary $\partial^2 W$. This occurs, for instance, when discussing Weinstein handles associated to $\phi$ and $X_{\lambda}$.

\subsubsection{Cotangent bundles}
Let $L$ be a compact $n$-dimensional cobordism. The cotangent bundle $T^*L$ admits a canonical Liouville form $\lambda_{\mathrm{st}} = \vec{p}\, d\vec{q}$, where $\vec{q}$ is any local coordinate system on $L$ and $\vec{p}$ is the corresponding dual coordinate system. The Liouville vector field $\vec{p}\, \partial_{\vec{p}}$ vanishes along the $0$-section. Via a choice of Riemannian metric on $L$ we may induce a metric on $T^*L$ and consider the unit disk cotangent bundle $\D^*L = T^*L \cap \{\norm{\vec{p}} \leq 1\}$. Strictly speaking, to endow $\D^*L$ with a (genuinely Morse) Weinstein structure --- either of a Weinstein domain after rounding corners, or possibly a Weinstein cobordism with corners if $\partial L \neq \emptyset$; see \cref{fig:corners} --- we need to perform a deformation. To that end, we have the following lemma, which is essentially \cite[Lemma 12.8]{cieliebak2012stein}. We include a proof for completeness, as the statement is crucial for \cref{thm:cob_diag}.

\begin{lemma}[The standard Weinstein structure on the cotangent bundle]\label{lemma:cot_bund_str}
Let $L$ be a compact $n$-dimensional cobordism equipped with a Riemannian metric. Consider $\D^*L$ with the standard Liouville form $\lambda_{\mathrm{st}} = \vec{p}\, d\vec{q}$. Let $X\in \mathfrak{X}(L)$ be any vector field. There is a Liouville homotopy to a Liouville form $\lambda$, compactly supported outside of a neighborhood of the horizontal boundary $\partial_{\mathrm{hor}} \D^*L = \mathbb{S}^*L$, with the following properties. 
\begin{enumerate}
    \item The Liouville vector field $X_{\lambda}$ coincides with $X$ along the $0$-section.\label{cot_lvf_1} 
    
    \item If $X$ is gradient-like for a function $f:L \to \R$, and if the eigenvalues of $X$ at zeroes have real part $< 1$, then $X_{\lambda}$ is gradient-like for a function $\hat{f}:\D^*L \to \R$ that agrees with $\ve f(\vec{q}) + \norm{\vec{p}}^2$ near $\{\vec{p}=0\}$ for some sufficiently small $\ve > 0$, has no critical points away from $\{\vec{p}=0\}$, and agrees with $\norm{\vec{p}}^2$ near $\partial_{\mathrm{hor}} \D^*L$.\label{cot_lvf_2}
\end{enumerate} 
In particular, if $L$ is a cobordism  and $f:L \to \R$ is Morse, then there is a gradient-like vector field $X$ for $f$ such that $(\D^*L, \lambda, \hat{f})$ is a Weinstein domain if $\partial_- L = \emptyset$, or a Weinstein cobordism with corner $\mathbb{S}^*L\mid_{\partial_- L}$ if $\partial_- L \neq \emptyset$. Moreover, any two such choices of Weinstein structure are Weinstein homotopic, and if the two structures agree near $\partial_{\pm} L$ then the homotopy may be taken to be constant near $\partial_{\pm} L$.   
\end{lemma}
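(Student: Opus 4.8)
The plan is to follow the proof of \cite[Lemma~12.8]{cieliebak2012stein}, adding the bookkeeping needed to pin down the form of $\hat f$ near the zero section and near $\mathbb{S}^*L$, to track corners when $\partial L\neq\emptyset$, and to establish the uniqueness clause. Throughout, $\pi\colon \D^*L\to L$ is the projection and $L$ is viewed as the zero section. On $\D^*L$ the form $\lambda_{\mathrm{st}}=\vec p\,d\vec q$ has Liouville field $\vec p\,\partial_{\vec p}$, which vanishes exactly along the zero section, is outwardly transverse to $\mathbb{S}^*L=\partial_{\mathrm{hor}}\D^*L$, and increases $\norm{\vec p}^2$ along its flow. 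Given $X\in\mathfrak{X}(L)$, I would let $h\colon T^*L\to\R$ be the fiberwise-linear function $h(\vec q,\vec p)=\vec p\big(X(\vec q)\big)$, fix a cutoff $\rho=\rho(\norm{\vec p})$ that is $1$ near $\vec p=0$ and $0$ near $\norm{\vec p}=1$, and set $\lambda:=\lambda_{\mathrm{st}}-d(\rho h)$ (the sign being a convention). Since this perturbation is exact, $d\lambda=d\lambda_{\mathrm{st}}$ stays symplectic and $t\mapsto\lambda_{\mathrm{st}}-t\,d(\rho h)$ is a Liouville homotopy, supported away from $\mathbb{S}^*L$ where $\rho h\equiv 0$. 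A direct computation gives $X_\lambda=\vec p\,\partial_{\vec p}+X_{\rho h}$, and along the zero section $X_{\rho h}$ restricts to $X$, which is the first assertion.

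For the second assertion, assume $X$ is gradient-like for $f$ with $\mathrm{Re}\,\mathrm{spec}(D_{q_0}X)<1$ at every zero $q_0$, and set $\hat f:=\varepsilon\,(f\circ\pi)+\norm{\vec p}^2$ for small $\varepsilon>0$. Near $\{\vec p=0\}$ one has $X_\lambda=\vec p\,\partial_{\vec p}+X_h$, so $d\hat f(X_\lambda)=\varepsilon\,df(X)+2\norm{\vec p}^2+R$ where $R=O(\norm{\vec p}^2)$ comes from the fiber part of $X_h$; shrinking $\varepsilon$ and the support of $\rho$ makes this positive off $\mathrm{Crit}(f)\times\{0\}=\mathrm{Crit}(\hat f)$. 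Away from the zero section $X_\lambda$ keeps a nonvanishing $\partial_{\vec p}$-component, so there it has no zeros and still increases $\hat f$. At a critical point $(q_0,0)$ the Hessian of $\hat f$ is $\varepsilon\,\mathrm{Hess}_{q_0}f$ on the base and positive definite on the fiber, hence has the Morse index of $f$ at $q_0$; meanwhile the linearization of $X_\lambda$ is block triangular with base block $D_{q_0}X$ and fiber eigenvalues $1-\mu$, $\mu\in\mathrm{spec}(D_{q_0}X)$, so the hypothesis forces the fiber eigenvalues to have positive real part and makes the stable dimension of $X_\lambda$ agree with the index of $\hat f$. Thus $X_\lambda$ is gradient-like for $\hat f$; near $\mathbb{S}^*L$, where $\rho\equiv0$, we have $\lambda=\lambda_{\mathrm{st}}$, $X_\lambda=\vec p\,\partial_{\vec p}$, $\hat f=\norm{\vec p}^2$.

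For the ``in particular'' clause, given Morse $f\colon L\to\R$, I would pick any gradient-like vector field $V$ for $f$ equal to $\pm\partial_t$ in collar coordinates on $\partial_\pm L$ and take $X:=cV$ with $c>0$ small enough that $\mathrm{Re}\,\mathrm{spec}(D_{q_0}X)=c\,\mathrm{Re}\,\mathrm{spec}(D_{q_0}V)<1$; gradient-likeness is scale invariant. Running the construction, $X_\lambda\approx X\oplus\vec p\,\partial_{\vec p}$ near the zero section is outwardly transverse to $\D^*L\mid_{\partial_+L}$ and inwardly transverse to $\D^*L\mid_{\partial_-L}$ (and tangent to these over the locus $\rho=0$), so after rounding, $\D^*L$ is a Weinstein domain when $\partial_-L=\emptyset$ and otherwise a Weinstein cobordism whose only non-smoothable corner is $\mathbb{S}^*L\mid_{\partial_-L}$. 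Two structures built this way from data $(X_i,\varepsilon_i,\rho_i)$, $i=0,1$ (with the same $f$), are then joined by first scaling $X_0,X_1$ further via the Liouville homotopies above until $(1-t)X_0+tX_1$ satisfies the eigenvalue bound for all $t$, and then feeding the affine family $X_t$, together with affine interpolations of $\varepsilon_i$ and $\rho_i$, back through the construction; this yields a Weinstein homotopy, which is constant near $\partial_\pm L$ whenever the two structures already agree there.

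\emph{Expected main obstacle.} The technical heart is the verification that $X_\lambda$ is gradient-like for $\hat f=\varepsilon\,f\circ\pi+\norm{\vec p}^2$: one must simultaneously control the fiberwise error $R$ near the zero section, exclude critical points away from it, and use the eigenvalue hypothesis to match the Morse index of $\hat f$ with the stable dimension of $X_\lambda$ at each critical point. The corner bookkeeping and the uniqueness step are comparatively routine, the one subtlety being the ``shrink first'' trick that keeps the linear interpolation of gradient-like vector fields inside the eigenvalue constraint.
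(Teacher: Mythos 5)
Your construction is the same as the paper's ($\lambda=\lambda_{\mathrm{st}}-d(\eta\cdot\ip{\vec p}{X})$ with a radial cutoff, $X_\lambda=\vec p\,\partial_{\vec p}-X_H$, and $\hat f=\ve f+\norm{\vec p}^2$ near the zero section), but the core estimate has a genuine gap. You write $d\hat f(X_\lambda)=\ve\,df(X)+2\norm{\vec p}^2+R$ with $R=O(\norm{\vec p}^2)$ and claim that shrinking $\ve$ and the support of the cutoff makes this positive. That cannot work: the error term is explicitly $R=-2\ip{\vec p}{\tfrac{\partial X}{\partial\vec q}\vec p}$, which is of exactly the same order as the good term $2\norm{\vec p}^2$, and neither $\ve$ nor the size of the cutoff's support enters it. Where $X(\vec q)=0$ the term $\ve\,df(X)$ also vanishes, so positivity for $\vec p\neq 0$ hinges entirely on the quadratic form $\ip{\vec p}{\bigl(I-\tfrac{\partial X}{\partial\vec q}\bigr)\vec p}$ being positive definite --- and this is precisely where the hypothesis $\mathrm{Re}\,\mathrm{spec}(DX)<1$ is used in the paper's proof. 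Away from zeroes of $X$ one kills $\tfrac{\partial X}{\partial\vec q}$ by passing to flow-box coordinates, and at zeroes the eigenvalue bound gives definiteness. In your write-up the eigenvalue hypothesis is invoked only for matching the Morse index of $\hat f$ with the stable dimension of $X_\lambda$ at critical points (which is correct but secondary); its essential role, controlling $R$, is missing, and the mechanism you substitute for it fails.

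A smaller but real inconsistency: you define $\hat f=\ve(f\circ\pi)+\norm{\vec p}^2$ globally, yet later assert $\hat f=\norm{\vec p}^2$ near $\mathbb{S}^*L$. The statement requires the latter, so $f\circ\pi$ must also be cut off, i.e.\ $\hat f=\ve\,\eta(\vec p)f(\vec q)+\norm{\vec p}^2$ as in the paper; one then needs the extra check (done in the paper by taking $\ve$ small against the lower bound $\norm{\vec p}>\delta$ on the support of $d\eta$) that the terms involving $d\eta$ do not create critical points or destroy positivity in the transition region. Your sketch of the uniqueness clause via affine interpolation of the data after rescaling is reasonable and goes beyond what the paper's proof records, but it depends on first repairing the positivity argument above.
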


\begin{figure}[ht]
	\begin{overpic}[scale=.34]{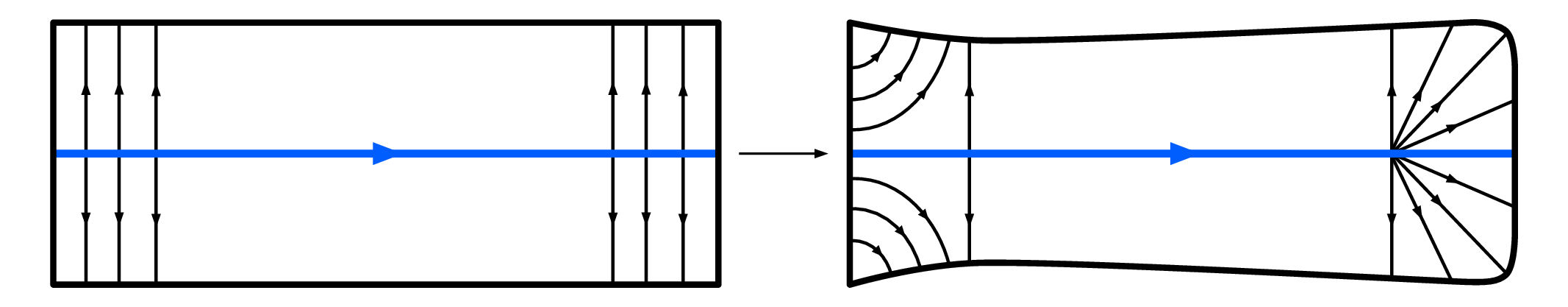}   
       
	\end{overpic}
	\caption{Perturbing the Liouville structure on $\D^*L$ when $L$ is a cobordism with $\partial_- L \neq \emptyset$. The result is a Weinstein cobordism with corners.}
	\label{fig:corners}
\end{figure}

\begin{proof}
Let $\eta:\D^n_{\vec{p}}: \to [0,1]$ be a smooth bump function, identically $1$ near the origin and identically $0$ in a neighborhood of $\partial \D^n$. Let $H: \D^*L \to \R$ be defined by $H(\vec{q}, \vec{p}) := \ip{\eta(\vec{p})\vec{p}}{X(\vec{q})}$, where $\ip{\cdot}{\cdot}$ is the pairing between $T^*L$ and $TL$; that is,
\[
H(\vec{q},\vec{p}) = \eta(\vec{p}) \, \lambda_{\mathrm{st}, (\vec{q},\vec{p})}(X(\vec{q})).
\]
Near $\vec{p} = 0$ we have $H(\vec{q}, \vec{p}) = \ip{\vec{p}}{X(\vec{q})}$, and by construction $H$ is compactly supported away from $\partial_{\mathrm{hor}} \D^*L$. The Liouville homotopy in question is $\lambda_t := \lambda_{\mathrm{st}} - t\, dH$ for $0\leq t \leq 1$. We let $\lambda = \lambda_1$ and observe that $X_{\lambda} = \vec{p}\, \partial_{\vec{p}} - X_H$, where $X_H$ is the Hamiltonian vector field of $H$, i.e., the vector field satisfying $\iota_{X_H}d\lambda_{\mathrm{st}} = dH$. 

We prove \eqref{cot_lvf_1}. Writing $X = \sum_{i=1}^n X_i\, \partial_{q_i}$, we have $H(\vec{q}, \vec{p}) = \eta(\vec{p})\sum_{i=1}^n p_iX_i(\vec{q})$. Then
\[
dH = \eta\sum_{i,j=1}^n \delta_{ij}X_i\, dp_j + \sum_{i,j=1}^n\frac{\partial \eta}{\partial p_j}p_iX_i \, dp_j + \eta\sum_{i,j=1}^n p_i\frac{\partial X^i}{\partial q_j}\, dq_j
\]
and
\[
X_H = -\eta X - \sum_{i,j=1}^n \frac{\partial \eta}{\partial p_j}p_iX_i \, \partial_{q_j} + \eta\sum_{i,j=1}^n  p_i\frac{\partial X^i}{\partial q_j}\,\partial_{p_j} .
\]
Along the $0$-section $\{\vec{p} = 0\}$, we see that $X_{\lambda} = -X_H = X$, which gives the first part of \eqref{cot_lvf_1}. For the second part of \eqref{cot_lvf_1}, note that for any $\vec{q}_0$ such that $X(\vec{q}_0) = 0$, the $\partial_{\vec{q}}$-component of $X_H$ vanishes, hence $X_{\lambda}$ is tangent to the cotangent fiber. 

Now consider \eqref{cot_lvf_2}. Given $\eta$ as above, fix $\ve > 0$ and define $\hat{f}(\vec{q}, \vec{p}) := \ve\eta(\vec{p})f(\vec{q}) + \norm{\vec{p}}^2$. By construction, $\hat{f}$ agrees with $\ve f(\vec{q}) + \norm{\vec{p}}^2$ near $\{\vec{p}=0\}$ and with $\norm{\vec{p}}^2$ near $\partial_{\mathrm{hor}} \D^*L$. Note that
\begin{equation}\label{eq:cotangent_bundle}
d\hat{f} = 2\vec{p}\, d\vec{p} + \ve (f\, d\eta + \eta\, df) = \left(2\vec{p} + \ve f\frac{\partial \eta}{\partial \vec{p}}\right)\, d\vec{p} + \ve \eta \, df.
\end{equation}
In order for $d\hat{f}$ to vanish, we need $df$ and $2\vec{p} + \ve f\frac{\partial \eta}{\partial \vec{p}}$ to vanish. Since $\eta \equiv 1$ in a neighborhood of $\vec{p} = 0$, for $\ve$ sufficiently small, the latter occurs only when $\vec{p} = 0$; this shows that the critical points of $\hat{f}$ coincide with those of $f$ along the $0$-section. To show that $X_{\lambda}$ is gradient-like for $\hat{f}$ and to complete the proof of \eqref{cot_lvf_2} it then suffices to show that $d\hat{f}(X_{\lambda}) > 0$ for $\vec{p}\neq 0$. 

Near $\{\vec{p}=0\}$, where $\eta \equiv 1$, we have $d\hat{f} = 2\vec{p}\, d\vec{p} + \ve\, df$ and 
\begin{align*}
    d\hat{f}(X_{\lambda}) &= 2 \vec{p}\, d\vec{p}(X_{\lambda}) + \ve \, df(X_{\lambda}) \\
    &= 2\norm{\vec{p}}^2 + 2\vec{p}\, d\vec{p}(-X_H) + \ve \, df(-X_H).
\end{align*}
In this region, $-X_H = X - \sum_{i,j=1}^n  p_i\frac{\partial X^i}{\partial q_j}\,\partial_{p_j}$. Thus, 
\begin{align*}
    d\hat{f}(X_{\lambda}) &= 2\sum_{i,j=1}^n \delta_{ij} p_ip_j - 2\sum_{i,j=1}^n p_ip_j\frac{\partial X^i}{\partial q_j} + \ve\, df(X) \\
    &= \ve\, df(X) + 2\ip{\vec{p}}{\left(I - \frac{\partial X}{\partial \vec{q}}\right)\vec{p}}.
\end{align*}
Away from zeroes of $X$, we may choose a coordinate system $\vec{q}$ on $L$ such that $\frac{\partial X}{\partial \vec{q}} = 0$, hence $d\hat{f}(X_{\lambda}) = \ve\, df(X) + \norm{\vec{p}}^2 > 0$ since $df(X) \geq 0$ by assumption. At zeroes of $X$, the assumption that the eigenvalues of $X$ have real part $< 1$ implies that the quadratic form  $\ip{\vec{p}}{\left(I - \frac{\partial X}{\partial \vec{q}}\right)\vec{p}}$ is positive definite, hence for $\vec{p}\neq 0$ we have $d\hat{f}(X_{\lambda}) > 0$. 

It remains to show $d\hat{f}(X_{\lambda}) > 0$ in the region where $d\eta \neq 0$. By assumption there is some $\delta > 0$ such that the support of $d\eta$ is contained in $\{\norm{\vec{p}} > \delta\}$. By the first equality in \eqref{eq:cotangent_bundle}, we have 
\[
d\hat{f}(X_{\lambda}) = 2\vec{p}\, d\vec{p}(X_{\lambda}) + \ve(f\, d\eta + \eta\, df)(X_{\lambda}) > 2\delta^2 + \ve(f\, d\eta + \eta\, df)(X_{\lambda}).
\]
We thus ensure $d\hat{f}(X_{\lambda}) > 0$ by taking $\ve > 0$ sufficiently small. 

Finally, for the ``in particular'' statement, note that if $f:L \to \R$ is Morse, then $\hat{f}(\vec{q}, \vec{p}) := \ve\eta(\vec{p})f(\vec{q}) + \norm{\vec{p}}^2$ as defined above is Morse. Indeed, the critical points coincide with those of $f$ along the $0$-section, where $\hat{f}$ agrees with $\ve f(\vec{q}) + \norm{\vec{p}}^2$, hence each critical point is nondegenerate and in fact has the same index as a critical point of $f$. By \eqref{cot_lvf_2}, it suffices to exhibit a gradient-like vector field $X$ on $L$ for $f$ whose eigenvalues at zeroes have real part $<1$. We can take $X = \nabla f$ and perform a local modification near zeroes if necessary. 
\end{proof}

Any Weinstein structure arising via \cref{lemma:cot_bund_str} is known as the \textit{standard} Weinstein structure on the cotangent bundle; there are other exotic Weinstein structures on cotangent bundles that are not even symplectomorphic to the standard one \cite[Theorem 4.7]{eliashberg2018flexiblelagrangians}. 

\subsubsection{Handlebody diagrams and Legendrian Kirby calculus} A regular level set $Y=\phi^{-1}(c)$ of a Weinstein cobordism naturally inherits a contact structure, and in fact, a contact form $\alpha = \lambda\mid_Y$. The following elementary fact is used implicitly throughout the article.

\begin{lemma}
Let $(Y, \ker\alpha)$ be a contact manifold and $(\R\times Y, \lambda=e^s\, \alpha)$ its symplectization. Let $\alpha'$ be any other contact form for $\ker \alpha$. Then there is a function $f:Y \to \R$ such that $\lambda\mid_{Y'} = \alpha'$, where $Y'=\{s=f\}$ is graphical in the symplectization over $Y$.      
\end{lemma}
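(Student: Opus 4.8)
The plan is to explicitly construct the function $f$ and verify that the graph $Y' = \{s = f\}$, with the ambient Liouville form restricted to it, yields exactly $\alpha'$. Since $\alpha'$ is another contact form for the same co-oriented plane field $\ker\alpha$, there is a smooth positive function $g: Y \to \R_{>0}$ with $\alpha' = g\,\alpha$; set $f := \log g$, so $g = e^f$. I would then describe the graphical copy $Y'$ as the image of the embedding $\iota_f: Y \to \R \times Y$, $y \mapsto (f(y), y)$, and compute the pullback $\iota_f^*\lambda$.

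The key computation is as follows: writing $\pi: \R \times Y \to Y$ for the projection and $\sigma: \R\times Y \to \R$ for the $s$-coordinate, we have $\lambda = e^s\,\pi^*\alpha$. Pulling back along $\iota_f$, and using $\sigma \circ \iota_f = f$ and $\pi\circ\iota_f = \mathrm{id}_Y$, gives $\iota_f^*\lambda = e^{f}\,\alpha = g\,\alpha = \alpha'$. Here the only subtle point worth spelling out is that the exterior-derivative term one might expect from differentiating $e^{s}$ along $Y'$ does not contribute: $\alpha$ is $\pi^*\alpha$, which annihilates nothing extra because we are pulling back a $1$-form (not differentiating), so there is genuinely no $df$ correction — the restriction $\lambda\mid_{Y'}$ literally means $\iota_f^*\lambda$, and the computation is a one-line substitution. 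I would phrase the identification $\lambda\mid_{Y'} = \alpha'$ via the diffeomorphism $\iota_f: Y \xrightarrow{\sim} Y'$, under which these forms correspond.

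Finally I would note that $Y'$ is indeed a regular level set type hypersurface — it is the graph of a smooth function, hence a smooth embedded copy of $Y$ transverse to the $\R$-direction — so the statement "$Y'$ is graphical in the symplectization over $Y$" is immediate from the construction. I do not anticipate a genuine obstacle here; the statement is essentially a bookkeeping lemma recording that rescaling a contact form corresponds to vertically translating the slice in the symplectization, and the "proof" is the short pullback computation above together with the observation that $g > 0$ guarantees $f = \log g$ is globally well-defined and smooth on all of $Y$. The only thing to be careful about is orientations/co-orientations: since $g$ is positive, the graph $Y'$ inherits the same co-orientation, so $\lambda\mid_{Y'} = \alpha'$ as co-oriented contact forms, not merely up to sign.
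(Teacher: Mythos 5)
Your proof is correct and is exactly the standard argument the paper has in mind: the paper states this lemma as an elementary fact without giving a proof, and your computation (write $\alpha' = e^f\alpha$ with $f=\log g$ well-defined by positivity of the conformal factor, then pull back $e^s\,\pi^*\alpha$ along the graph embedding, noting no $df$ correction appears since one is pulling back a $1$-form rather than differentiating) is the intended one-line verification. Your remark about co-orientation is the right caveat to flag, and is consistent with the paper's standing convention of co-oriented contact structures.
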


As we work with completed Weinstein cobordisms, this lemma (with a corresponding adjustment of the Weinstein Morse function $\phi$ to make $Y'$ a regular level set) allows us to do contact topology in level sets at the level of structure, rather than form. This leads to contact topological diagrammatic presentations of Weinstein cobordisms; see \cite{gompf1998handlebody}, and \cite{rizell2024instabilitylegendrianknottednessnonregular} for a recent treatment. 

Kirby diagrams for Weinstein cobordisms involve only $1$-handles and $2$-handles. Weinstein $1$-handles may either be drawn in \textit{Gompf standard form} by their double-ball attaching region,\footnote{Here there is more subtlety than in the smooth case; we defer to \cite{gompf1998handlebody} for a thorough discussion.} or, by \cite{Ding2009HandleMI}, as a max-tb unknot with contact $(+1)$-framed (i.e. smooth $0$-framed) surgery. For consistency with smooth Kirby calculus, in the latter formulation we dot the $1$-handle circles, though this does not appear to be a common convention in the symplectic literature. See \cref{fig:1handle}. Weinstein $2$-handles are attached along Legendrians and have contact framing $(-1)$. 

\begin{figure}[ht]
	\begin{overpic}[scale=.33]{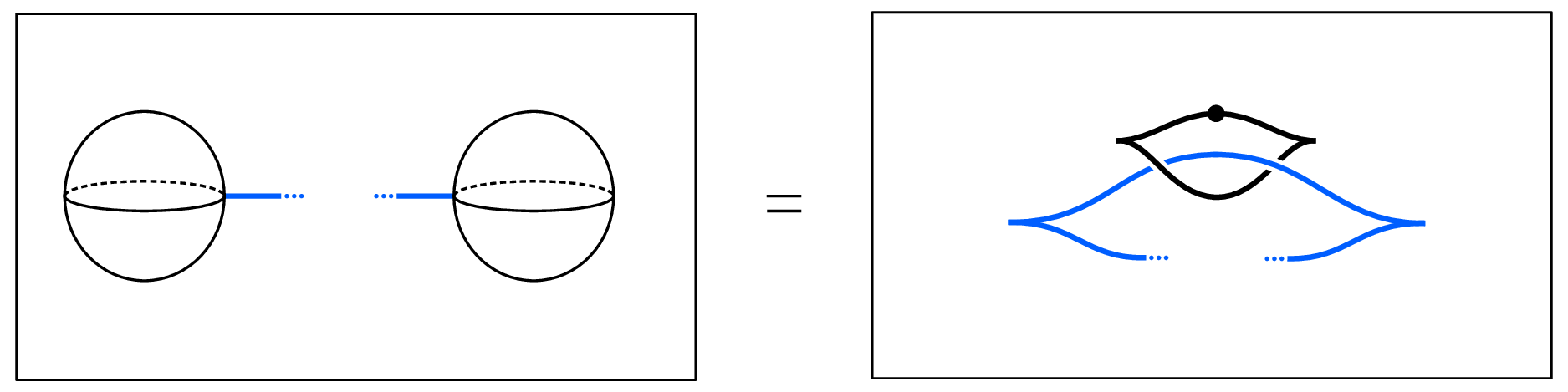}   
    \put(85,15.25){\scriptsize $(+1)$}
	\end{overpic}
	\caption{Weinstein $1$-handles.}
	\label{fig:1handle}
\end{figure}

Local models for handle slides over $(\pm 1)$-framed surgeries are given in \cref{fig:slidemodel}, and such handle slides preserve the contact-$(\pm 1)$ framing discussed above. Transforming a diagram by Legendrian isotopies, handle slides, and births/deaths of $1$-/$2$-handle pairs amounts to homotoping the underlying Weinstein structure. 

\begin{figure}[ht]
	\begin{overpic}[scale=.33]{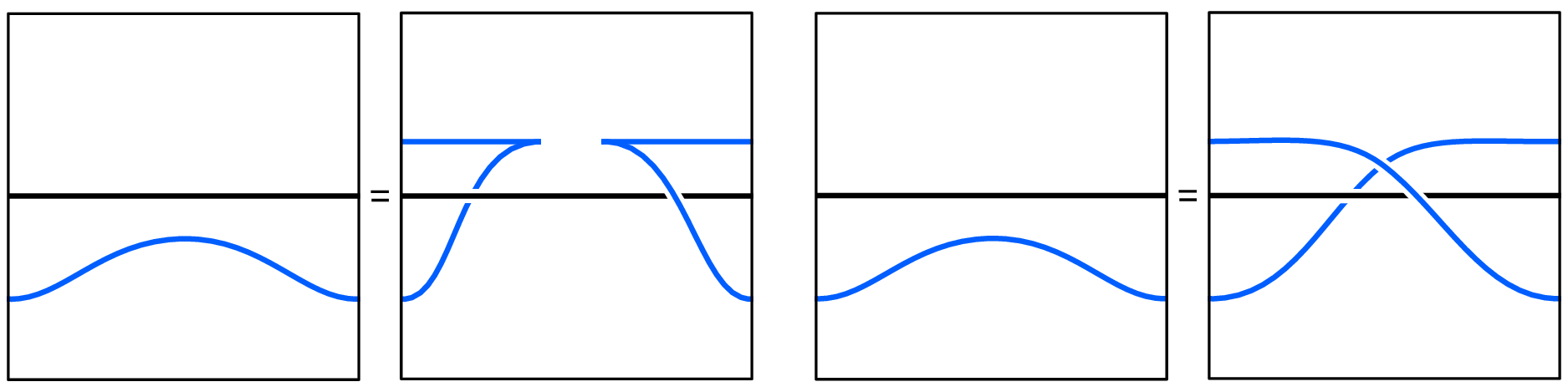}   
    \put(95.25,13.75){\tiny $(+1)$}
    \put(69.85,13.75){\tiny $(+1)$}

    \put(43.75,13.75){\tiny $(-1)$}
    \put(18.25,13.75){\tiny $(-1)$}
	\end{overpic}
	\caption{Legendrian handle slides.}
	\label{fig:slidemodel}
\end{figure}

\subsection{Regular Lagrangians}\label{subsec:reg_review}

\begin{definition}\cite{eliashberg2018flexiblelagrangians}
A properly embedded Lagrangian cobordism $L\subset (W, \lambda, \phi)$ in a Weinstein cobordism is \textbf{regular} if there is a Weinstein homotopy $(\lambda, \phi) \rightsquigarrow (\lambda',\phi')$ such that $L$ is Lagrangian throughout the homotopy, and such that the Liouville vector field $X_{\lambda'}$ is everywhere tangent to $L$. 
\end{definition}

There are a number of useful properties and characterizations of regularity, but the most important for us is the fact that, informally, $L$ is regular if and only if the Weinstein cobordism admits a presentation as $T^*L \cup W_L$, where $W_L$ is a \textit{complementary} Weinstein cobordism. The following lemma gives a precise formulation. It is \cite[Lemma 2.2]{eliashberg2018flexiblelagrangians}, but rephrased to clarify the Weinstein structure in the case of nonempty negative ends.

\begin{lemma}\label{lemma:regular_cot_decomp}
Let $L\subset (W, \lambda, \phi)$ be a regular Lagrangian cobordism in a Weinstein cobordism. Assume $\partial_- W \neq \emptyset$ and let $\alpha = \lambda\mid_{\partial_- W}$. Then $(W, \lambda, \phi)$ is deformation equivalent to
\[
([-\ve, 0] \times \partial_- W) \cup \D^*L \cup W_L
\]
where $([-\ve, 0] \times \partial_- W, e^s\, \alpha, \phi)$ is a small symplectization, $(\D^*L, \lambda_{\mathrm{st}}, \phi_{\mathrm{st}})$ is the standard Weinstein structure (where $\D^*L$ is a Weinstein domain if $\partial_- L = \emptyset$, and $\D^*L$ is a Weinstein cobordism with corners if $\partial_- L \neq \emptyset$), and $(W_L, \lambda_L, \phi_L)$ is a Weinstein cobordism whose attaching locus is disjoint from $\partial_+ L$. Moreover, we identify $L$ with the $0$-section of $\D^*L$ when $\partial_- L = \emptyset$, and with the $0$-section of $\D^*L$ together with a collar $[-\ve, 0]\times \partial_- L \subset [-\ve, 0] \times \partial_- W$ when $\partial_- L \neq \emptyset$.
\end{lemma}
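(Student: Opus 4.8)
The plan is to follow the proof of \cite[Lemma 2.2]{eliashberg2018flexiblelagrangians}, keeping careful track of the collar of $\partial_- W$ and of the corners of $\D^*L$ over $\partial_- L$. First I would invoke the definition of regularity to replace $(\lambda,\phi)$ by a Weinstein-homotopic structure, still denoted $(\lambda,\phi)$, for which $X_\lambda$ is everywhere tangent to $L$; this does not change the deformation equivalence class. Since $X_\lambda$ is tangent to $L$ and $L$ is Lagrangian, for $v\in TL$ we get $\lambda(v)=d\lambda(X_\lambda,v)=0$, so $\lambda|_L\equiv 0$ and $X_\lambda|_L$ is the honest gradient-like vector field of $\phi|_L$ on $L$. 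A further Weinstein homotopy supported near $L$ then arranges that $\phi|_L$ is Morse, that the eigenvalues of $X_\lambda|_L$ at its zeros have real part $<1$ (rescale near zeros), and that near $\partial_- W$ the structure is the product symplectization $e^s\alpha$ on $[-\varepsilon,0]\times\partial_- W$ with $L$ cylindrical over $\Lambda_-=\partial_- L$; each of these is a standard local normalization.

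Next I would produce the cotangent model near $L$. Apply \cref{lemma:cot_bund_str} to the cobordism $L$ with the vector field $X:=X_\lambda|_L$ and function $f:=\phi|_L$ (rescaled so that $f(L)$ lies in a tiny interval), obtaining the standard Weinstein structure $(\D^*L,\lambda_{\mathrm{st}},\hat f)$ whose Liouville field agrees with $X_\lambda$ along the zero section and whose Morse function agrees with $\varepsilon\phi|_L+\|\vec p\|^2$ there. A relative Weinstein neighborhood theorem gives a diffeomorphism $\Phi$ from a neighborhood of the zero section in $\D^*L$ onto a neighborhood $N$ of $L$ in $W$ with $\Phi^* d\lambda=d\lambda_{\mathrm{st}}$ and $\Phi|_{0\text{-section}}=\mathrm{id}_L$. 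Since both $\lambda_{\mathrm{st}}$ and $\Phi^*\lambda$ vanish on the zero section and have the same differential, $\Phi^*\lambda-\lambda_{\mathrm{st}}=dg$ for a function $g$ vanishing to second order along $L$; the Liouville homotopy $\lambda_{\mathrm{st}}+t\,dg$ fixes $d\lambda_{\mathrm{st}}$, is supported near $L$, and stays gradient-like for a homotopy of $\hat f$ because $g$ (hence $X_g$) is small, so a standard Moser argument upgrades $\Phi$ to an embedding carrying $\lambda_{\mathrm{st}}$ to $\lambda$ on a possibly smaller neighborhood. A final compactly supported deformation near the horizontal boundary, exactly as in \cref{lemma:cot_bund_str}, makes $\Phi$ an honest Weinstein embedding of $(\D^*L,\lambda_{\mathrm{st}},\hat f)$ onto $N$. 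Near $\partial_- L$ this $N$ is identified, inside the product collar $[-\varepsilon,0]\times\partial_- W$, with a standard contact neighborhood of $\Lambda_-$, which is precisely the vertical boundary $\D^*L|_{\partial_- L}$ of $\D^*L$ over $\partial_- L$; this accounts for the collar $[-\varepsilon,0]\times\partial_- L$ appearing in the statement.

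Finally I would split off the complement. Because $\hat f=\|\vec p\|^2$ and $X_{\lambda_{\mathrm{st}}}=\vec p\,\partial_{\vec p}$ near the horizontal boundary $\mathbb{S}^*L$, that boundary is $\phi$-convex, so $W_L:=\overline{W\setminus(N\cup([-\varepsilon,0]\times\partial_- W))}$ inherits a Weinstein cobordism structure from $(\lambda,\phi)$ with negative boundary containing $\mathbb{S}^*L$ (rounded together with the corners over $\partial_- L$) and positive boundary $\partial_+ W$; here one uses that outside $N$ the function $\phi$ has no critical points on $L$ and that $N$ was arranged to be $\phi$-sublevel-compatible near the rounding locus. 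Since $L\subset N$, the attaching spheres of the handles of $W_L$ are disjoint from $L$, in particular from $\partial_+ L$. Reassembling $W=([-\varepsilon,0]\times\partial_- W)\cup\D^*L\cup W_L$ yields the claimed decomposition up to Weinstein homotopy, and hence—after composing with the diffeomorphisms implicit in the neighborhood theorem and in the rounding of corners—up to deformation equivalence. The step I expect to be the main obstacle is the relative Weinstein neighborhood theorem in the middle paragraph: upgrading the purely symplectic neighborhood identification to one intertwining the Liouville forms and compatible with the Morse functions while simultaneously controlling the corner locus $\mathbb{S}^*L|_{\partial_- L}$ and matching the product collar with the vertical boundary of $\D^*L$. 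Verifying that the Moser-type homotopy stays gradient-like, so that no spurious critical points appear and the decomposition is genuinely Weinstein rather than merely Liouville, is the delicate point, and is where the smallness built into \cref{lemma:cot_bund_str} is used.
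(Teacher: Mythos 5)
Your proposal is correct in outline but routes through a different mechanism than the paper. The paper's proof (like \cite[Lemma 2.2]{eliashberg2018flexiblelagrangians}, to which it defers for details) stays entirely inside the given Weinstein homotopy class: it observes that critical points of $\phi'\vert_L$ are critical points of $\phi'$, births auxiliary critical points so that the indices along $L$ agree with the ambient indices, and then reorders critical values so that all handles corresponding to $L$ are attached before any others; the sublevel piece carrying exactly the $L$-handles \emph{is} $\D^*L$ (glued to the collar of $\partial_-W$ along $[-\ve,0]\times\partial_-L$ when $\partial_-L\neq\emptyset$), and the remaining handles form $W_L$, with disjointness of the attaching locus from $\partial_+L$ by general position. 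You instead build $\D^*L$ extrinsically, via \cref{lemma:cot_bund_str} plus a relative Weinstein neighborhood theorem and a Moser argument, and then split off the complement. Both are legitimate; your route makes the identification of $L$ with the $0$-section very concrete and localizes the corner bookkeeping over $\partial_-L$, while the paper's route avoids the Moser step entirely and makes the "attach $\D^*L$ first, then $W_L$" structure automatic rather than something to be verified. One point you should make explicit rather than leave implicit in the interpolation: a critical point of $\phi\vert_L$ can have strictly larger index as a critical point of $\phi$ on $W$ than as a critical point of $\phi\vert_L$ (the transverse Hessian directions need not all be positive), so matching your model function $\hat f\circ\Phi^{-1}$, whose fiber directions contribute positively, to the ambient $\phi$ genuinely requires births/deaths of critical points during the homotopy — this is exactly the "birthing additional critical points" step the paper names, and it is not a consequence of smallness alone. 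Your parenthetical "rescaled so that $f(L)$ lies in a tiny interval" also quietly presupposes the critical-value reordering on all of $W$ (no gradient trajectory descending from a critical point off $L$ to one on $L$ obstructs it), which is the same general-position input the paper uses.
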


\begin{proof}
We refer to \cite{eliashberg2018flexiblelagrangians} for details but sketch the idea. By regularity, critical points of $\phi'\mid_L$ are in fact critical points of $\phi'$. Moreover, by birthing additional critical points, one can arrange for all critical points of $\phi'\mid_L$ to have the same index as critical points of $\phi'$. The values of the critical points along $L$ can further be adjusted so that all of the handles corresponding to $L$ are attached before any additional Weinstein handles. 

If $\partial_- L = \emptyset$, then the Weinstein handles corresponding to $L$ present $\D^*L$ as a Weinstein domain with $L$ as the $0$-section. We thus begin building the cobordism with a disjoint union of $\D^*L$ and a collar neighborhood of $\partial_- W$ with no critical values. If $\partial_- L \neq \emptyset$, then the symplectization $[-\ve, 0]\times \partial_- W$ contains a collar neighborhood $[-\ve, 0]\times \partial_- L$, and the Weinstein handles corresponding to $L$ attach $\D^*L$ as a cobordism with corners. In either case, the additional Weinstein handles give the cobordism $W_L$ and one can arrange by a general position argument for the attaching locus to be disjoint from $\partial_+ L$. 
\end{proof}

\subsection{Normal rulings}\label{subsec:NR_review}

Let $\Lambda\subset (\R^3, \ker (dz-y\, dx))$ be a Legendrian link. Assume that all cusps and crossings of the front projection $\Pi_{xz}(\Lambda)$ have distinct $x$-coordinates, which can be achieved by a generic perturbation. A \textit{normal ruling} of the front $\Pi_{xz}(\Lambda)$ is a subset of the crossings, called \textit{switched crossings}, or just \textit{switches}, satisfying the following properties. 
\begin{enumerate}
    \item Performing $0$-resolutions of all switched crossings yields a link projection with each component a max-tb unknot in its standard $2$-cusp projection. 

    \item Each switch is \textit{incident} to exactly two link components in the $0$-resolution; that is, each strand of the $0$-resolution near a switch belongs to a different component of the link.  

    \item If $\Lambda_1$ and $\Lambda_2$ are two such components incident to a switched crossing with $x$-coordinate $x_0$, then for $(x_0-\ve, x_0+\ve)$ the pre-resolution configuration of the paths in the front associated to $\Lambda_1$ and $\Lambda_2$ satisfies the \textit{normality} condition given by one of the top two left panels (and their horizontal reflections) of \cref{fig:rulingdef}. That is, near the switched crossing, the \textit{ruling disks} enclosed by $\Pi_{xz}(\Lambda_1)$ and $\Pi_{xz}(\Lambda_2)$ are either disjoint or nested.
\end{enumerate}

\begin{figure}[ht]
	\begin{overpic}[scale=.33]{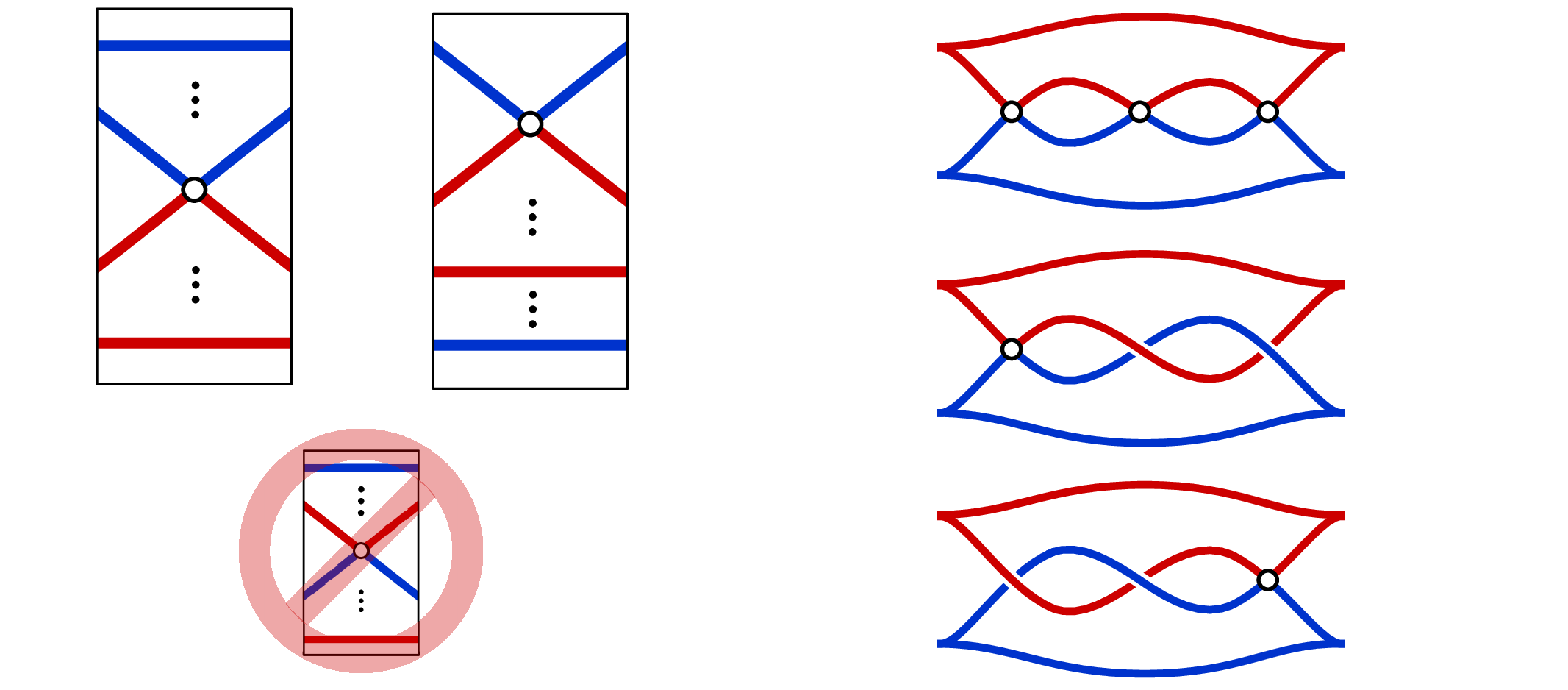}   
    
	\end{overpic}
	\caption{The normality condition near switched crossings on the left, and the three normal rulings of the standard front projection of the trefoil on the right.}
	\label{fig:rulingdef}
\end{figure}

We call the paths in the front that enclose a ruling disk \textit{companion paths} and say that they are \textit{paired} by the ruling, and, although it is unnecessary, we color companion paths when visualizing a ruling. We will abuse language and refer to a normal ruling of $\Lambda$, rather than of its front projection. 

An alternative way to describe a normal ruling is as a fixed-point free involution on sections of a front projection. We only adopt this perspective in \cref{sec:sat} when we consider pattern links of $2k$-many parallel strands; in this case, a normal ruling is simply a pairing of the strands via the involution. For details on this perspective we refer to \cite{chekanov2007pushkar}. 

Not every Legendrian admits a normal ruling. By \cite{fuchs2003rulings,fuchs2004invariants,sabloff2005augmentations}, existence of a normal ruling of a knot is equivalent to existence of an augmentation of the Chekanov-Eliashberg DGA; in particular, every fillable Legendrian admits a normal ruling. Counts of rulings are a Legendrian isotopy invariant, and as the $2$-cusp projection of the max-tb unknot has a unique normal ruling, the same can be said of any of its front projections. Finally, if $\Lambda_-$ admits a normal ruling and $\Lambda_+$ is obtained by one of the decomposable moves (unknot birth or Legendrian surgery), the ruling of $\Lambda_-$ extends to a ruling of $\Lambda_+$ in a unique way.

\subsection{Convex surface theory}\label{subsec:CHT_review}

Convex surface theory in contact topology was first introduced by Giroux \cite{giroux1991convexite}.

\begin{definition}
A surface $\Sigma\subset (Y, \xi)$, either closed or with Legendrian boundary, is \textbf{convex} if there is a contact vector field $X$, i.e. a vector field satisfying $\mathcal{L}_X\xi = \xi$, everywhere transverse to $\Sigma$. The \textbf{dividing set} is the embedded $1$-manifold in $\Sigma$ given by $\Gamma:=\{X \in \xi\}$.    
\end{definition}

Although the dividing set depends on the choice of contact vector field $X$, its isotopy class in $\Sigma$ does not. In fact, $\Gamma\subset (Y, \xi)$ is transverse to the contact structure, hence transverse to the characteristic foliation of $\Sigma$, and its transverse isotopy class in $\Sigma$ is well-defined. If $\alpha$ is any contact form for $\xi$, then $\Gamma = \{\alpha(X) = 0\}$ and $\Gamma$ splits $\Sigma$ into two regions $R_{\pm} = \{\pm \alpha(X) > 0\}$. 

\begin{theorem}\cite{giroux1991convexite,honda2000classification}
Let $\Sigma\subset (S^3, \xi)$ be an orientable surface with Legendrian boundary satisfying $\mathrm{tb}(\Lambda) \leq 0$. There is a perturbation of $\Sigma$ relative to $\partial \Sigma$, $C^0$-small near $\partial \Sigma$ and $C^{\infty}$-small in the interior, such that the resulting surface is convex. Moreover, $\mathrm{tb}(\Lambda) = -\frac{1}{2}|\Gamma \cap \Lambda|$. 
\end{theorem}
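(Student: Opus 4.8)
The plan is to handle the Legendrian boundary and the interior of $\Sigma$ separately. Recall that convexity is detected by Giroux's criterion: $\Sigma$ is convex precisely when its characteristic foliation $\Sigma_\xi$ (the singular line field $T\Sigma\cap\xi$) has only nondegenerate singular points and closed orbits and contains no \emph{retrograde} saddle--saddle connection --- equivalently, when $\Sigma_\xi$ admits a dividing set. So the goal is to perturb $\Sigma$ until $\Sigma_\xi$ has this form while keeping $\partial\Sigma=\Lambda$ fixed.

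First I would normalize a collar of $\partial\Sigma$. Using the Legendrian neighborhood theorem, fix a standard contact neighborhood of $\Lambda$ in which the framing induced by $\Sigma$ is a product framing; there $\Sigma$ appears as a collar $\Lambda\times[0,1)$ and generically $\Lambda$ is a union of closed leaves of $\Sigma_\xi$. Since $\mathrm{tb}(\Lambda)\le 0$ --- with $\mathrm{tb}$ measured against the framing induced by $\Sigma$, which in $S^3$ is the Seifert framing (equivalently the $0$-framing) --- one may perturb this collar to one of the standard convex models of Honda: there is a contact vector field transverse to the collar, $\Sigma_\xi$ has only nondegenerate singularities on it, the dividing set $\Gamma$ meets $\Lambda$ transversally in $-2\,\mathrm{tb}(\Lambda)=2|\mathrm{tb}(\Lambda)|$ points alternating between $R_+$ and $R_-$, and near each such point $\Sigma_\xi$ is in a fixed normal form (when $\mathrm{tb}(\Lambda)=0$ the model is the one in which $\Lambda$ remains a single closed leaf). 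This collar step is where $\mathrm{tb}(\Lambda)\le 0$ is genuinely needed --- for $\mathrm{tb}(\Lambda)>0$ no such collar exists --- and it is the reason the perturbation is only $C^0$-small, not $C^1$-small, near $\partial\Sigma$, since tangencies with $\Lambda$ must be manufactured. Fix this collar.

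Next, working rel the fixed collar, I would apply Giroux's genericity theorem in the interior: a $C^\infty$-small perturbation of $\Sigma$ supported away from $\partial\Sigma$ makes all singular points and closed orbits of $\Sigma_\xi$ on the interior nondegenerate (a residuality argument of Peixoto type, run relative to the normalized collar). It then remains to remove any retrograde saddle--saddle connections; each such connection is destroyed by a further $C^\infty$-small perturbation localized near the offending flow line, via Giroux's elimination lemma to cancel elliptic--hyperbolic pairs and to slide singularities, again rel the collar and preserving nondegeneracy. After finitely many such moves $\Sigma_\xi$ admits a dividing set, so the perturbed surface --- isotopic to $\Sigma$, since the total perturbation is small --- is convex, and its dividing set meets $\partial\Sigma$ exactly in the points arranged in the collar step.

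Finally, the ``moreover'' follows by a direct computation in the standard collar: between consecutive points of $\Gamma\cap\Lambda$ the curve $\Lambda$ lies in a single region $R_\pm$ and the contact planes turn by a half-twist relative to $T\Sigma$, so summing over the $|\Gamma\cap\Lambda|$ arcs shows the contact framing differs from the surface framing by $-\frac{1}{2}|\Gamma\cap\Lambda|$, which in $S^3$ is precisely $\mathrm{tb}(\Lambda)$. I expect the collar normalization in the second paragraph to be the main obstacle --- it is the only place the hypothesis $\mathrm{tb}(\Lambda)\le 0$ enters, and it forces the $C^0$-but-not-$C^1$ behavior near the boundary --- together with the bookkeeping needed to run Giroux's genericity and elimination lemmas relative to that fixed collar.
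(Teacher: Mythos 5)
The paper does not prove this statement; it is quoted as background from the cited references of Giroux and Honda (the $\mathrm{tb}$ formula being Kanda's), so there is no in-paper argument to compare against. Your sketch is a faithful outline of the standard proof from those sources --- normalize an annular collar of the Legendrian boundary to Honda's standard form (the only step using $\mathrm{tb}(\Lambda)\le 0$ and the only step that is merely $C^0$-small), then make the characteristic foliation generic in the interior rel the collar via Giroux's genericity and elimination lemmas, and read off $\mathrm{tb}(\Lambda)=-\tfrac12|\Gamma\cap\Lambda|$ from the twisting of $\xi$ relative to $T\Sigma$ between consecutive dividing points --- so it is correct and consistent with the argument the citations supply.
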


\begin{example}
Let $U\subset (Y, \xi)$ be a max-tb unknot in a tight contact manifold. Then any convex Seifert disk bounding $U$ has one dividing curve. Existence of closed embedded dividing circles is obstructed by Giroux's criterion for tightness \cite[Theorem 3.5]{honda2000classification}.   
\end{example}

Recall that there is a support-respecting correspondence between contact vector fields and smooth contact Hamiltonian functions \cite{geiges2008introduction}. Fixing a contact form $\alpha$ and given a contact vector field $X$, the corresponding smooth function is $H := \alpha(X)$. Conversely, given $H:Y \to \R$, the corresponding contact vector field is $X = H\, R_{\alpha} + Z$ where $Z\in \xi$ is the unique vector field such that $\iota_Zd\alpha\vert_{\xi} = -dH\vert_{\xi}$. This gives a significant amount of control over neighborhoods of convex surfaces. In particular, the following lemma says that every convex surface has an infinite $\R$-invariant neighborhood $\Sigma \times \R$ inside any arbitrarily small neighborhood in $Y$. 

\begin{lemma}\label{lemma:nbd_size}
Let $\Sigma \subset (Y, \xi)$ be a convex surface, possibly with boundary. Let $\mathcal{U} \subset Y$ be any open neighborhood of $\Sigma$. There is a contact embedding 
\[
(\Sigma \times \R_t, \ker \alpha_{\mathrm{inv}})\hookrightarrow (\mathcal{U}, \xi)
\]
mapping $\Sigma\times \{0\}$ to $\Sigma \subset \mathcal{U}$, such that 
\begin{enumerate}
    \item $\partial_t$ is a strict contact vector field, so that $\Sigma\times \{0\} \subset \Sigma \times \R$ is convex and the form $\alpha_{\mathrm{inv}}$ is $\R$-invariant, and 
    \item the dividing set of $\Sigma \times \{0\}$ is mapped to the dividing set of $\Sigma\subset \mathcal{U}$. 
\end{enumerate}
\end{lemma}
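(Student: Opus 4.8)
The plan is to build the invariant neighborhood directly from the contact-Hamiltonian correspondence and the flexibility in choosing the contact vector field transverse to $\Sigma$. First I would fix a contact form $\alpha$ for $\xi$ and, using convexity of $\Sigma$, choose a contact vector field $X$ transverse to $\Sigma$ with $H := \alpha(X)$ the associated contact Hamiltonian; by shrinking, the flow of $X$ is defined on $\Sigma \times (-\delta, \delta)$ for some $\delta > 0$ and, after further shrinking $\delta$, its image lies inside the prescribed neighborhood $\mathcal{U}$. Pulling back $\alpha$ by this flow yields a contact form on $\Sigma \times (-\delta, \delta)$ for which $\partial_t$ is a contact vector field (not yet strict), and the dividing set $\Gamma = \{H = 0\} = \{X \in \xi\}$ of $\Sigma \times \{0\}$ is by construction carried to the dividing set of $\Sigma \subset \mathcal{U}$, giving item (2) essentially for free.

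The next step is to promote this to a \emph{strictly} $\R$-invariant model on all of $\Sigma \times \R$. Here I would replace $H$ by a function $\tilde H$ that agrees with $H$ to first order along $\Sigma \times \{0\}$ (so that the new contact vector field is still transverse to $\Sigma$ with the same dividing set) but which one can integrate for all time inside the chart. Concretely, one exploits the fact that the contact structure near a convex surface has a normal form $\ker(\beta + u\, dt)$ on $\Sigma \times \R_t$ where $\beta$ is a ($t$-independent) $1$-form on $\Sigma$ and $u: \Sigma \to \R$ with $\Gamma = \{u = 0\}$: this is the standard convex-neighborhood theorem (Giroux), and $\ker(\beta + u\, dt)$ is genuinely $\R$-invariant with $\partial_t$ a strict contact vector field. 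So the real content is a normal-form / Moser-type argument identifying a small genuine neighborhood of $\Sigma$ with a piece $\Sigma \times (-\delta', \delta')$ of this invariant model, via a contactomorphism fixing $\Sigma \times \{0\}$ and respecting the dividing set; then rescaling the $t$-coordinate (which preserves the contact condition for $\ker(\beta + u\, dt)$ since it is invariant) stretches $(-\delta', \delta')$ to all of $\R$, landing inside $\mathcal{U}$.

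The main obstacle is the passage from a merely contact-invariant structure (where $\partial_t$ preserves $\xi$ but not $\alpha$) to a \emph{strictly} invariant contact form $\alpha_{\mathrm{inv}}$ — i.e. getting $\mathcal{L}_{\partial_t}\alpha_{\mathrm{inv}} = 0$ rather than $\mathcal{L}_{\partial_t}\alpha_{\mathrm{inv}} = g\,\alpha_{\mathrm{inv}}$ — while simultaneously keeping control of the dividing set and staying inside $\mathcal{U}$. This is handled by the Giroux normal form: one shows any contact germ along a convex $\Sigma$ is contactomorphic to $\ker(\beta + u\,dt)$ for suitable $(\beta, u)$ on $\Sigma$, and for this explicit model $\alpha_{\mathrm{inv}} := \beta + u\,dt$ is literally $t$-independent, so $\partial_t$ is strict and the $\R$-invariance is automatic; the rescaling $t \mapsto \lambda t$ preserves $\ker(\beta + \lambda^{-1} u\, dt)$ up to the harmless replacement $u \mapsto \lambda^{-1} u$, which does not change $\Gamma = \{u = 0\}$, so we may extend the embedding to all of $\Sigma \times \R$ with image contained in the original small chart inside $\mathcal{U}$. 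The remaining verifications — that the embedding sends $\Sigma \times \{0\}$ to $\Sigma$, that $\partial_t$ is a strict contact vector field, and that dividing sets match — are then immediate from the construction.
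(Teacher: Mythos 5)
Your first two steps are fine and essentially standard: the short-time flow of a transverse contact vector field identifies a small collar of $\Sigma$ inside $\mathcal{U}$, and the Giroux normal form puts the contact structure there in the $t$-independent shape $\ker(\beta + u\,dt)$ on $\Sigma \times (-\delta',\delta')$ with $\Gamma = \{u=0\}$. The gap is in the final step, which is where the actual content of the lemma lives (the point being that the \emph{infinite} invariant model fits inside an \emph{arbitrarily small} neighborhood). A rescaling $t \mapsto \lambda t$ with $\lambda$ constant maps a bounded interval to another bounded interval; it can never carry $\R$ into $(-\delta',\delta')$. If you instead use a non-affine reparametrization $f:\R \to (-\delta',\delta')$, the pullback of $\beta + u\,dt$ under $(p,t)\mapsto (p,f(t))$ is $\beta + u f'(t)\,dt$, whose kernel is still a contact structure but is no longer $t$-invariant (and $\mathcal{L}_{\partial_t}$ of the form is $uf''\,dt \neq 0$), so you lose exactly the conclusion you are trying to prove. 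The underlying obstruction is that the contactomorphism from $\Sigma\times\R$ onto a finite collar cannot be purely vertical: it necessarily moves points in the $\Sigma$-directions as well.

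The fix — and the paper's actual (sketched) proof — is to cut off the contact Hamiltonian rather than the coordinate: multiply $H = \alpha(X)$ by a bump function that is identically $1$ near $\Sigma$ and compactly supported in $\mathcal{U}$. The correspondence between contact Hamiltonians and contact vector fields returns a contact vector field that agrees with $X$ near $\Sigma$ (hence is transverse to $\Sigma$ with the same dividing set $\{H=0\}\cap\Sigma$) and is compactly supported, hence complete; by uniqueness of integral curves, trajectories starting on $\Sigma$ remain in the support for all time. Flowing for all $t\in\R$ then embeds $\Sigma\times\R$ into $\mathcal{U}$ with an $\R$-invariant pulled-back contact structure, for which one chooses an invariant contact form $\alpha_{\mathrm{inv}} = \beta + u\,dt$. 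Note that the cut-off contact vector field is \emph{not} simply a pointwise rescaling of $X$ (the Hamiltonian correspondence contributes a term involving $d\chi$ lying in $\xi$), which is precisely why this construction succeeds where a reparametrization of the $t$-coordinate alone cannot.
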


\begin{proof}[Proof idea.]
Since $\Sigma$ is convex, there is a contact vector field transverse to $\Sigma$. Let $H:Y \to \R$ be the corresponding contact Hamiltonian function. Multiply $H$ by a bump function identically $1$ near $\Sigma$ and compactly supported in $\mathcal{U}$. The flow of the resulting contact vector field then gives the desired embedding.      
\end{proof}

We can further localize the contact form near the dividing set.

\begin{lemma}\cite{giroux1991convexite}\label{lemma:dividingsetnormal}
Let $\Sigma \subset (Y, \xi)$ be a convex hypersurface with dividing set $\Gamma$. Let $\iota:\Gamma \to Y$ be the inclusion. After a contact isotopy rel $\Sigma$, there are coordinates on a neighborhood $N(\Gamma) \cong (-\ve, \ve)_{\tau} \times \Gamma$ and a contact form $\alpha$ on $N(\Gamma) \times \R_t$ such that 
\begin{equation}
    \alpha = -\tau\, dt + \alpha_{\Gamma}
\end{equation}
where $\alpha_{\Gamma}:= \iota^*\alpha$ is the induced contact form on $(\Gamma, \xi_{\Gamma})$. 
\end{lemma}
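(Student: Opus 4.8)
The plan is to combine the $\R$-invariant collar of \cref{lemma:nbd_size} with a direct analysis of the contact condition, and then a Gray-stability normalization localized near $\Gamma$. First I would apply \cref{lemma:nbd_size} to replace a neighborhood of $\Sigma$ by $\Sigma\times\R_t$ carrying a $t$-invariant contact form, written $\alpha_{\mathrm{inv}} = \beta + u\,dt$ for a $t$-independent $1$-form $\beta\in\Omega^1(\Sigma)$ and function $u\in C^\infty(\Sigma)$, with $\partial_t$ the contact vector field realizing convexity, so that $\Gamma = u^{-1}(0)$. A short computation gives $\alpha_{\mathrm{inv}}\wedge(d\alpha_{\mathrm{inv}})^n = \pm\big(u\,(d\beta)^n + n\,(d\beta)^{n-1}\wedge\beta\wedge du\big)\wedge dt$, so contactness is equivalent to positivity of the bracketed $2n$-form $\Omega$ on $\Sigma$. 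Restricting $\Omega$ to $\Gamma$, where $u=0$, leaves $\pm\, n\,(d\beta)^{n-1}\wedge\beta\wedge du$, which forces (i) $du\neq 0$ along $\Gamma$, so $\Gamma$ is a smooth submanifold and $u$ is a transverse collar coordinate; (ii) the characteristic distribution $\ker\beta$ is transverse to $\Gamma$ (its kernel cannot contain $\ker du$, else $\beta\wedge du$ would vanish); and (iii) $\alpha_\Gamma := \iota^*\beta$ satisfies $\alpha_\Gamma\wedge(d\alpha_\Gamma)^{n-1}\neq 0$, i.e.\ it is a contact form, defining the induced structure $\xi_\Gamma$. (In dimension $3$ only (i)–(ii) carry content: $\Gamma$ is a union of circles and $\alpha_\Gamma\neq 0$ automatically.)

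Next I would build adapted coordinates on $\Sigma$ near $\Gamma$. Using transversality from (ii), choose a vector field $V$ near $\Gamma$ with $V\in\ker\beta$ and $du(V)=-1$; flowing $\Gamma$ along $V$ produces coordinates $(\tau,y)$ with $\Gamma=\{\tau=0\}$, $\partial_\tau=V$, and $y$ restricting to coordinates on $\Gamma$. Since $\partial_\tau\in\ker\beta$, the form $\beta$ has no $d\tau$-component, so $\beta=\beta_\tau$ for a smooth family $\{\beta_\tau\}_{\tau\in(-\ve,\ve)}$ of $1$-forms on $\Gamma$ with $\beta_0=\alpha_\Gamma$; and since $\tfrac{d}{d\tau}u=du(V)=-1$ with $u|_\Gamma=0$, we get $u=-\tau$. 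Thus $\alpha_{\mathrm{inv}}=\beta_\tau-\tau\,dt$ on $N(\Gamma)\times\R_t$. The model form $\alpha_0:=-\tau\,dt+\alpha_\Gamma$ now agrees with $\alpha_{\mathrm{inv}}$ along all of $\{\tau=0\}$, and the difference $\alpha_{\mathrm{inv}}-\alpha_0=\beta_\tau-\alpha_\Gamma$ vanishes identically there.

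The last step normalizes $\{\beta_\tau\}$ to the constant family $\alpha_\Gamma$ by Gray stability. Set $\alpha_s:=\alpha_\Gamma+s(\beta_\tau-\alpha_\Gamma)-\tau\,dt$ for $s\in[0,1]$. At $\{\tau=0\}$ every $\alpha_s$ equals $\alpha_\Gamma$, and the lowest-order part of $\alpha_s\wedge(d\alpha_s)^n$ there is $\pm n\,\alpha_\Gamma\wedge(d\alpha_\Gamma)^{n-1}\wedge d\tau\wedge dt$, a volume form since $\alpha_\Gamma$ is contact; hence, shrinking $\ve$ and using compactness of $\Gamma$ (all data being $t$-independent, the estimate is uniform in $t$), every $\alpha_s$ is contact near $\Gamma$. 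Since $\dot\alpha_s=\beta_\tau-\alpha_\Gamma$ vanishes on $\{\tau=0\}$, the Gray-stability vector field $V_s\in\ker\alpha_s$ — solving $\iota_{V_s}d\alpha_s+\mu_s\alpha_s=-(\beta_\tau-\alpha_\Gamma)$ with $\mu_s$ the conformal factor determined by pairing with the Reeb field of $\alpha_s$ — also vanishes on $\{\tau=0\}$, is $t$-independent, and has flow confined to $\{|\tau|<\ve\}$ (by Grönwall in the $\tau$-direction). Its time-$1$ flow $\Phi$ is then a local contactomorphism identifying $\ker\alpha_0$ with $\ker\alpha_{\mathrm{inv}}$; precomposing the chart with $\Phi$ puts $\xi$ in the normal form $\ker(-\tau\,dt+\alpha_\Gamma)$, and $\iota^*(-\tau\,dt+\alpha_\Gamma)=\alpha_\Gamma$ gives the stated identification of the restriction. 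Here "rel $\Sigma$" should be read as: the ambient isotopy is supported in an arbitrarily small neighborhood of $\Gamma$ (equivalently, is the identity on $\Sigma$ away from that neighborhood).

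I expect the main obstacle to be the interaction of the second and third steps: one must use convexity in an essential way — transversality of the characteristic distribution to $\Gamma$ — to set up coordinates in which the model $-\tau\,dt+\alpha_\Gamma$ already matches $\alpha_{\mathrm{inv}}$ to first order along $\Gamma$, since without the $\ker\beta$-adapted coordinates the two forms disagree in the $d\tau$ slot on $\Gamma$ and the interpolation $\alpha_s$ need not stay contact. Controlling the resulting Moser flow so that it remains confined near $\Gamma$ (rather than drifting in the $\tau$- or $t$-directions) is the other delicate point. By contrast, the contact-condition bookkeeping in the first step is routine, but it cannot be skipped: in dimensions above $3$ it is precisely what produces the contact form $\alpha_\Gamma$ on $\Gamma$ that the statement presupposes through the notation $\xi_\Gamma$.
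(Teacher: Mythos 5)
The paper does not prove this lemma at all --- it is quoted directly from Giroux \cite{giroux1991convexite} (see also Honda's and Geiges's treatments), so there is no internal argument to compare yours against. Your proof is, in substance, the standard one, and in the three-dimensional setting the paper actually uses ($\Sigma$ a surface, $\Gamma$ a collection of circles) every step checks out: writing the invariant form as $\beta + u\,dt$ with $\Gamma = u^{-1}(0)$, extracting from the contact condition that $du \neq 0$ along $\Gamma$ and that $\ker\beta$ is transverse to $\Gamma$ (your computation that $\beta\wedge du \neq 0$ forces $\ker\beta \neq \ker du$ is exactly right for $n=1$), flowing along a normalized characteristic vector field to get $u = -\tau$ and $\beta = \beta_\tau$ with no $d\tau$-component, and then running a Moser/Gray deformation whose generating field vanishes on $\{\tau = 0\}$. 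The observation that the model and the invariant form already agree to first order along $\Gamma$ in the adapted coordinates is indeed the crux, and you identify it correctly. Note also that Gray stability only identifies the contact \emph{structures}, so the final form $-\tau\,dt + \alpha_\Gamma$ is obtained by choosing a new contact form after the coordinate change; since the lemma only asserts the existence of \emph{a} contact form in this shape, that is fine.

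The one point worth tightening is your reinterpretation of ``rel $\Sigma$.'' As you note, the Moser field $V_s \in \ker\alpha_s$ can have a nonzero $\partial_t$-component away from $\{\tau=0\}$, so its flow need not fix $\Sigma = \{t=0\}$ pointwise; it only fixes $\Gamma\times\R_t$. This is harmless here: the flow is the identity on $\{\tau=0\}$, which is where the Legendrian $\Lambda$ sits in the application (\cref{lemma:mainlemma}), and since $\Phi(\Sigma)$ agrees with $\Sigma$ along $\Gamma$ and is graphical over it in the invariant direction, a further small graphical isotopy (supported near $\Gamma$ and fixing $\Gamma$) restores the surface without disturbing the normal form along $\{\tau = 0\}$. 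Saying this explicitly, rather than redefining what ``rel $\Sigma$'' means, would close the gap between your statement and the one quoted in the paper.
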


\begin{remark}
    Another consequence of the contact Hamiltonian vector field correspondence is the following observation. If $\Sigma$ is convex and $X$ is a contact vector field witnessing its convexity associated to a contact Hamiltonian function $H$, then the dividing set is precisely the $0$-locus of $H\vert_{\Sigma}$. 
\end{remark}

The \textit{Legendrian realization principle}, first discovered by Kanda \cite{kanda1998legendrian} and proved in full generality by Honda \cite{honda2000classification}, allows one to witness certain curves and arcs embedded in convex surfaces as Legendrians. We state the version we need below, which is not fully general but sufficient for our purposes.

\begin{theorem}[Legendrian realization principle]\label{thm:LeRP}
Let $\Sigma \subset (Y, \xi)$ be a convex surface (closed, or with Legendrian boundary) with dividing set $\Gamma$. Let $C\subset \Sigma$ be an embedded closed curve such that $C\pitchfork \Gamma$ and $C\cap \Gamma \neq \emptyset$. There is an isotopy from the identity $\phi_t:\Sigma \to \mathcal{U}\subset Y$, $t\in [0,1]$, supported in an arbitrarily small invariant neighborhood of $\Sigma$ and constant outside of an arbitrarily small neighborhood of $C\subset \Sigma$, such that
\begin{enumerate}
    \item $\phi_t(\Sigma)$ is graphical (in particular convex) in the invariant neighborhood, and 
    \item $\phi_1(C)$ is Legendrian.
\end{enumerate}
\end{theorem}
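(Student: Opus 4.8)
The plan is to follow the classical argument of Giroux and Honda: realize $C$ as a union of leaves and singular points of a slightly perturbed characteristic foliation, exploiting the flexibility of characteristic foliations of convex surfaces.

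First I would pass to an $\R$-invariant model. By \cref{lemma:nbd_size} there is a contact embedding $(\Sigma\times\R_t,\ker\alpha_{\mathrm{inv}})\hookrightarrow\mathcal{U}$ with $\Sigma=\Sigma\times\{0\}$, with $\partial_t$ a strict contact vector field, and carrying the dividing set to $\Gamma$. In this model any graph $\Sigma_f=\{(x,f(x)):x\in\Sigma\}$ with $\|f\|_{C^0}$ small lies in $\mathcal{U}$ and is convex with the same dividing set $\Gamma$ (since $\partial_t$ remains transverse to it), and a curve contained in $\Sigma_f$ is Legendrian precisely when its projection to $\Sigma$ is a union of leaves and singular points of the characteristic foliation $\mathcal{F}_f$ of $\Sigma_f$. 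Giroux's realization and flexibility theorems for characteristic foliations state that (a) every singular foliation \emph{divided by} $\Gamma$ equals $\mathcal{F}_f$ for some such graph with $f$ arbitrarily $C^\infty$-small, and (b) if two such foliations agree with $\mathcal{F}_0$ (the characteristic foliation of $\Sigma\times\{0\}$) outside a compact set $K$, the interpolating isotopy of $\Sigma\times\R$ may be taken supported near $\Sigma\times\{0\}$ and constant outside any prescribed neighborhood of $K$. So the problem reduces to building a suitable target foliation $\mathcal{F}_1$.

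Second I would construct $\mathcal{F}_1$: a singular foliation on $\Sigma$ that (i) is divided by $\Gamma$, (ii) agrees with $\mathcal{F}_0$ outside an arbitrarily small annular neighborhood $A$ of $C$, and (iii) contains a small isotopic pushoff of $C$ as a union of leaves and singular points. Because $C\pitchfork\Gamma$ and $C\cap\Gamma\neq\emptyset$, the dividing set meets both components of $A\setminus C$, i.e.\ $C$ is non-isolating in $A$. Using the normal form of \cref{lemma:dividingsetnormal} near the points of $C\cap\Gamma$, I would place a hyperbolic singularity at each such point with two separatrices running along $C$, declare the arcs of $C$ between consecutive intersection points to be leaves, and extend over the rest of $A$ by a foliation that interpolates to $\mathcal{F}_0$ along $\partial A$, keeps $\Gamma$ transverse, and maintains the correct sign of the divergence of a directing vector field (with respect to a chosen area form) on $A\cap R_+$ and on $A\cap R_-$. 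Applying (a)--(b) to $\mathcal{F}_0\rightsquigarrow\mathcal{F}_1$ produces an isotopy $\phi_t$ of $\Sigma\times\R$; restricting to $\Sigma\times\{0\}$ and composing with the graph parametrization gives the desired isotopy into $\mathcal{U}$, with $\phi_t(\Sigma)$ graphical (hence convex) throughout, $\phi_1(C)$ Legendrian, and the dividing set unchanged since $\mathcal{F}_1$ is divided by the same $\Gamma$.

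The main obstacle is the second step: checking that the surgered foliation $\mathcal{F}_1$ simultaneously satisfies all of Giroux's "divided by $\Gamma$" axioms (transversality of $\Gamma$, existence of a compatible area form whose Lie derivative along a directing vector field has the prescribed signs on $R_\pm$, and consistent flow direction across $\Gamma$) while matching $\mathcal{F}_0$ across $\partial A$. This is exactly where the hypothesis $C\cap\Gamma\neq\emptyset$ is needed: if a component of $A\setminus C$ avoided $\Gamma$, the divergence of the directing field would have a fixed sign there while its boundary $C$ would be a closed leaf, contradicting Stokes' theorem, since the flux of a directing vector field through a closed leaf vanishes. Everything else is a routine application of \cref{lemma:nbd_size}, \cref{lemma:dividingsetnormal}, and Giroux's flexibility of characteristic foliations \cite{giroux1991convexite}; the original treatment of this principle is due to Honda \cite{honda2000classification}, following Kanda \cite{kanda1998legendrian}.
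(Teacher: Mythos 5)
First, a point of comparison: the paper does not prove \cref{thm:LeRP} at all --- it is quoted as background, attributed to Kanda \cite{kanda1998legendrian} and Honda \cite{honda2000classification}, so there is no in-paper proof to match. Your overall strategy (pass to the invariant neighborhood via \cref{lemma:nbd_size}, surger the characteristic foliation inside an annulus $A$ around $C$ to a new foliation divided by the same $\Gamma$ and containing $C$ as an invariant set, then invoke Giroux's flexibility theorem to realize it by a graphical isotopy) is exactly the standard proof from the literature, and your explanation of where $C\cap\Gamma\neq\emptyset$ enters --- the Stokes/divergence obstruction to a closed leaf or invariant circle contained in $R_\pm$ --- is the right one.

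However, the explicit construction of the target foliation $\mathcal{F}_1$ in your second step is wrong as written, in two related ways. You place a hyperbolic singularity at each point of $C\cap\Gamma$; but the characteristic foliation of a convex surface is nonsingular and transverse along the dividing set. In the normal form of \cref{lemma:dividingsetnormal}, $\alpha=-\tau\,dt+dz$ near $\Gamma$, the restriction to $\Sigma=\{t=0\}$ is $dz$, whose kernel is spanned by $\partial_\tau$: the foliation crosses $\Gamma=\{\tau=0\}$ transversally with no zeros, and indeed a singularity is a point where $\xi_p=T_p\Sigma$, which cannot happen where $\partial_t\in\xi_p$. So no foliation divided by $\Gamma$ can have singularities on $\Gamma$. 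Second, you declare each arc of $C$ between consecutive points of $C\cap\Gamma$ to be a leaf; this is also impossible, because the flow of a directing vector field crosses $\Gamma$ coherently (out of $R_+$, into $R_-$), so a single flow line cannot both enter and exit $R_+$ through $\Gamma$. The correct construction places one positive singularity (say elliptic, a source) in the \emph{interior} of each arc of $C\cap R_+$ and one negative singularity (a sink) in the interior of each arc of $C\cap R_-$; each arc of $C\setminus\Gamma$ is then a union of two leaves and one singular point, flowing from the source out through $\Gamma$ and from $\Gamma$ into the sink. With that repair --- and the check, which you correctly identify as the remaining work, that this local model extends over $A$ to a foliation divided by $\Gamma$ and matching $\mathcal{F}_0$ near $\partial A$ --- the argument goes through and recovers Honda's proof \cite{honda2000classification} via Giroux flexibility \cite{giroux1991convexite}.
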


\section{Regularly slice implies once-stably decomposably slice}\label{sec:slice}

Here we prove \cref{thm:main3}. The goal is to upgrade the smooth proof of \cref{prop:reg_smth_ribbon} given in \cref{sec:gradations} to the symplectic setting. Throughout we let $(Y, \xi) = (\#^n S^1\times S^2, \xi_{\mathrm{st}})$.

\subsection{Tracing isotopies through intersections with surgeries}

The key mechanism in the proof of \cref{prop:reg_smth_ribbon} was the ability to effectively isotope $K$ past obstructing strands associated to $2$-handle attaching spheres via meridian curves and band surgeries. By a suitable choice of coordinates and illegal isotopy, we may localize all the obstructing intersections to lie sequentially on a single path. The necessary composite maneuver is then given by \cref{fig:stabsequencesmooth}.

\begin{figure}[ht]
	\begin{overpic}[scale=.3]{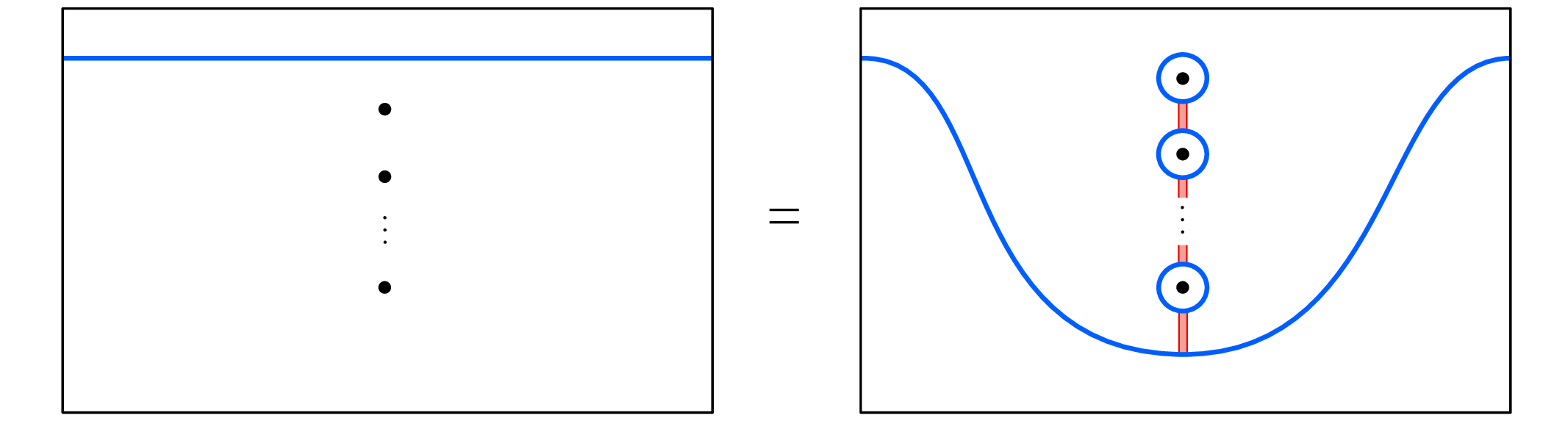}

	\end{overpic}
	\caption{The band surgeries on the right produce a strand isotopic to the strand on the left in the complement of the preimage of the marked points under the diagram projection.}
	\label{fig:stabsequencesmooth}
\end{figure}

The following lemma is the analogue in the Legendrian setting, where the intersections with obstructing $2$-handle strands are normalized to lie along a single Reeb chord. Here is precisely where we make use of the single stabilization. 

\begin{figure}[ht]
	\begin{overpic}[scale=.345]{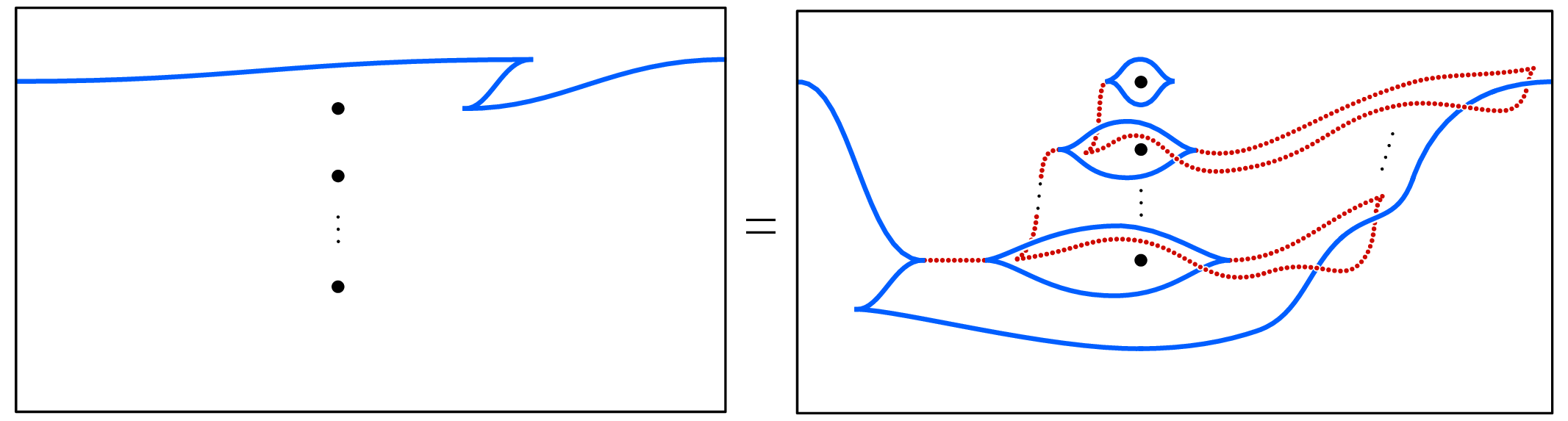}

    \put(7,23.25){\footnotesize \textcolor{lightblue}{$U^{\mathrm{Z}}$}}
    \put(18,19.75){\footnotesize $p_1$}
    \put(18,15.6){\footnotesize $p_2$}
    \put(18,8.45){\footnotesize $p_k$}

     \put(57,3.5){\footnotesize \textcolor{lightblue}{$U_0^{\mathrm{Z}}$}}
     \put(76,12){\tiny \textcolor{lightblue}{$U_k$}}
     \put(75,18.75){\tiny \textcolor{lightblue}{$U_2$}}
     \put(74.4,23){\tiny \textcolor{lightblue}{$U_1$}}

     \put(60,11.5){\tiny \textcolor{darkred}{$\gamma_k$}}
     \put(87,8.5){\tiny \textcolor{darkred}{$\gamma_{k-1}$}}
     \put(64.25,16.95){\tiny \textcolor{darkred}{$\gamma_2$}}
     \put(68,21.25){\tiny \textcolor{darkred}{$\gamma_1$}}
        
	\end{overpic}
	\caption{The statement of \cref{lemma:stabsequence}.}
	\label{fig:stabsequence}
\end{figure}

\begin{lemma}\label{lemma:stabsequence}
Let $U^{\mathrm{Z}}$ be the stabilized Legendrian strand on the left side of \cref{fig:stabsequence}, and $p_1,\dots, p_k$ the marked points. Let $\tilde{U} = U_0^{\mathrm{Z}} \cup U_1 \cup \cdots \cup U_k$ be the link on the right side of the figure and $G = \{\gamma_1, \dots, \gamma_k\}$ the set of surgery arcs in the same panel. Then $\mathrm{Surg}(\tilde{U}, G)$ is Legendrian isotopic to $U^{\mathrm{Z}}$ in the complement of $\Pi_{xz}^{-1}\{p_1, \dots, p_k\}$, where $\Pi_{xz}$ denotes the front projection.  
\end{lemma}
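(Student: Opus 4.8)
The statement is essentially a local, diagrammatic claim: I want to show that a stabilized Legendrian strand carrying $k$ ordered marked points is recovered, up to Legendrian isotopy in the complement of those marked points' fibers, by a ``fan'' of $k$ small max-tb unknots surgered onto a base strand via arcs $\gamma_1,\dots,\gamma_k$. So the plan is to build the right-hand picture from the left-hand picture one marked point at a time, keeping careful track of two things throughout: (i) that each intermediate Legendrian is isotopic to $U^{\mathrm{Z}}$ away from $\Pi_{xz}^{-1}\{p_1,\dots,p_k\}$, and (ii) that the surgeries, when performed in the appropriate order, undo the construction. The single stabilization is what gives the freedom to ``route'' the strand past the marked points: near a stabilization cusp one has extra room in the front to slide a strand over or under without creating new crossings elsewhere, and this is exactly the local model suggested by \cref{fig:stabsequence} and its smooth shadow \cref{fig:stabsequencesmooth}.

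First I would set up the local model precisely. Fix a front neighborhood of $\Pi_{xz}(U^{\mathrm{Z}})$ in which the stabilization and the $k$ marked points $p_1,\dots,p_k$ occur in a prescribed order along the strand, and choose coordinates so that each $p_i$ lies on a short horizontal segment of the front. The elementary move is the one depicted in the two-panel transition of \cref{fig:stabsequence}: replacing a strand passing through a marked point by a max-tb unknot $U_i$ attached to the through-strand by a surgery arc $\gamma_i$, where the arc is chosen so that $\gamma_i \cap (\text{link})$ is $\xi$-transverse (so that the surgery is an honest ambient Legendrian surgery in the sense of \cref{def:dec_moves}) and so that $\Pi_{xz}(\gamma_i)$ meets the rest of the front only at its endpoints, i.e. $\gamma_i$ is front contractible. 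By \cref{lemma:front_contr}, after an isotopy supported in an arbitrarily small neighborhood of $\Pi_{xz}(\gamma_i)$, this arc is contractible, so the pinch move $\mathrm{Surg}(\cdot,\gamma_i)$ and its reverse are legitimate decomposable moves. Performing $\mathrm{Surg}$ along $\gamma_i$ merges $U_i$ back into the through-strand and, by the local front model, the result near $p_i$ is precisely the original strand passing through $p_i$ — this is the key local verification, and it is the same picture as the right-to-left reading of the smooth \cref{fig:stabsequencesmooth} lifted to a front.

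Then I would iterate. Starting from $\tilde U = U_0^{\mathrm{Z}} \cup U_1 \cup \dots \cup U_k$, perform $\mathrm{Surg}(\cdot,\gamma_1)$, then $\mathrm{Surg}(\cdot,\gamma_2)$, and so on; after the $i$-th surgery the diagram looks like $U^{\mathrm{Z}}$ in the region containing $p_1,\dots,p_i$ and like the fan in the region containing $p_{i+1},\dots,p_k$. The arcs $\gamma_1,\dots,\gamma_k$ should be chosen disjoint and with disjoint front neighborhoods, and each front neighborhood disjoint from $\Pi_{xz}^{-1}\{p_1,\dots,p_k\}$ except for the one marked point that the corresponding surgery is ``filling in'' — this is what makes the isotopies of \cref{lemma:front_contr} supported away from all the $p_j$'s except the one being resolved, and hence makes the whole composite an isotopy of $\mathrm{Surg}(\tilde U, G)$ to $U^{\mathrm{Z}}$ in the complement of $\Pi_{xz}^{-1}\{p_1,\dots,p_k\}$. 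After all $k$ surgeries we are left with a Legendrian that agrees with $U^{\mathrm{Z}}$ in a neighborhood of the marked points and is Legendrian isotopic to it globally (via the stabilization room), completing the argument.

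\textbf{Main obstacle.} The delicate point is not any single local move but the \emph{simultaneous} disentanglement: I must arrange the $k$ unknots $U_i$ and arcs $\gamma_i$ so that each surgery arc is front contractible \emph{in the presence of all the others}, and so that performing the surgeries in order never forces a planar Reidemeister move of the Lagrangian projection near a marked point $p_j$ with $j \neq i$ (which would violate the ``complement of $\Pi_{xz}^{-1}\{p_1,\dots,p_k\}$'' conclusion). This is where the single stabilization is genuinely used: it provides one crossing's worth of vertical slack in the front, letting the fan of arcs be nested/cascaded (the $\gamma_i$ emanate in sequence as in \cref{fig:stabsequence}) rather than all competing for the same strand — mirroring the role the stabilization plays in \cref{fig:bandslide} in the smooth proof. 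I expect the bulk of the write-up to be a careful figure-by-figure verification that this cascade can be laid out with pairwise-disjoint front-contractible arcs, after which \cref{lemma:front_contr} and induction on $k$ finish it.
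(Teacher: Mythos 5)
Your proposal is correct and follows essentially the same route as the paper: the paper's proof is exactly the figure-by-figure verification you describe, exhibiting $\mathrm{Surg}(\tilde{U},G)$ explicitly and then isotoping the resulting strand past $p_k, p_{k-1},\dots,p_1$ in turn by Reidemeister moves supported away from $\Pi_{xz}^{-1}\{p_1,\dots,p_k\}$, with the single stabilization absorbing the residual zigzag from the cascade of surgeries. The only differences are cosmetic --- the paper performs all $k$ surgeries at once and then iterates the isotopy from $p_k$ down to $p_1$, whereas you interleave surgeries with local isotopies, and your appeal to front contractibility, while harmless, is not needed for the isotopy statement itself (it matters only for the downstream decomposability of the induced cobordism).
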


\begin{proof}
The proof is contained in \cref{fig:stabsequenceproof}. The left panel gives $\mathrm{Surg}(\tilde{U}, G)$, and the top right panel performs the first sequence of Reidemeister moves to isotope the strand up past $p_k$. The passage from the second to the third panel is the repetition of these moves through each point until $p_1$. Finally, inspection of this third panel reveals that the resulting strand is $U^{\mathrm{Z}}$.     
\end{proof}

\begin{figure}[ht]
	\begin{overpic}[scale=.345]{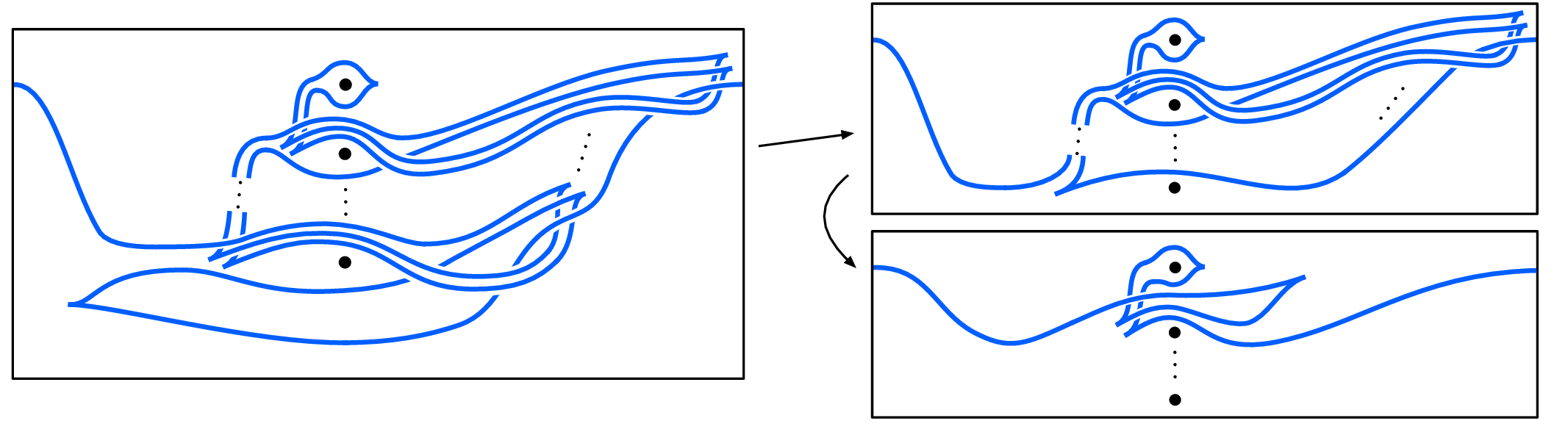}

    \put(5,20){\footnotesize \textcolor{lightblue}{$\mathrm{Surg}(\tilde{U}, G)$}}
        
	\end{overpic}
	\caption{The proof of \cref{lemma:stabsequence}.}
	\label{fig:stabsequenceproof}
\end{figure}

\subsection{Normalizing a convex Seifert disk}

In order to use \cref{lemma:stabsequence}, we need to normalize the Legendrian isotopy of an unknot in a Weinstein diagram so that all obstructing $2$-handle attaching spheres appear in a model as in \cref{fig:stabsequence}. We do so using convex surface theory applied to a Seifert disk. 

\begin{lemma}\label{lemma:seifert}
Let $\Lambda \subset (Y, \xi)$ be a Legendrian link. Let $U\subset Y$ be a max-tb unknot which is disjoint, but not necessarily unlinked, from $\Lambda$. There is a convex Seifert disk $\Sigma$ for $U$ and a choice of transverse contact vector field $X\in \mathfrak{X}(Y)$ witnessing its convexity which is everywhere tangent to $\Lambda$.
\end{lemma}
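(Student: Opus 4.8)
The plan is to build the transverse contact vector field $X$ by prescribing its contact Hamiltonian function $H = \alpha(X)$, exploiting the Hamiltonian--contact vector field correspondence recalled after Example 3.20. First I would take \emph{any} Seifert disk $\Sigma_0$ for the max-tb unknot $U$; since $U$ is max-tb in a tight manifold, $\mathrm{tb}(U) = 0$ and by the Giroux convexity theorem (the version quoted before Example 3.19, applicable since $\mathrm{tb} \le 0$) we may $C^0$-small-near-$\partial$ and $C^\infty$-small-in-the-interior perturb $\Sigma_0$ rel $\partial$ to a convex surface $\Sigma$ with a single dividing curve. The issue is that the contact vector field $X_0$ witnessing convexity of $\Sigma$ has no reason to be tangent to $\Lambda$. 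To fix this, note that $\Lambda$ is disjoint from $U = \partial\Sigma$ and, after a further small perturbation of $\Sigma$ in its interior rel $\partial$ (still keeping it convex, using that convexity is $C^\infty$-generic), we may assume $\Lambda \cap \Sigma$ is a finite set of transverse intersection points, each lying in the interior of $\Sigma$ and away from the dividing set.

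The key step is then to modify the contact Hamiltonian $H_0 = \alpha(X_0)$ so that the new contact vector field becomes tangent to $\Lambda$ without destroying transversality to $\Sigma$. Since $\Lambda$ is Legendrian, $\xi|_\Lambda = T\Lambda \oplus (\text{a line field})$, and a contact vector field $X$ is tangent to $\Lambda$ at a point $p \in \Lambda$ precisely when its $\xi$-component $Z$ (in the decomposition $X = H R_\alpha + Z$, where $\iota_Z d\alpha|_\xi = -dH|_\xi$) lies in $T_p\Lambda$ and $H(p) = 0$ is \emph{not} required --- rather we need $X_p \in T_p\Lambda$, i.e. the Reeb component must vanish along $\Lambda$ (since $R_\alpha \notin \xi \supset T\Lambda$... wait, $T\Lambda \subset \xi$ and $R_\alpha$ is transverse to $\xi$), so we need $H|_\Lambda \equiv 0$ and $Z|_\Lambda$ tangent to $\Lambda$. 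So I would: first arrange $H$ to vanish on $\Lambda$ by subtracting a function supported in a small tubular neighborhood $N(\Lambda)$ disjoint from $\Sigma$ except near the finitely many intersection points --- but this is the delicate part, since near each intersection point $p_i \in \Lambda \cap \Sigma$ we must simultaneously keep $X$ transverse to $\Sigma$ and tangent to $\Lambda$. Using Lemma 3.21 (Giroux normal form) is not quite what we want; instead, work in a Darboux-like chart around each $p_i$ in which $\Sigma$ is the $\{z=0\}$ plane, $\Lambda$ is the $x$-axis-like Legendrian line, and prescribe $X$ locally as $\partial_z + (\text{horizontal correction})$, which is transverse to $\Sigma$, tangent to $\Lambda$, and a contact vector field; then patch these local models to the global $X_0$ with a partition of unity on the level of Hamiltonians. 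Away from $N(\Lambda)$, keep $H = H_0$, so $X$ remains transverse to $\Sigma$ there; inside $N(\Lambda)$ the interpolation stays transverse to $\Sigma$ because $\Sigma \cap N(\Lambda)$ consists of small disks transverse to $\Lambda$, and on each such disk both the old and new vector fields have positive $\partial_z$-component.

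The main obstacle I expect is exactly this simultaneous control near the intersection points $p_i$: ensuring the interpolated contact vector field is \emph{everywhere} transverse to $\Sigma$ while being tangent to $\Lambda$ on all of $N(\Lambda)$. The resolution is that transversality to $\Sigma$ is an \emph{open} condition and only needs to be checked on the compact set $\Sigma$, which meets $N(\Lambda)$ in small disks near the $p_i$; by shrinking $N(\Lambda)$ and choosing the local models compatibly oriented (both pointing to the same side of $\Sigma$), the convex-combination interpolation of Hamiltonians $H = (1-\beta)H_0 + \beta H_{\mathrm{loc}}$ for a cutoff $\beta$ supported in $N(\Lambda)$ yields a contact vector field transverse to $\Sigma$ throughout. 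One then sets $X$ to be this vector field; its dividing set on $\Sigma$ may differ from that of $X_0$, but by the isotopy-invariance of the dividing set (stated after Definition 3.17) it is isotopic, and in any case the lemma only asserts \emph{existence} of a convex Seifert disk with the tangency property, which is now established. Finally, one may need to re-invoke the Giroux theorem once more to confirm the single-dividing-curve normalization survives, or simply note it is unnecessary for the statement as written.
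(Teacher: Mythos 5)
Your overall strategy --- perturb a Seifert disk to a convex one and then repair the witnessing contact vector field through its contact Hamiltonian --- is the same as the paper's, but the repair step as you describe it has a genuine gap. Tangency of a contact vector field $X$ to the Legendrian $\Lambda$ forces $H=\alpha(X)$ to vanish along $\Lambda$ (since $T\Lambda\subset\xi$), so the dividing set of $\Sigma$ with respect to the final $X$, being exactly $\{H\vert_\Sigma=0\}$, \emph{must pass through every intersection point} $p_i\in\Lambda\cap\Sigma$. Your construction does the opposite: you place the $p_i$ away from the dividing set of $X_0$ and keep $H=H_0$ outside a small neighborhood $N(\Lambda)$, so the zero set of $H\vert_\Sigma$ acquires new components separating each $p_i$ from the locus where $H_0\neq 0$ --- i.e.\ closed contractible dividing circles around the $p_i$. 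By Giroux's criterion (as recalled in the paper's Example on convex Seifert disks), a convex surface in a tight manifold admits no such closed dividing circles, so the interpolated vector field \emph{cannot} be transverse to $\Sigma$ everywhere; the failure is forced, not a matter of shrinking $N(\Lambda)$. Relatedly, the ``openness of transversality'' argument for $H=(1-\beta)H_0+\beta H_{\mathrm{loc}}$ does not apply: the Hamiltonian--vector field correspondence involves $dH\vert_\xi$, so the term $(H_{\mathrm{loc}}-H_0)\,d\beta$ enters, and since $H_{\mathrm{loc}}(p_i)=0$ while $H_0(p_i)\neq 0$ this term blows up as $N(\Lambda)$ shrinks; the resulting vector field is not close to a convex combination of $X_0$ and $X_{\mathrm{loc}}$. (Two smaller issues: the max-tb unknot has $\mathrm{tb}(U)=-1$, not $0$; and in a Darboux chart a Legendrian through $p$ is automatically tangent to $\{z=0\}$ at $p$, so your local model with $\Sigma=\{z=0\}$ and $\Lambda$ transverse to it is inconsistent.)

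The paper's proof avoids this by redesigning the dividing set globally rather than patching locally. It first builds a contact vector field $X$ on a neighborhood $J^1(\Lambda)$ of $\Lambda$ (via contact handles) that is tangent to $\Lambda$ and positively transverse to $\Sigma$; the disks $D_\Lambda=J^1(\Lambda)\cap\Sigma$ are then convex with dividing curves necessarily containing the $p_i$. It then chooses a single properly embedded arc $\Gamma\subset\Sigma$ that agrees with these local dividing curves, agrees with the original dividing curve $\Gamma_0$ near $U$, and is smoothly isotopic to $\Gamma_0$ in $\Sigma$, and finally prescribes a Hamiltonian $H$ agreeing with $\alpha(X)$ on $J^1(\Lambda)\cup\mathrm{Op}(U)$ and with $H\vert_\Sigma$ vanishing exactly along $\Gamma$. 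To salvage your argument you would need precisely this step: a global candidate dividing arc threaded through all the intersection points, not a local modification near them.
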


\begin{proof}
Let $\Sigma$ be a Seifert disk for $U$. By general position we may assume that $\Lambda$ intersects $\Sigma$ transversally in its interior. Since $\mathrm{tb}(U) = -1 \leq 0$, we perform a $C^0$-small perturbation of $\Sigma$ near and relative to $U$, followed by a $C^{\infty}$-small perturbation of the interior, so that $\Sigma$ is convex. We may moreover assume that this perturbation preserves transversality with $\Lambda$. 

Convexity of $\Sigma$ means that there exists some contact vector field $X_0$ everywhere transverse to $\Sigma$. However, to prove the lemma we need to construct such a vector field which is everywhere tangent to $\Lambda$. We begin by constructing a partially defined contact vector field in a neighborhood of $\Lambda$.  

Using the flow of $X_0$, we first identify a small vertically invariant one-sided collar neighborhood $N(\Sigma) \cong [-\ve, 0]_t \times \Sigma$, where $\Sigma = \{0\} \times \Sigma$ and $X_0 = \partial_t$, such that each component of $\Lambda \cap N(\Sigma)$ is a Legendrian strand with one endpoint on $\{-\ve\}\times \Sigma$ and the other on $\{0\}\times \Sigma$. Let $\Lambda_j$ be a component of $\Lambda$. View each arc of $\Lambda_j$ lying outside $N(\Sigma)$ as the Legendrian core of a contact $1$-handle. Likewise, view a $J^1$-neighborhood of each arc of $\Lambda_j$ lying inside $N(\Sigma)$ as a contact $0$-handle for which $\Lambda_j$ is tangent to the ascending manifold. With contact $0$-handles and $1$-handles defined accordingly, we get a contact vector field $X$ defined on $J^1(\Lambda)$ which is positively transverse to $\Sigma$; see the red vector field in \cref{fig:seifertdisk}. 

\begin{figure}[ht]
	\begin{overpic}[scale=.347]{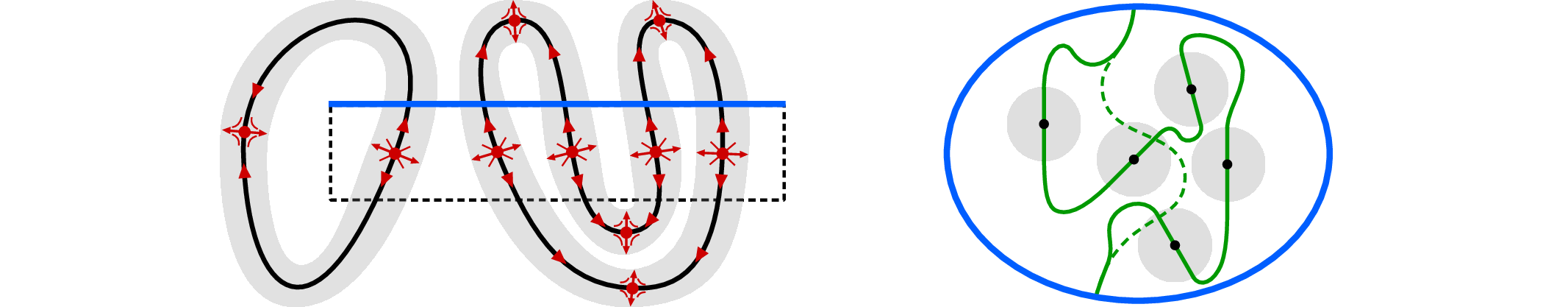}
        \put(65,4.5){\footnotesize \textcolor{darkgreen}{$\Gamma$}}
        \put(71,13){\tiny \textcolor{darkgreen}{$\Gamma_0$}}

        \put(50.5,12.5){\tiny \textcolor{lightblue}{$\{0\}\times \Sigma$}}
        \put(48.5,16.5){\footnotesize \textcolor{darkred}{$X$}}
        \put(83.5,16){\footnotesize \textcolor{lightblue}{$U$}}
	\end{overpic}
	\caption{The shaded region on the left is $J^1(\Lambda)$; on the right, the shaded disks are $D_{\Lambda}$.}
	\label{fig:seifertdisk}
\end{figure}

Next we must extend $X$ to be defined near the rest of $\Sigma$. We take $X = X_0$ near $U$, needing to interpolate appropriately across the remaining portion of $\Sigma$. Let $D_{\Lambda} = J^1(\Lambda) \cap \Sigma$, and let $\dot{\Sigma} = \Sigma - (D_{\Lambda} \cup \mathrm{Op}(U))$, where $\mathrm{Op}(U)$ denotes the neighborhood of $U$ on which we take $X = X_0$. 

Because $D_{\Lambda}$ is convex with respect to $X$ and $X$ is tangent to $\Lambda$ (which is Legendrian, hence tangent to $\xi$), each disk component of $D_{\Lambda}$ admits dividing curves.\footnote{Note that the boundary of $D_{\Lambda}$ is not necessarily Legendrian. This is not a problem, but such a surface may a priori have no dividing set.} By shrinking $D_{\Lambda}$ if necessary we may assume the dividing curves are transverse to $\partial D_{\Lambda}$. Let $\Gamma\subset \Sigma$ denote a smoothly embedded arc that
\begin{enumerate}
    \item agrees with the dividing curves of $D_{\Lambda}$, 
    \item agrees with the dividing curve $\Gamma_0$ of $\Sigma$ with respect to $X_0$ in $\mathrm{Op}(U)$, and 
    \item is smoothly isotopic in $\Sigma$ to $\Gamma_0$. 
\end{enumerate}
Fix a contact form $\alpha$. Choose a smooth function $H: \mathrm{Op}(\Sigma) \cup J^1(\Lambda) \to \R$ that agrees with $\alpha(X)$ on $J^1(\Lambda)$ and $\mathrm{Op}(U)$, such that $H\vert_{\Sigma}$ vanishes precisely along $\Gamma$. By the contact Hamiltonian vector field correspondence, the associated contact vector field (which we continue to call $X$) defined on $\mathrm{Op}(\Sigma) \cup J^1(\Lambda)$ is everywhere tangent to $\Lambda$ and transverse to $\Sigma$. We then extend $H$ to all of $Y$ in an arbitrary way (for instance, we can smoothly decay it to $0$ outside of $\mathrm{Op}(\Sigma) \cup J^1(\Lambda)$) to obtain the desired contact vector field $X$. 
\end{proof}

\subsection{Regularly-slice implies once-stably decomposably slice}

We begin with a lemma that summarizes the necessary technical work.

\begin{lemma}\label{lemma:mainlemma}
Let $\Lambda \subset (Y, \xi)$ be a Legendrian link. Let $U\subset Y$ be a max-tb unknot which is disjoint, but not necessarily unlinked, from $\Lambda$. There is a link $\tilde{U} = U_0^{\mathrm{Z}} \cup U_1, \dots, U_k \subset Y - \Lambda$ and a set of embedded surgery arcs $G = \{\gamma_1, \dots, \gamma_k\}\subset Y - \Lambda$ for $\tilde{U}$ with the following properties.
\begin{enumerate}
    \item Each component $U_1, \dots, U_k$ is a max-tb unknot in $Y$.
    \item The component $U_0^{\mathrm{Z}}$ is a $\mathrm{tb}=-2$ unknot in $Y-\Lambda$. 
    \item The knot $\mathrm{Surg}(\tilde{U}, G)$ is Legendrian isotopic to $S_{\pm}(U)$ in $Y - \Lambda$.
\end{enumerate}
Moreover, if $S_1, \dots, S_n$ is a choice of belt spheres for $Y$, we may assume that $\tilde{U} \cap S_j = \emptyset$ for each $j=1, \dots, n$. 
\end{lemma}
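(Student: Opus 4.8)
The plan is to carry out the symplectic analogue of the smooth argument in the proof of \cref{prop:smooth}: perform an ``illegal isotopy'' of $U$ to a small standard unknot, record each passage through $\Lambda$ by a meridian and a surgery arc, and arrange matters so that all of these passages occur sequentially along a single once-stabilized Legendrian strand, to which \cref{lemma:stabsequence} then applies verbatim.

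First I would put the Seifert disk of $U$ into normal form. Apply \cref{lemma:seifert} to get a convex Seifert disk $\Sigma$ for $U$ together with a contact vector field $X$, transverse to $\Sigma$, whose dividing set is $\Gamma$ and which is everywhere tangent to $\Lambda$. Since $Y$ is tight and $\mathrm{tb}(U) = -1$, the dividing set $\Gamma$ is a single properly embedded arc in $\Sigma$. I would also arrange $\Sigma$ to be disjoint from the belt spheres $S_1, \dots, S_n$ --- possible after innermost-disk compressions, since each $S_j$ is a $2$-sphere, and the perturbation making $\Sigma$ convex can be taken to preserve this --- so that the entire construction below lives in a neighborhood of $\Sigma$ disjoint from $\bigcup_j S_j$. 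The key observation is that $\Lambda \cap \Sigma$ is a finite subset of $\Gamma$: writing $H = \alpha(X)$ for the associated contact Hamiltonian, convexity gives $\Gamma = \{H|_\Sigma = 0\}$, while $\Lambda$ being Legendrian and tangent to $X$ forces $X|_\Lambda \subset \ker\alpha$, hence $H|_\Lambda \equiv 0$. As $\Gamma$ is an arc, these finitely many intersection points $p_1, \dots, p_k$ inherit a canonical linear order.

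Next I would localize. Using \cref{lemma:nbd_size}, replace a neighborhood of $\Sigma$ (disjoint from the belt spheres) by an $\R$-invariant model $\Sigma \times \R$, and use \cref{lemma:dividingsetnormal} to put $\xi$ into the normal form $-\tau\, dt + \alpha_\Gamma$ near $\Gamma$. In this model $U = \partial\Sigma \times \{0\}$ is contracted across $\Sigma$ to a small Legendrian unknot, and the only passages through $\Lambda$ occur at the ordered points $p_1, \dots, p_k$ on $\Gamma$. I would route the contraction to push a single finger of $U$ inward along $\Gamma$, threading $p_1, \dots, p_k$ in order, while the rest of $U$ retracts to a small standard unknot disjoint from $\Lambda$ and from $\bigcup_j S_j$. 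After a single stabilization $S_\pm$, this finger can be taken to be precisely the stabilized strand $U^{\mathrm{Z}}$ of \cref{fig:stabsequence} with the $p_i$ as its marked points, via the same local front manipulations used in \cref{lemma:stabsequence} and \cref{fig:stabsequenceproof}; the stabilization is consumed exactly here, and is what gives the strand the shape needed to be fed through all $k$ intersection points by iterated Reidemeister moves.

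Finally, \cref{lemma:stabsequence} gives that $S_\pm(U)$, now presented as $U^{\mathrm{Z}}$ with marked points $p_1, \dots, p_k$, is Legendrian isotopic in the complement of $\Pi_{xz}^{-1}\{p_1, \dots, p_k\}$ to $\mathrm{Surg}(\tilde U, G)$ with $\tilde U = U_0^{\mathrm{Z}} \cup U_1 \cup \cdots \cup U_k$ and $G = \{\gamma_1, \dots, \gamma_k\}$. Since the $p_i$ are exactly the points of $\Lambda \cap \Sigma$ and the remainder of the configuration avoids $\Lambda$, this isotopy takes place in $Y - \Lambda$, so $\tilde U, G \subset Y - \Lambda$; and since everything lives in a neighborhood of $\Sigma$ disjoint from the belt spheres, $\tilde U \cap S_j = \emptyset$ for each $j$. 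Conclusions (1)--(3) are read off from the local models of \cref{lemma:stabsequence}: each $U_i$ is a max-tb unknot, $U_0^{\mathrm{Z}}$ is a once-stabilized ($\mathrm{tb} = -2$) unknot in $Y - \Lambda$, and $\mathrm{Surg}(\tilde U, G)$ is Legendrian isotopic to $S_\pm(U)$ there. I expect the localization step to be the main obstacle --- showing the contraction of $U$ can be routed so that it meets $\Lambda$ only along a single stabilized strand in a front projection --- since that is where the convex-surface normal forms and the single stabilization do the real work; the remainder is bookkeeping modeled on the proof of \cref{prop:smooth}.
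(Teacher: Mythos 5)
Your overall strategy is the paper's: convex Seifert disk via \cref{lemma:seifert}, the observation that $\Lambda\cap\Sigma$ lies on the dividing arc $\Gamma$ because $X$ is tangent to the Legendrian $\Lambda$, normalization of a neighborhood of the subarc of $\Gamma$ through $p_1,\dots,p_k$ via \cref{lemma:nbd_size} and \cref{lemma:dividingsetnormal} into a front-projection model, and then \cref{lemma:stabsequence} to consume the single stabilization. That part matches the paper's proof step for step, including the linear ordering of the $p_i$ along $\Gamma$ and the identification of the retracted remainder of $U$ with the unknotted component (the paper makes this explicit by marking a point $q_{k+1}$ on $\Gamma$ past $p_k$ and Legendrian-realizing a small circle $U_0$ around it, so that $U_0^{\mathrm{Z}}$ is smoothly isotopic to $U_0$ in $Y-\Lambda$).

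The genuine problem is your treatment of the belt spheres. You propose to compress $\Sigma$ to be disjoint from $S_1,\dots,S_n$ and then run the whole construction inside a neighborhood of $\Sigma$ missing $\bigcup_j S_j$. This is impossible in general: $\partial\Sigma = U$ may itself intersect the belt spheres, in which case $\Sigma\cap S_j$ contains arcs with endpoints on $U\cap S_j$ that no innermost-disk compression can remove. This is not a degenerate case --- it is exactly the situation in the intended application (\cref{thm:main3}), where $U$ passes geometrically through the $1$-handles and that passage is the whole obstruction to cancelling them. Indeed, if your claim held, the surgery arcs $G$ would also avoid the belt spheres and the arc-sliding step in the proof of \cref{thm:main3} would be unnecessary, which is a sign something is off. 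The lemma only asserts disjointness for the link $\tilde{U}$, not for $\Sigma$ or for $G$, and the correct (and much softer) argument is the paper's: the components $U_1,\dots,U_k$ are small meridional circles near the points $p_i\in\Lambda$ and $U_0^{\mathrm{Z}}$ is a small circle near a generic point of $\Sigma$, so by shrinking them and, if necessary, sliding $U_1,\dots,U_k$ along $\Lambda$, one arranges $\tilde{U}\cap S_j=\emptyset$ while fully expecting the arcs $\gamma_i$ to still cross the belt spheres. With that correction the rest of your argument goes through.
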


\begin{proof}
By \cref{lemma:seifert}, we may choose a convex Seifert disk for $U$ in $Y$ together with a convexity witnessing contact vector field $X$ everywhere tangent to $\Lambda$. By integrating the contact vector field we obtain a $t$-invariant neighborhood $(\R_t \times D^2, \alpha)$ of the Seifert disk $\{t=0\}$ such that the dividing set $\Gamma$ is a Reeb chord of $\alpha$, and 
\[
\Lambda \cap (\R\times D^2) = \R \times \{p_1, \dots, p_k\}
\]
where $p_1, \dots, p_k\in \Gamma$ is a set of points lying sequentially in their labeled order along the dividing set of the Seifert disk.

Let $q_{k+1}$ be an arbitrary point on $\Gamma$ past $p_k$ in the sequential ordering of $p_1, \dots, p_k$. Let $U_0$ be a small circle embedded in the Seifert disk enclosing $q_{k+1}$ and none of the other labeled points; see \cref{fig:diskmodel2}. By the Legendrian realization principle, after a graphical isotopy of $\Sigma$ supported near $U_0$, we may assume that $U_0$ is Legendrian. In particular, $U_0$ is a max-tb unknot, unlinked from $\Lambda$.

\begin{figure}[ht]
	\begin{overpic}[scale=.347]{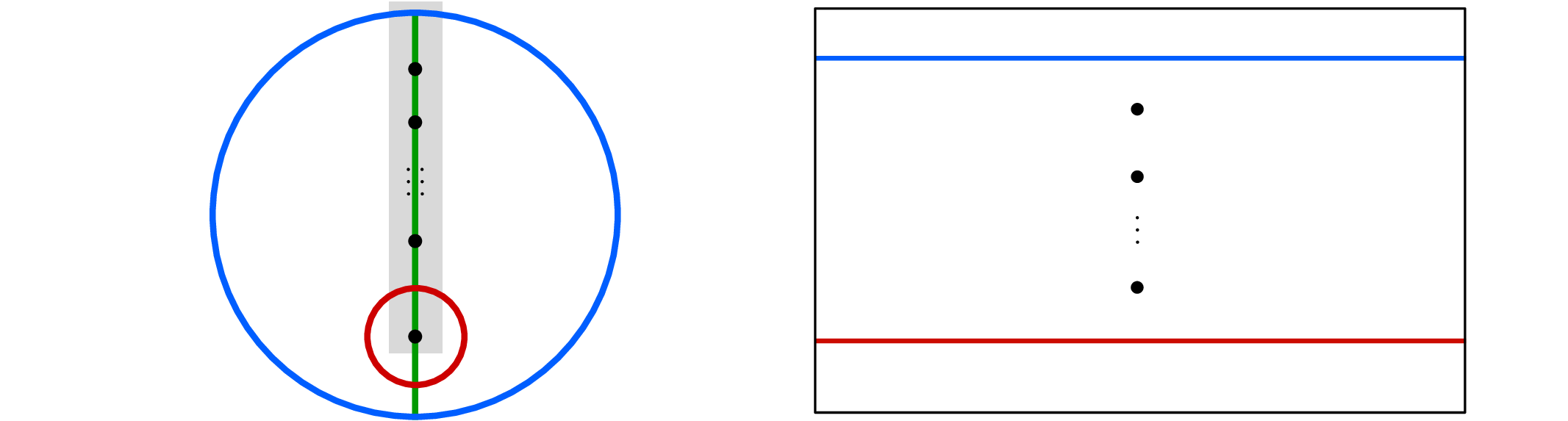}

    \put(15,23.25){\footnotesize \textcolor{lightblue}{$U$}}
    \put(23.5,22.5){\tiny $p_1$}
    \put(23.5,18.8){\tiny $p_2$}
    \put(23.5,11.45){\tiny $p_k$}
    \put(19.5,4.95){\tiny $q_{k+1}$}
    \put(30.75,4.75){\footnotesize \textcolor{darkred}{$U_0$}}

     \put(57,2.5){\footnotesize \textcolor{darkred}{$U_0$}}
     \put(57,20.5){\footnotesize \textcolor{lightblue}{$U$}}
     \put(69.5,19.75){\tiny $p_1$}
    \put(69.5,15.6){\tiny $p_2$}
    \put(69.5,8.45){\tiny $p_k$}

	\end{overpic}
	\caption{The model neighborhood in the proof of \cref{lemma:mainlemma}. The right side is the front projection (after an additional contactomorphism) of the shaded region in the convex model on the left.}
	\label{fig:diskmodel2}
\end{figure}

Next, let $\Gamma'\subset \Gamma$ be the subarc of $\Gamma$ connecting $q_{k+1}$ and $U$ that passes through the points $p_1, \dots, p_k$. Let $N(\Gamma')\subset D^2$ be a small tubular neighborhood of $\Gamma'$ in the Seifert disk, and finally let $\mathcal{U}:= \R\times N(\Gamma')$. By \cref{lemma:dividingsetnormal}, after shrinking $N(\Gamma)$ if necessary we may assume that $\R\times N(\Gamma')$ is contactomorphic to $(\R_t \times (-\ve, \ve)_{\tau} \times \Gamma_z, dz - \tau\, dt)$. After pulling back this model with the strict contactomorphism
\begin{align*}
(\R^3_{x,y,z}, dz - y\, dx) \, &\to \, (\R^3_{t,\tau,z}, dz - \tau\, dt) \\    
(x,y,z)\, &\mapsto \, (-y,x, z-xy)
\end{align*}
the neighborhood admits an $(x,z)$-front projection (with infinite thickness in the $y$-fiber direction) as depicted on the right side of \cref{fig:diskmodel2}. 

Now we are prepared to apply \cref{lemma:stabsequence} and complete the proof. Stabilizing $U$ once to $S_{\pm}(U)$ and applying the lemma produces a $\mathrm{tb}=-2$ unknot $U_0^{\mathrm{Z}}$, a set of max-tb unknots $U_1, \dots, U_k\subset Y$, and a set of surgery arcs $G = \{\gamma_1, \dots, \gamma_k\}$ such that $\mathrm{Surg}(\tilde{U},G)$ is Legendrian isotopic to $S_{\pm}(U)$ in $Y- \Lambda$. By construction of the model, $U_0^{\mathrm{Z}}$ is smoothly isotopic to $U_0$ in $Y-\Lambda$, hence $U_0^{\mathrm{Z}}$ is unknotted in $Y-\Lambda$. Finally, by choosing $U_0, U_1, \dots, U_k$ sufficiently small and performing a Legendrian isotopy of $U_1, \dots, U_k$ along $\Lambda$ if necessary, we may ensure that the link $\tilde{U}$ is disjoint from the prescribed belt spheres $S_1, \dots, S_n$. 
\end{proof}

\begin{proof}[Proof of \cref{thm:main3}.]
By \cite[Theorem 1.10]{conway2017symplectic}, we may present any regularly slice knot as the max-tb unknot $U$ in the boundary of a Weinstein handlebody diagram which is homotopic to the symplectization. This latter assumption provides a sequence of Legendrian isotopies, handle slides of attaching spheres, and $1$-/$2$-handle births to a Weinstein handlebody diagram in geometrically canceling position; we intentionally delay all handle cancellations.  

By contact isotopy extension, any Legendrian isotopies of attaching spheres induce a Legendrian isotopy of $U$. Moreover, any requisite handle births and handle slides may generically be performed in the complement of $U$; indeed, a Legendrian handle slide is supported in a tubular neighborhood of the attaching spheres in question and a short Reeb chord between them. Therefore, we may assume that we have a Weinstein handlebody diagram in geometry canceling position together with a complicated diagram of $U$ which is disjoint from all attaching spheres. Moreover, $U$ is a max-tb unknot in the copy of $(Y, \xi) = (\#^n S^1\times S^2, \xi_{\mathrm{st}})$ obtained after erasing all $2$-handle attaching spheres. 

Now we apply \cref{lemma:mainlemma} with $\Lambda$ the attaching locus of all Weinstein $2$-handles. This gives us a link $\tilde{U} = U_0^{\mathrm{Z}}, U_1, \dots, U_k \subset Y-\Lambda$, disjoint from the belt spheres of the $1$-handles, where $U_0^{\mathrm{Z}}$ is a $\mathrm{tb}=-2$ unknot in $Y-\Lambda$, $U_1, \dots, U_k$ are max-tb unknots in $Y$, and $G=\{G_1, \dots, G_k\}$ is a set of surgery arcs so that $\mathrm{Surg}(\tilde{U}, G)$ is Legendrian isotopic to $S_{\pm}(U)$ in $Y-\Lambda$. In particular, by filling each $U_1, \dots, U_k$ with their standard disk fillings, $\mathrm{Surg}(\tilde{U}, G)$ gives a (strong) decomposable Lagrangian concordance in the symplectization of $Y$ from $U_0^{\mathrm{Z}}$ to $S_{\pm}(U)$.

Although the base link $\tilde{U}$ does not intersect the belt spheres of the $1$-handles, the surgery arcs in $G$ may. For each such intersection, slide the surgery arc over the canceling $2$-handle attaching sphere. Once the $1$-handles are clear of intersections with surgery arcs, the canceling pairs can be erased. Left in the wake is a Legendrian surgery presentation of a (strong) decomposable concordance from $U_0^{\mathrm{Z}}$, a $\mathrm{tb}=-2$ unknot, to the once-stabilized original regularly slice knot.  
\end{proof}

\section{Diagrammatic presentation of regular cobordisms}\label{sec:diag}

Here we prove the extensions of \cite[Theorem 1.10]{conway2017symplectic} to cobordisms, knot fillings, and concordances.

\begin{proof}[Proof of \cref{thm:cob_diag}.]
By \cref{lemma:regular_cot_decomp}, there is a Weinstein deformation equivalence from the symplectization to 
\[
((-\infty, 0] \times S^3) \cup \D^*L \cup W_L
\]
where $((-\infty, 0] \times S^3, e^s\, \alpha, \phi)$ is a half-symplectization, $(\D^*L, \lambda_{\mathrm{st}}, \phi_{\mathrm{st}})$ is the Weinstein cobordism with corners furnished by \cref{lemma:cot_bund_str} attached to the half-symplectization along $\{0\}\times \Lambda_-$, and $(W_L, \lambda_L, \phi_L)$ is a Weinstein cobordism whose attaching locus is disjoint from the positive boundary of the $0$-section in $\D^*L$. Moreover, we identify $L$ with the $0$-section of $\D^*L$ glued onto the collar $(-\infty, 0]\times \Lambda_- \subset (-\infty, 0] \times S^3$.    

Since $L$ is a cobordism with nonempty positive and negative boundary, \cref{lemma:cot_bund_str} implies that we may homotope the Weinstein structure $(\D^*L, \lambda_{\mathrm{st}}, \phi_{\mathrm{st}})$ to have only critical points of index $1$, and additionally that the (un)stable manifolds of the critical points are contained in $L$. Let 
\[
(W_0, \lambda_0, \phi_0) := ((-\infty, 0] \times S^3) \cup \D^*L \cup (\textrm{$1$-handles from } W_L).
\]
By general position we may assume that the $1$-handles from $W_L$ are attached away from $\partial_+ L$. Let $L_0 \subset W_0$ denote the corresponding Lagrangian cobordism and $\Lambda_0 := \partial_+ L_0 \subset \partial W_0 = (\#^{n+k} S^1\times S^2, \xi_{\mathrm{st}})$ its positive boundary. 

The diagrammatic presentation of $W_0$ and $L_0$ constructed so far is as follows. First, the underlying Weinstein handlebody diagram presenting $W_0$ consists of ($n+k$)-many $1$-handles, $n$ of which are the handles from $\D^*L$ and $k$ of which are the $1$-handles from $W_L$. The former are coupled with Lagrangian $1$-handles attached to the cylinder $(-\infty, 0]\times \Lambda_-$, while the latter are attached away from $\Lambda_0$. Consequently, in the diagram, $\Lambda_0$ intersects each $1$-handle associated to $\D^*L$ twice geometrically and with algebraic intersection number $0$, avoiding the other $k$-many $1$-handles. It remains to attach all of the $2$-handles from $W_L$, which by general position is done in the complement of $\Lambda_0$. This gives the desired diagrammatic presentation as stated in the theorem.

Finally, the converse statement is clear, as any such cobordism is constructed by coupled Weinstein-Lagrangian handles, hence is regular by \cite{eliashberg2018flexiblelagrangians}.
\end{proof}

\begin{proof}[Proof of \cref{thm:fill_diag}.]
If $L$ is a regular genus $g\geq 1$ filling of a knot, the above proof provides a Weinstein deformation equivalence instead to $\D^*L \cup W_L$, where $\D^*L$ has a single critical point of index $0$ and $2g$ critical points of index $1$, and $W_L$ is a Weinstein cobordism whose attaching locus is disjoint from $\partial_+L$. Observe that attaching a $2$-handle to $\D^*L$ along $\partial_+ L$ produces $\D^*\Sigma_g$, where $\Sigma_g$ is the closed surface of genus $g$. By \cite{gompf1998handlebody}, $\D^*\Sigma_g$ admits a handlebody diagram with $2g$-many $1$-handles and a $2$-handle attached along the knot $\Lambda$ in \cref{fig:filldiag}; thus, we obtain a diagram for $\D^*L$ with $\Lambda = \partial_+ L$ drawn by simply erasing the $(-1)$ surgery coefficient. Subsequently attaching $W_L$ amounts to attaching some additional $1$-handles and $2$-handles, all disjoint from $\Lambda$. The converse statement is again clear, as Gompf's standard handlebody diagram presents $\D^*L$. 
\end{proof}

\begin{proof}[Proof of \cref{thm:conc_diag}.]
If $L$ is a regular concordance from $\Lambda_-$ to $\Lambda_+$, then \cref{lemma:cot_bund_str} allows us to Weinstein homotope away all critical points on $L$. The result is a deformation equivalence with 
\[
((-\infty, 0] \times S^3) \cup W_L
\]
where $((-\infty, 0] \times S^3, e^s\, \alpha, \phi)$ is a half-symplectization, $L$ is the cylinder $(-0,\infty]\times \Lambda_-$, and $W_L$ is a Weinstein cobordism, itself deformation equivalent to the symplectization, whose attaching locus is disjoint from $\partial_+ L$. The diagrammatic consequence is as stated in the corollary: $L$ is the trivial cylinder over $\Lambda_-\in S^3$, $W_L$ consists of Weinstein handles attached around $\Lambda_-$, and canceling these handles transforms $\Lambda_-$ into $\Lambda_+$. The converse statement is once again clear, as any such concordance is tangent to the symplectization Liouville vector field. 
\end{proof}
\section{Satellites and normal rulings}\label{sec:sat}

Here we prove the results stated in \cref{subsec:sat} and \cref{subsec:rulings} on satellites and normal rulings. 

\subsection{Satellites}

\begin{proof}[Proof of \cref{thm:main_sat}.]
By \cref{thm:conc_diag}, up to Weinstein deformation equivalence we may assume that the regular concordance $L$ is the trivial cylinder over $\Lambda_-$, where $\Lambda_-$ is a Legendrian knot in a Kirby diagram disjoint from all $1$-handles and $2$-handles. Satelliting the cobordism by the pattern knot $\Lambda'\subset J^1(S^1)$ is then witnessed by satelliting $\Lambda_-$ by $\Lambda'$ in the diagram and considering the resulting trivial cylinder. By the converse statement of \cref{thm:conc_diag}, the concordance is regular.     
\end{proof}

\begin{proof}[Proof of \cref{cor:neg_ans}.]
Let $\Lambda$ denote the standard Legendrian $\overline{9_{46}}$. As $\Lambda$ is decomposably slice, it is regularly slice by \cite{conway2017symplectic}. Therefore, there is a regular concordance $U \prec_{\mathrm{reg}} \Lambda$. The concordance of Cornwell, Ng, and Sivek arises by satelliting this concordance by the Whitehead pattern; see the right side of \cref{fig:filldiag}. By \cref{thm:main_sat}, the satellites are regularly concordant. 
\end{proof}

\begin{proof}[Proof of \cref{cor:EL_stab}.]
Let $L\subset \R_s \times S^3$ be a regular cobordism with nonempty positive and negative end. The Etnyre-Leverson stabilization operation first requires a choice of a smoothly embedded curve $c\subset L$ from $\partial_- L$ to $\partial_+ L$ which is cylindrical near $\{s = \pm \infty\}$. By regularity of $L$ and \cref{lemma:cot_bund_str}, we may assume that we have presented the cobordism in a regularizing Weinstein structure whose Liouville vector field is tangent to and nonvanishing along $c$. By \cref{thm:cob_diag}, the curve $c$ will correspond to a marked point on the link $\Lambda_0$ in the corresponding diagrammatic presentation. We stabilize the cobordism and confirm regularity of the result by simply stabilizing $\Lambda_0$ near the marked point and applying \cref{thm:cob_diag}.   
\end{proof}

\subsection{Normal rulings}

Next we prove \cref{thm:main_ruling} on normal rulings induced by regular cobordisms; the observation \cref{cor:normal_ruling_filling} follows immediately.

\begin{proof}[Proof of \cref{thm:main_ruling}.]
Let $L\subset \R\times S^3$ be a regular cobordism from $\Lambda_-$ to $\Lambda_+$. Assume that $\Lambda_-$ has a normal ruling. By \cref{thm:cob_diag}, regularity of $L$ allows us to construct $\Lambda_+$ from $\Lambda_-$ in two stages. First, we attach a number of Weinstein $1$-handles and perform Legendrian surgeries on $\Lambda_-$ along arcs passing through some subset of these handles to produce a link $\Lambda_0 \subset \#^n S^1\times S^2$. Second, we attach $2$-handles away from $\Lambda_0$, and perform the Kirby calculus necessary to trivialize the Weinstein structure, yielding $\Lambda_+$. 

Consider the first stage. Viewing $1$-handles as dotted contact-$(+1)$ surgeries on unknots, there is a map from front projections of links in $\#^n S^1\times S^2$ to front projections of links in $S^3$ obtained by erasing the dotted surgeries. Under this map, $\Lambda_0$ is obtained from $\Lambda_-$ by Legendrian surgeries, and as decomposable cobordisms induce canonical normal rulings it follows that $\Lambda_0$ inherits a ruling from $\Lambda_-$. Moreover, adding the dotted $1$-handles back into the picture, this normal ruling has the property that if a path crosses a $1$-handle, then its companion path does as well; see \cref{fig:NR1handle}. (In particular, $\Lambda_0$ has a normal ruling in $\#^n S^1\times S^2$ in the sense of \cite{leverson2017augmentationsconnectsum}. As we start and finish in $S^3$, we can work around this concept and can stick to normal rulings in $S^3$.)

\begin{figure}[ht]
    
	\begin{overpic}[scale=.357]{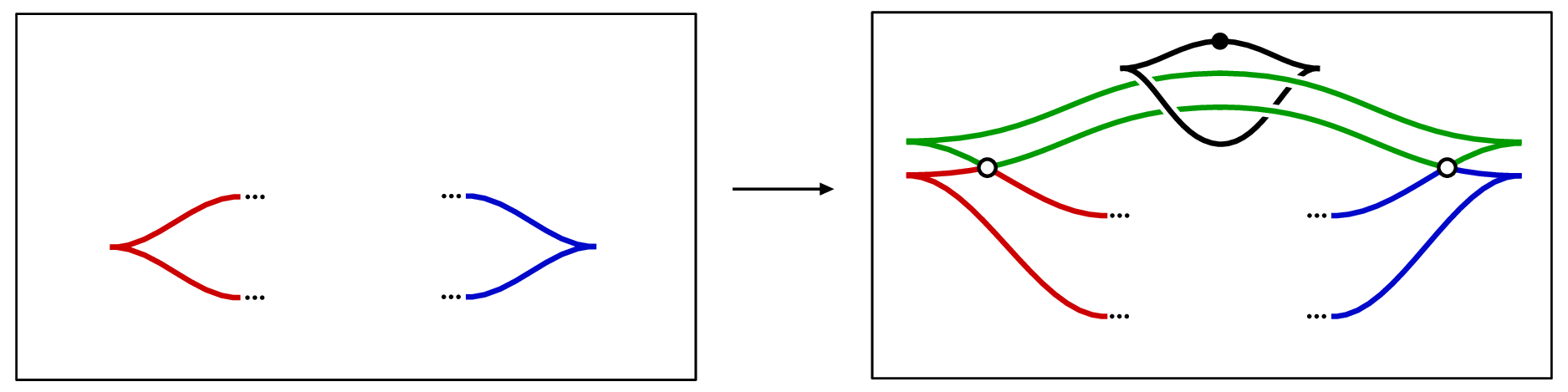}   
        
        \put(68,21.75){\tiny $(+1)$}
	\end{overpic}
	\caption{Attaching a coupled Weinstein-Lagrangian $1$-handle and extending the normal ruling of $\Lambda$ (left) to a normal ruling of $\Lambda_0$ (right).}
	\label{fig:NR1handle}
\end{figure}

Now we attach the Weinstein $2$-handles. As in the proof of \cref{thm:main3}, the assumption of regularity provides a sequence of Legendrian isotopies, handle slides, and handle births that converts the diagram into one in geometrically canceling position; we again intentionally delay any cancellations. By general position and contact isotopy extension, viewing $1$-handles as dotted circles and ``forgetting'' all surgeries in the diagram, this induces an isotopy of $\Lambda_0$ viewed as living in $S^3$.

\begin{figure}[ht]
    
	\begin{overpic}[scale=.356]{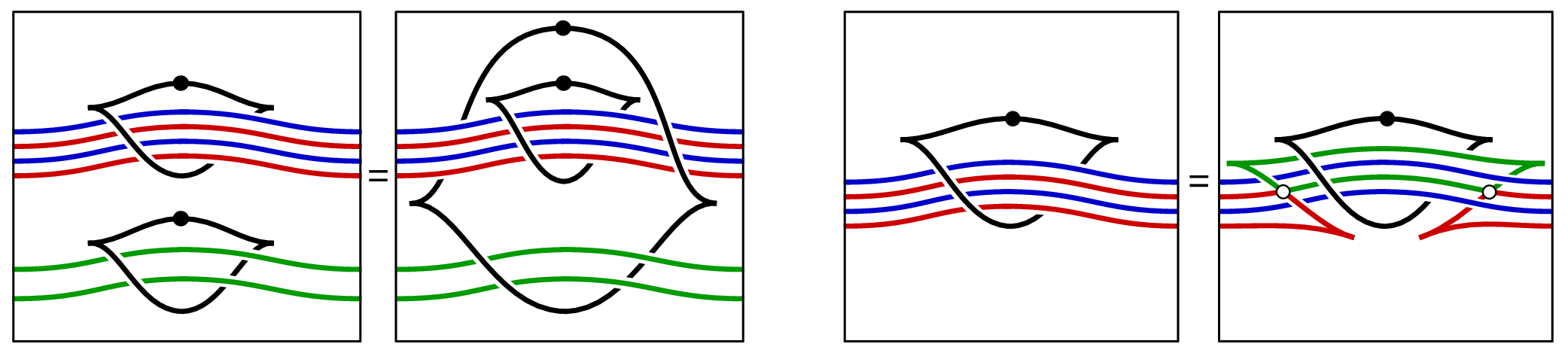}   
        
        \put(68,15.75){\tiny $(+1)$}
        \put(92,15.75){\tiny $(+1)$}

        \put(14,17.75){\tiny $(+1)$}
        \put(36.75,17.75){\tiny $(+1)$}
        \put(40.75,19){\tiny $(+1)$}
        \put(14,9){\tiny $(+1)$}
	\end{overpic}
	\caption{The left two panels depict a $1$-handle slide, using the model from \cref{fig:slidemodel} and a sequence of Reidemeister moves. The right two panels depict a slide of the bottom red strand of $\Lambda_0$ across the $1$-handle, and the extension of the normal ruling.}
	\label{fig:NR1handleslide}
\end{figure}

Note that any $1$-handle slides preserve the companion path property described above, along with slides of $\Lambda_0$ across $1$-handles (which correspond to isotopy in $\#^n S^1\times S^2$); see \cref{fig:NR1handleslide}. Therefore, we may assume that $\Lambda_0$, equipped with a $1$-handle-compatible normal ruling, is drawn in a Weinstein Kirby diagram in geometrically canceling position. By additional slides over the $1$-handles we may permute the Reeb-height of all Legendrian strands passing through $1$-handles so that, for each $1$-handle, the canceling $2$-handle strand has largest Reeb-coordinate and the necessarily even number of strands associated to $\Lambda_0$ lie below. 

It remains to slide $\Lambda_0$ across the $2$-handles sufficiently many times to make way for the handle cancellation. We must argue that the resulting link $\Lambda_+$, obtained after the handle slides, inherits a normal ruling from $\Lambda_0$. We may argue one $1$-handle at the time, so we assume without loss of generality that the Kirby diagram contains one $1$-handle and one geometrically canceling $2$-handle attached along a knot $\Lambda$. We moreover assume that all strands of $\Lambda \cup \Lambda_0$ intersect the belt sphere along a Reeb chord $\zeta$ connecting the bottom-most strand of $\Lambda_0$ to the unique strand of $\Lambda$. See the left side of \cref{fig:NR2slide}
 
Let $2k$ denote the number of strands of $\Lambda_0$ passing through the $1$-handle, so that $\Lambda_+$ is obtained by $2k$-many handle slides of $\Lambda_0$ up across the $(-1)$-surgery along $\Lambda$. Next, let $B$ be a Darboux neighborhood of the Reeb chord $\zeta$ which has small $x$-thickness but very large $y$-thickness; here we refer to the standard coordinates $dz - y\, dx$. Denote $M := S^3 - B$. Appealing to the local model for a Legendrian handle slide (see \cref{fig:slidemodel}), we make the following observations about $\Lambda_+$:
\begin{enumerate}
    \item Letting $\Lambda_*^M := \Lambda_* \cap M$, we have $\Lambda_+^M = \Lambda_0^M \cup \Lambda^M(2k)$, where $\Lambda^M(2k)$ denotes a contact-framed $2k$-copy of $\Lambda^M$, and 
    \item likewise letting $\Lambda_+^B := \Lambda_+ \cap B$, we have $\Lambda_+^B = C_R(2k)\cup C_L(2k)$ where $C_R,C_L\subset B$ are right (resp. left) cusp arcs connecting the left (resp. right) endpoints of $\Lambda_0\cap \partial B$ and $\Lambda \cap \partial B$.
\end{enumerate}
See the right side of \cref{fig:NR2slide}.

We induce a normal ruling of $\Lambda_+$ separately in $B$ and in $M$, and glue the rulings together to complete the proof. To this end we make use of work of Ng and Rutherford \cite{ng2013satsrulings} on normal rulings of satellites. Broadly speaking, a special case of their Theorem 3.6 (with $L^{\tau} = \emptyset$) implies that normal rulings of patterns in $J^1(S^1)$ induce normal rulings of satellites. Let $P\subset J^1(S^1)$ denote the pattern link of $2k$ parallel strands, and let $\rho_P$ be the normal ruling --- now thought of as a fixed-point free involution on the $2k$-many strands --- agreeing with the ruling of the $\Lambda_0$-strands passing through the $1$-handle.

\begin{figure}[ht]
    
	\begin{overpic}[scale=.346]{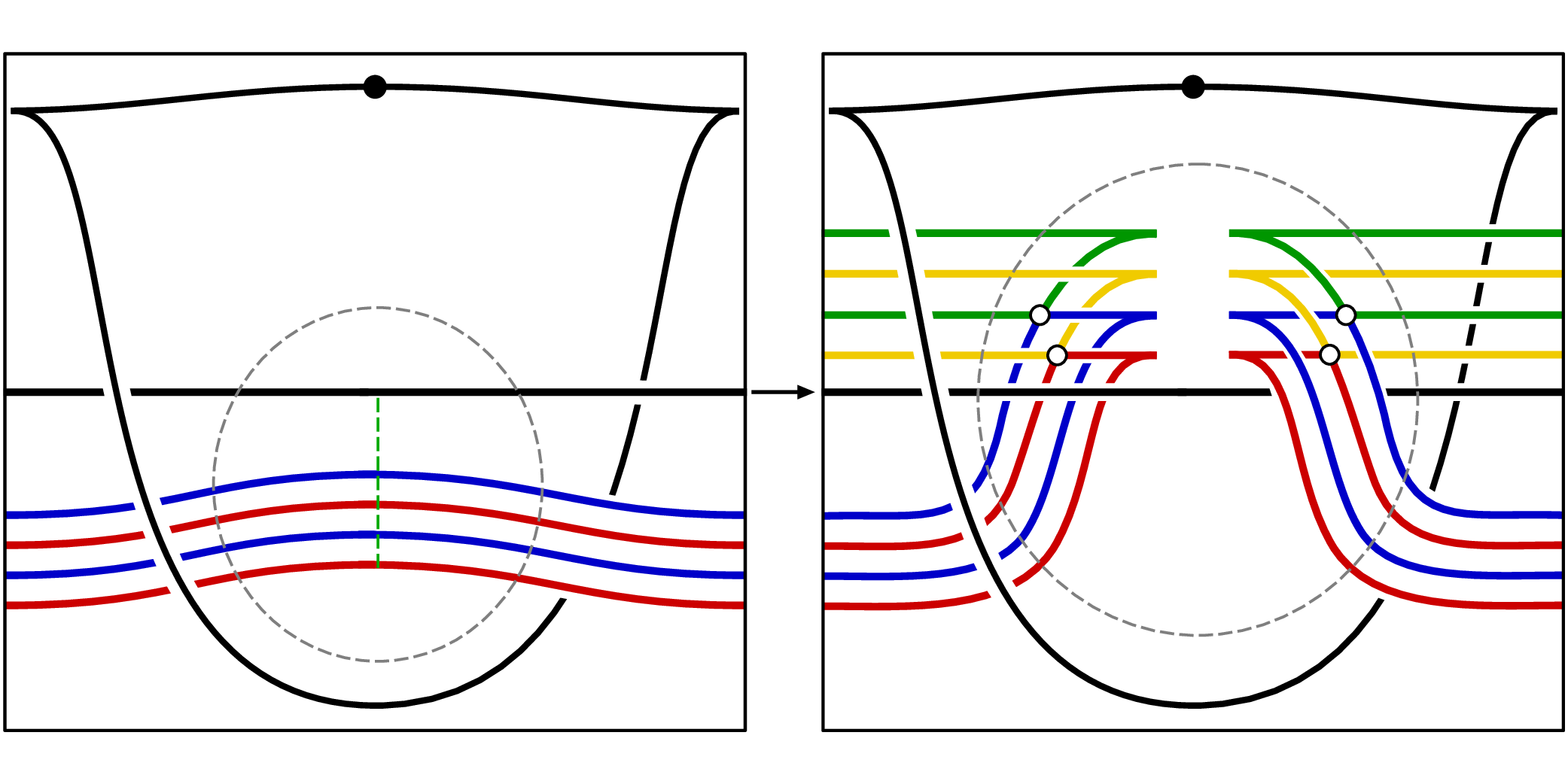}   
        
        \put(15,42.25){\tiny $(+1)$}
        \put(1.5,23){\tiny $(-1)$}
        \put(44,22.5){\footnotesize $\Lambda$}
        \put(30,30){\footnotesize \textcolor{gray}{$B$}}

         \put(67,42.25){\tiny $(+1)$}
        \put(53,23){\tiny $(-1)$}
        \put(97,22.5){\footnotesize $\Lambda$}
        \put(82,40){\footnotesize \textcolor{gray}{$B$}}

	\end{overpic}
	\caption{Extending a normal ruling of $2k=4$ $\Lambda_0$-strands passing through a $1$-handle via a slide across the geometrically canceling $2$-handle.}
	\label{fig:NR2slide}
\end{figure}

By \cite[Theorem 3.6]{ng2013satsrulings}, the $2k$-copy $\Lambda(2k) = \Sigma(\Lambda, P)$ inherits a normal ruling from $\rho_P$. Removing $B$, we view $\Lambda^M(2k)$ as being equipped with a ``relative'' normal ruling. As this ruling involves no switched crossings between strands of $\Lambda^M(2k)$ and strands of $\Lambda_0^M$, the tangle $\Lambda_+^M = \Lambda_0^M \cup \Lambda^M(2k)$ is relatively normally ruled. Inside $B$, we similarly apply \cite[Theorem 3.6]{ng2013satsrulings} to the cusped arcs $C_R\cup C_L$ to obtain a relative normal ruling of $C_R(2k) \cup C_L(2k) = \Sigma(C_R \cup C_L, P)$ inside $B$. By Ng and Rurtherford's cusp lemma \cite[Lemma 3.4]{ng2013satsrulings}, the \textit{thin} part of this ruling --- i.e., the local involution of the satellite corresponding to a point $p\in C_R\cup C_L$ in the companion tangle --- agrees above and below the cusps. The thin part of the ruling therefore agrees with the ruling of $\Lambda_0^M$ near $\partial B$ by definition of $\rho_P$, and agrees with the thin part of the ruling of $\Lambda^M(2k)$ near $\partial B$ as the latter is a $P$-satellite. Therefore, we may coherently glue the rulings together to obtain, after canceling the Weinstein handles, a normal ruling of 
\[
\Lambda_+ = \Lambda_+^M \cup \Lambda_+^B =  [\Lambda_0^M \cup \Lambda^M(2k)]\, \cup\, [C_R(2k) \cup C_L(2k)] \subset S^3. 
\]
The right side of \cref{fig:NR2slide} gives an example when $2k=4$. 
\end{proof}
\section{Regularly slice vs. decomposably slice vs. strongly decomposably slice}\label{sec:nec}

We close this article by probing the lower rungs of symplectic slice-ribbon (see \cref{fig:SRgradations}) with examples and additional questions. \cref{subsec:regvsdec} considers regularity vs. decomposability, while \cref{subsec:decvsstrong} turns to the question of distinguishing the stronger notion of decomposability from its non-commutable counterpart. 

\subsection{Regularly slice vs. decomposably slice}\label{subsec:regvsdec}

We begin with a discussion of \cref{conj:slice}, framed around a specific Legendrian knot that arose in attempts to find a counterexample. While the knot fails to disprove \cref{conj:slice} --- it is, in fact, decomposably slice --- we hope its construction illuminates the maneuvering necessary to conclude decomposability from regularity, or, if the conclusion of \cref{conj:slice} is false, inspires an actual counterexample.  

\begin{figure}[ht]
\vskip-0.3cm
	\begin{overpic}[scale=.25]{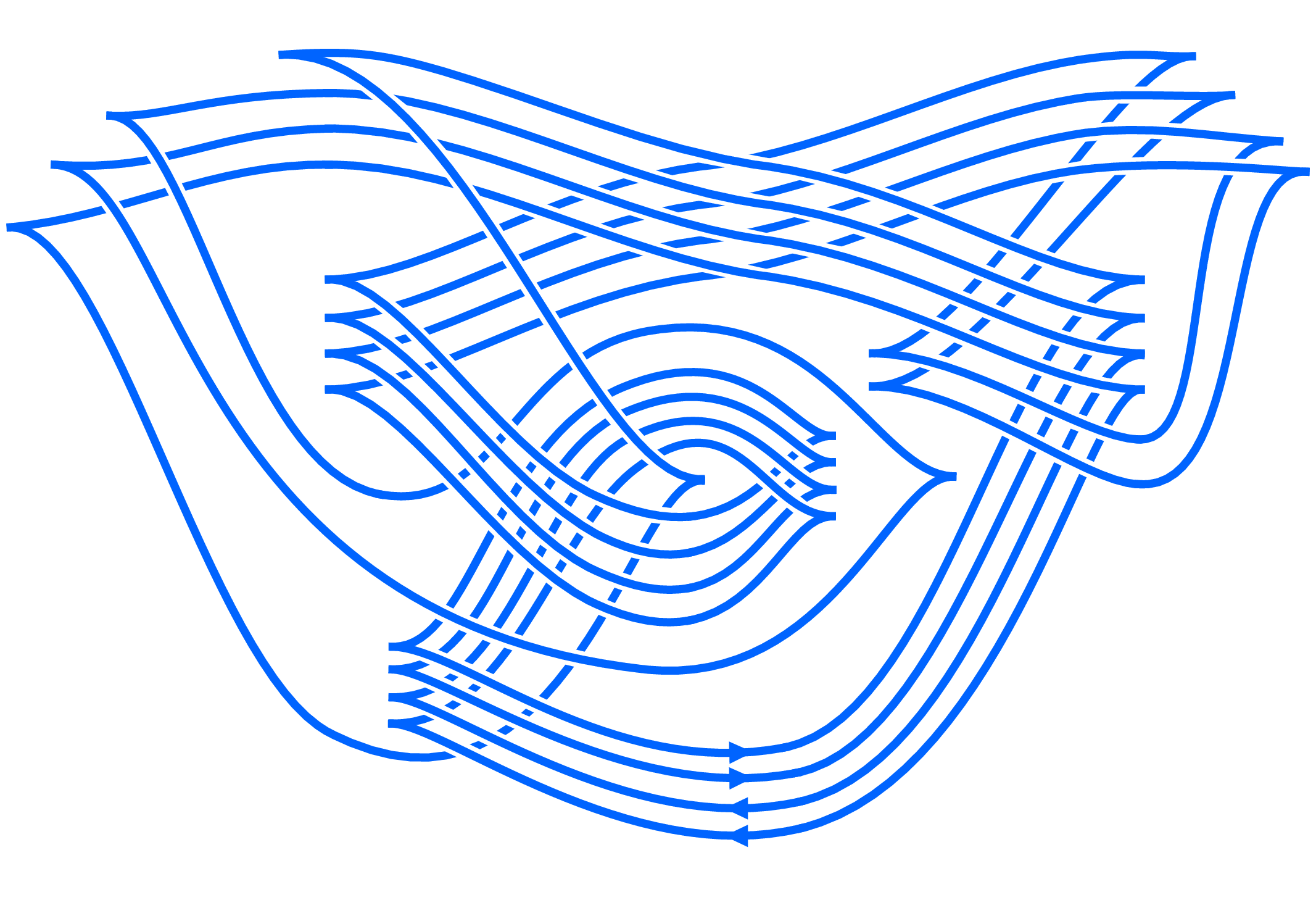}   
    \put(13,13){ \textcolor{lightblue}{$\Lambda$}}
	\end{overpic}
 \vskip-0.3cm
	\caption{A non-obviously decomposably slice knot.}
	\label{fig:candidate}
\end{figure}

The knot $\Lambda$ in question is given in \cref{fig:candidate}. We claim that $\Lambda$ is decomposably slice, but not obviously so. By ``not obviously so'' we mean that there is a natural sequence of two nested orientable pinch moves that may be performed, for instance, in the region indicated by the dashed box in the top left panel of \cref{fig:candidatetry}, but that these pinch moves result in a non-fillable link; one of the components is stabilized, and fillable Legendrians have maximum Thurston-Bennequin invariant in their smooth isotopy class. Some time spent with the diagram will convince the reader that, as presented, there are no other pinch moves which induce a filling.

\begin{figure}[hbt]
    
	\begin{overpic}[scale=.2]{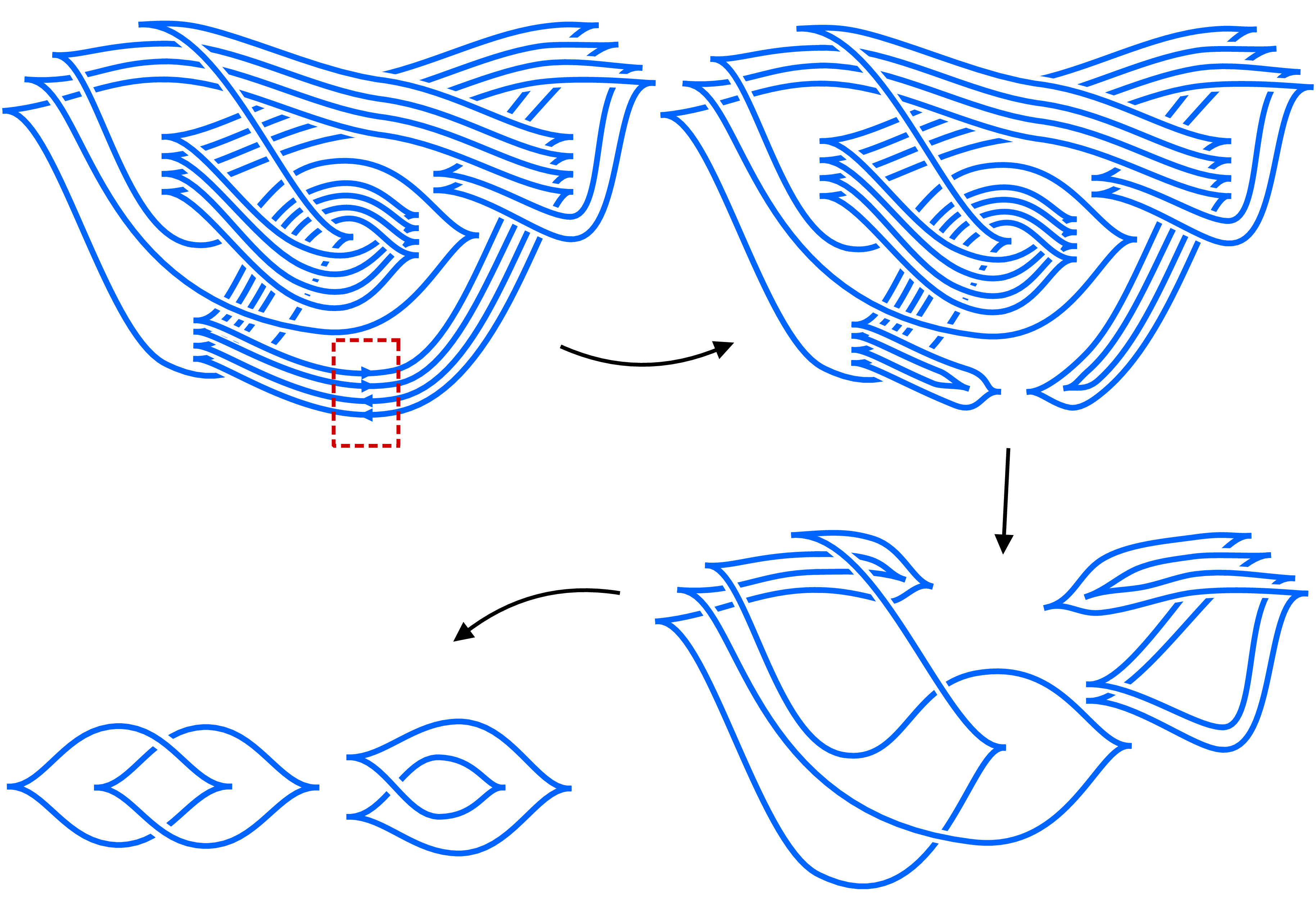}   
    
	\end{overpic}
	\caption{A natural sequence of pinch moves that fails to yield a slice disk.}
	\label{fig:candidatetry}
\end{figure}

Therefore, to conclude decomposable sliceness, it is necessary to perform a Legendrian isotopy to a different diagram that does admit pinch moves descending to a slice disk. Rather than do so directly, we first present the knot in \cref{fig:candidate} as the unknot in a Weinstein structure homotopic to the symplectization as in \cite[Theorem 1.10]{conway2017symplectic}.

Consider the left side of \cref{fig:candidate2}, which depicts a Weinstein handlebody diagram with a max-tb unknot linked with the attaching sphere of the $2$-handle. A sequence of Legendrian isotopies puts the Weinstein structure in geometrically canceling position as on the right. After sliding the four blue strands across the $(-1)$ surgery and then erasing the Weinstein handles, one obtains the front projection of $\Lambda$ in \cref{fig:candidate}. As we began with a max-tb unknot in the boundary of a Weinstein handlebody diagram homotopic to the symplectization, we conclude that $\Lambda$ is regularly slice.

\begin{figure}[ht]
    
	\begin{overpic}[scale=.347]{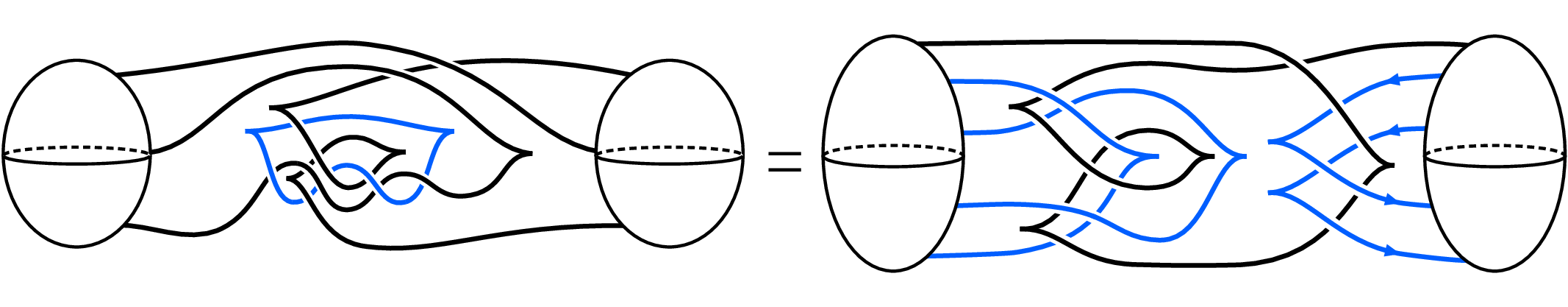}   
        \put(10,16.5){\tiny $(-1)$}
        \put(62,17.75){\tiny $(-1)$}
	\end{overpic}
	\caption{Alternate presentations of the knot in \cref{fig:candidate}.}
	\label{fig:candidate2}
\end{figure}

Next, observe that the pinch moves performed in \cref{fig:candidatetry} correspond to the pinch moves suggested by the $1$-handle on the right side of \cref{fig:candidate2}. We will see that this presentation gives a warning that the pinch moves will fail to descend to a slice disk, thanks to the unique normal ruling of the unknot, drawn in \cref{fig:candidateruling}. 

\begin{figure}[ht]
    
	\begin{overpic}[scale=.347]{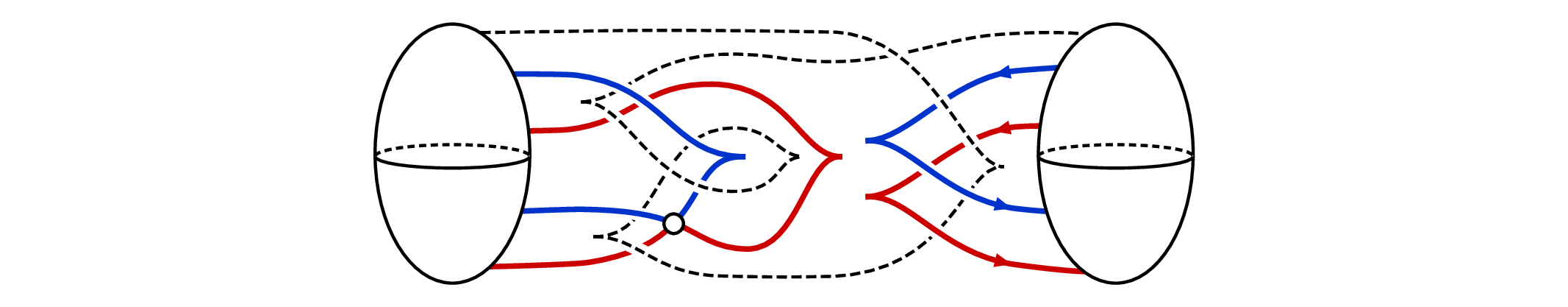}   
        \put(35,18.5){\tiny $(-1)$}
	\end{overpic}
	\caption{The normal ruling of the unknot.}
	\label{fig:candidateruling}
\end{figure}

Observe that pinching companion paths of a normal ruling produces a link that also admits a normal ruling --- this is the observation ``decomposable cobordisms induce canonical normal rulings'' in the opposite symplectization direction --- while there is no reason to expect a link produced by pinching non-paired paths to have a normal ruling. We reason that to conclude decomposability, it is preferable, if possible, to isotope the unknot in the geometrically canceling diagram so that companion paths pass through the $1$-handle in a ``companion pinchable'' order. A local isotopy that reorders the heights of companion paths along the $1$-handle near a Reeb chord is not possible; this is always the case, as discussed in \cref{fig:strongdec}.

The point of our example is that such an isotopy is not possible even semi-locally, in the following sense. To permute the order of companion paths as they pass through the $1$-handle, one must push a crossing of the unknot across the belt sphere. In our case, there are three crossings, but the region between each crossing and the belt sphere is obstructed by a cusp of the $2$-handle attaching sphere; see \cref{fig:regions}. 

\begin{wrapfigure}{r}{0.3\textwidth}
  \begin{overpic}[scale=.2]{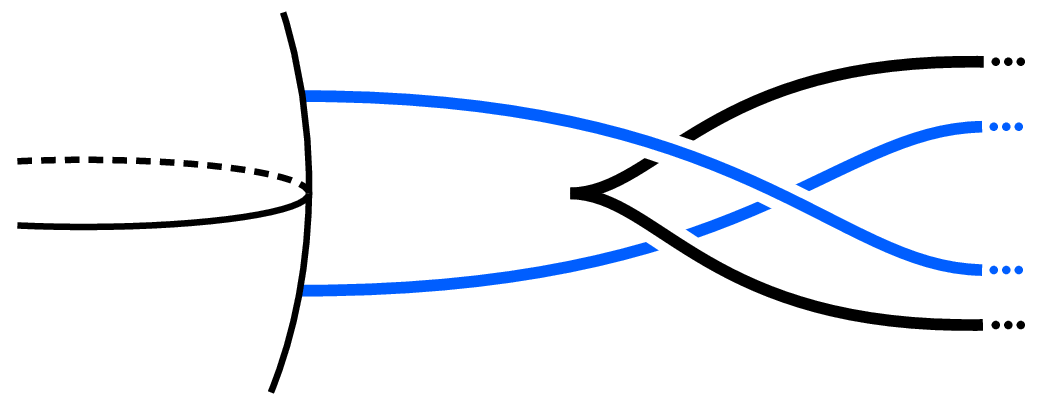}   
        \put(70,40.5){\footnotesize $(-1)$}
	\end{overpic}
  \caption{}
  \label{fig:regions}
\end{wrapfigure} 

There is no Legendrian isotopy supported in a neighborhood of such a region which pushes the crossing through the belt sphere (while preserving geometric intersection of the Weinstein handles). Consequently, the desired Legendrian isotopy, should it exist, is more global in nature. One successful isotopy is given in \cref{fig:candidatenew}.

The companion paths of the normal ruling of the unknot in the lower left panel of \cref{fig:candidatenew} now pass through the $1$-handle in such a way that the nested pinch moves are performed on matching colors. Performing these pinch moves yields a three-component max-tb unlink. The normal ruling is drawn in the top left of \cref{fig:candidatefinal}, along with the subsequent pinch moves and their nested surgery arcs. Sliding the surgery arcs and canceling the handles produces a (nested) Legendrian surgery presentation of $\Lambda$ starting with a max-tb unlink. We conclude that $\Lambda$ is decomposably slice. 

\begin{figure}[htb]
    
	\begin{overpic}[scale=.3]{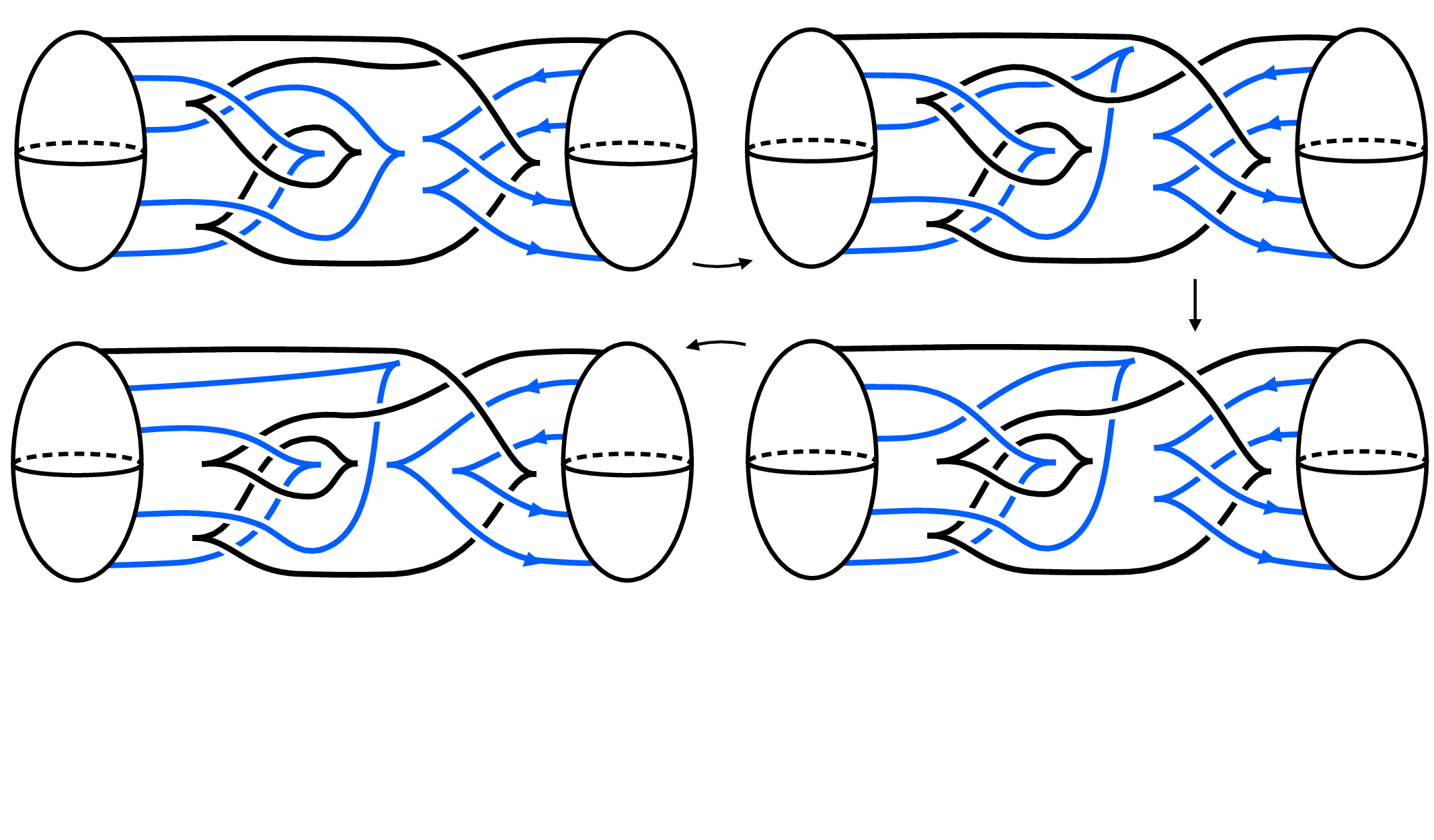}   
        \put(10,57.5){\tiny $(-1)$}
        \put(62,57.75){\tiny $(-1)$}
        \put(10,35.5){\tiny $(-1)$}
        \put(62,35.75){\tiny $(-1)$}
	\end{overpic}
    \vskip-1.75cm
	\caption{Isotoping the unknot in the geometrically canceling diagram.}
	\label{fig:candidatenew}
\end{figure}

\begin{figure}[ht]
    
	\begin{overpic}[scale=.345]{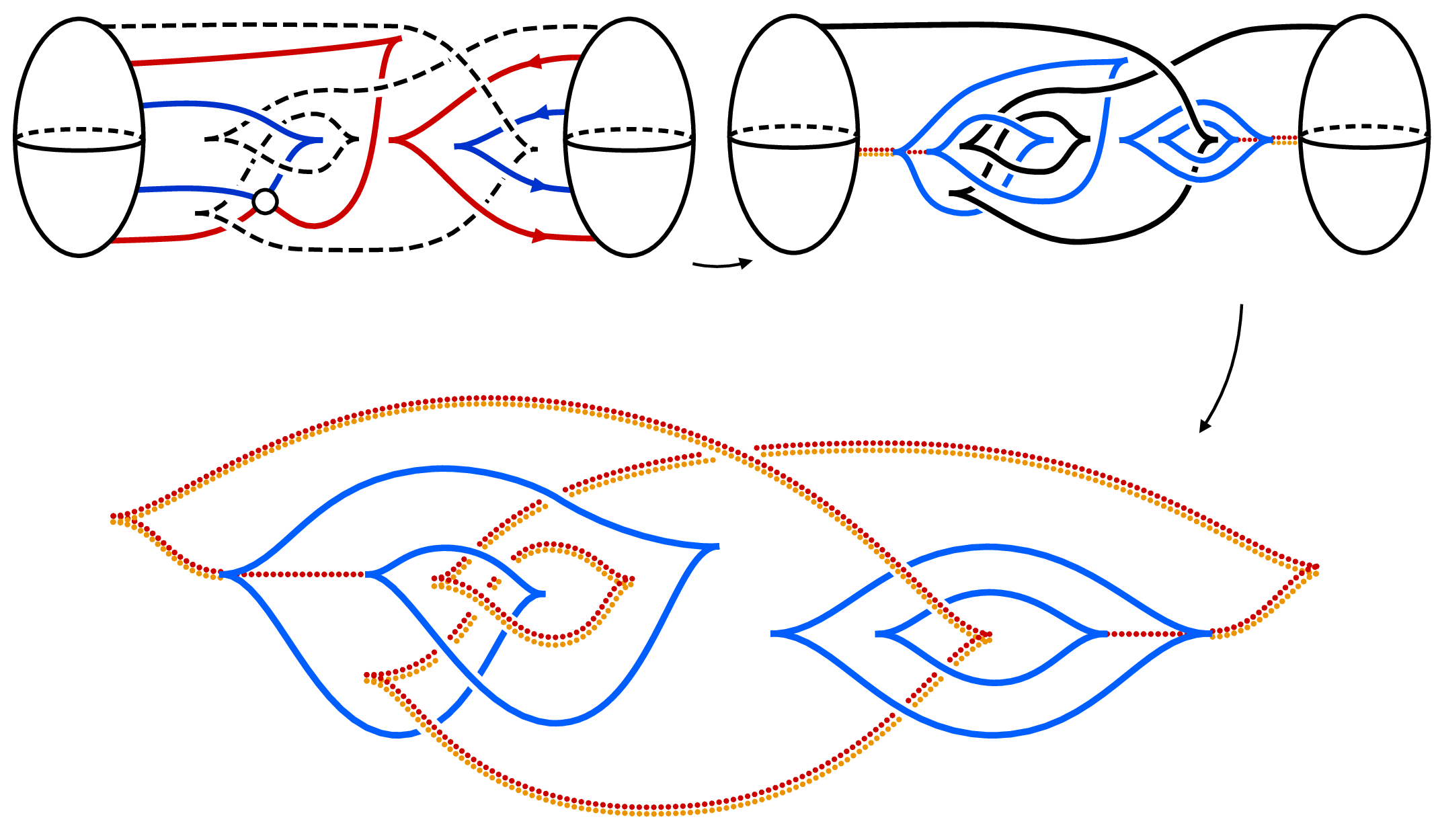}   
        \put(12,58){\tiny $(-1)$}
        \put(64,58){\tiny $(-1)$}
	\end{overpic}
	\caption{A pinch sequence on the unknot that produces a max-tb unlink. Sliding the nested surgery arcs to cancel the Weinstein handles results in the nested surgery presentation of a decomposable slice disk for $\Lambda$. The double-dotted surgery arcs represent non-commutable nested surgeries, where the orange is performed first.}
	\label{fig:candidatefinal}
\end{figure}

With this example in mind, we summarize with two questions that form the basis for a strategy toward solving \cref{conj:slice}. 

\begin{question}\label{q:f1}
Perform a pinch move on companion paths in the canonical normal ruling of a decomposably slice link. Is the resulting link still decomposably slice?   
\end{question}

\noindent Note that Hayden \cite{hayden2019crosssections} showed that one can pinch a max-tb unknot down to a non-trivial decomposably slice link, so it is not the case that companion pinch moves are always topologically simplifying (as they are in \cref{fig:candidatefinal}). However, it seems plausible that decomposable sliceness is preserved. 

\begin{question}\label{q:f2}
Consider a max-tb unknot in the boundary of a geometrically canceling Weinstein handlebody diagram. Can one always isotope the unknot in the complement of the $2$-handle attaching spheres so that companion paths of its unique normal ruling pass through the $1$-handles in a pinchable order? 
\end{question}

\noindent An affirmative answer to both \cref{q:f1} and \cref{q:f2} would provide a solution to \cref{conj:slice}.

\subsection{Decomposably slice vs. strongly decomposably slice}\label{subsec:decvsstrong}

Although \cref{fig:candidatefinal} shows that the knot $\Lambda$ in \cref{fig:candidate} is decomposably slice, the two pinch moves are nested. That is, it is necessary to perform one before the other, and locally the order cannot be commuted. In other words, we have shown that $\Lambda$ is decomposably slice, but not necessarily strongly decomposably slice. Appealing to same idea, a simpler example is drawn in \cref{fig:finalex}. The nested pinch moves given by the red box on the right descend to a slice disk.

\begin{figure}[ht]
    
	\begin{overpic}[scale=.347]{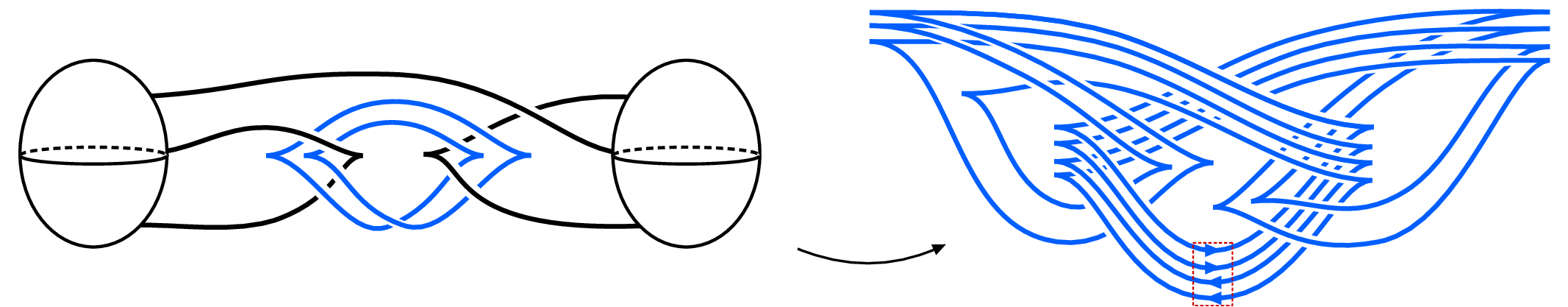}   
        \put(12,15){\tiny $(-1)$}
        \put(92,3){\small \textcolor{lightblue}{$\Lambda'$}}
        
	\end{overpic}
	\caption{A decomposably slice knot. Is it strongly decomposably slice?}
	\label{fig:finalex}
\end{figure}

In fact, the construction of $\Lambda$ and $\Lambda'$ has as underlying mechanism the following observation. Let $\Delta_2=\sigma_1$ denote the right-handed half-twist on two strands, viewed as a pattern knot in $J^1(S^1)$. Then if $U$ is the max-tb unknot, the satellite $\Sigma(U, \Delta_2^3)$ is Legendrian isotopic to $U$. Consequently, if $\Lambda$ is (regularly) slice, then $\Sigma(\Lambda, \Delta_2^3)$ is (regularly) slice, non-parenthetically following from \cite{cornwell2016concordance}, while \cref{thm:main_sat} gives preservation of regularity. See \cref{fig:satellitestrong} for the case of $\overline{9_{46}}$.

\begin{figure}[ht]
	\begin{overpic}[scale=.3465]{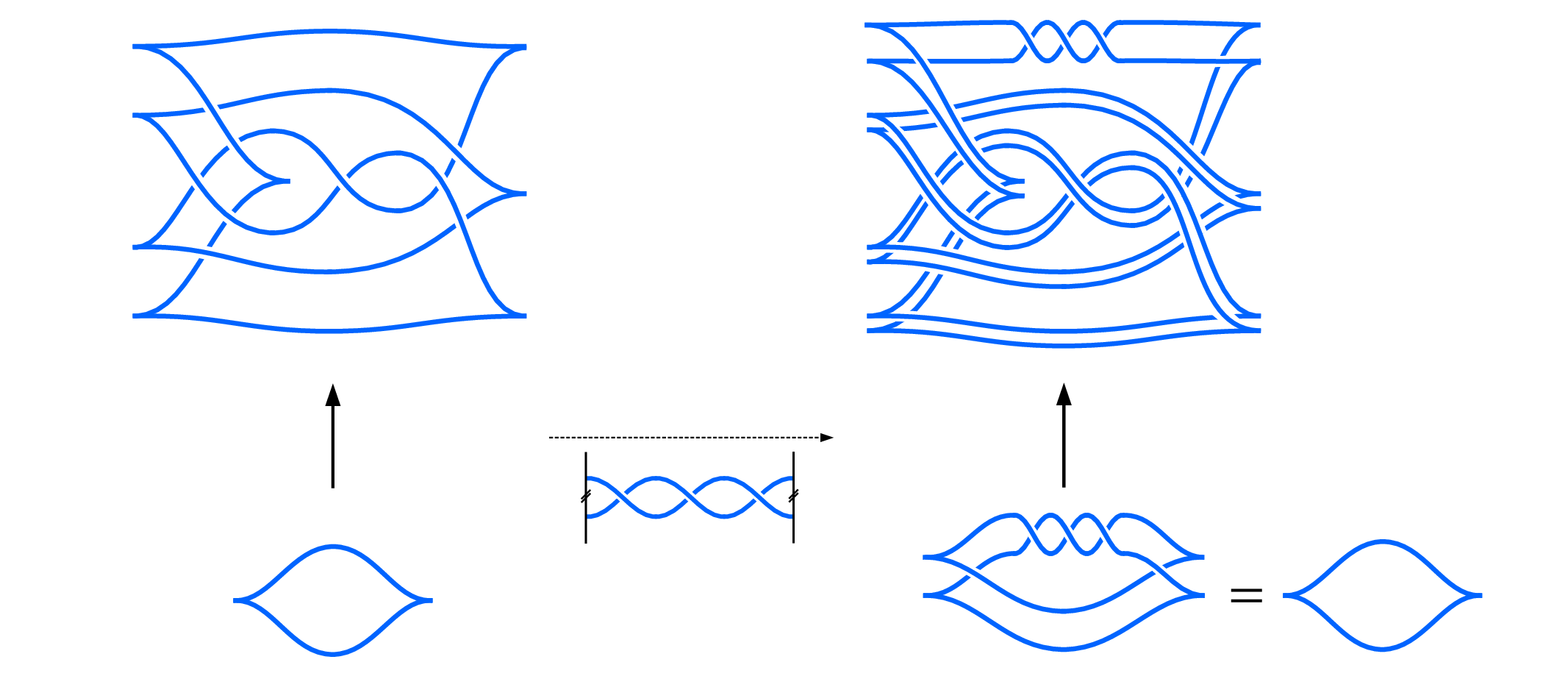}   
        \put(40.25,17){\footnotesize satellite}
        \put(80.25,26){\footnotesize \textcolor{lightblue}{$\Lambda''$}}
	\end{overpic}
	\caption{Is the $\Delta_2^3$-satellite of $\overline{9_{46}}$ strongly decomposably slice?}
	\label{fig:satellitestrong}
\end{figure}

In fact, decomposability is also preserved:  

\begin{proposition}\label{prop:slicesats}
If $\Lambda$ is (strongly) decomposably slice, then $\Sigma(\Lambda, \Delta_2^3)$ is decomposably slice.
\end{proposition}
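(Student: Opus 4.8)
The plan is to reduce to a \emph{local} statement about the pattern $\Delta_2^3 \subset J^1(S^1)$ and then invoke the decomposability-preserving satellite operation of \cite[Theorem 1.6]{guadagni2022satellites}. Since $\Sigma(U, \Delta_2^3)$ is Legendrian isotopic to $U$ (the three right-handed half-twists cancel after a Legendrian isotopy in the solid torus, being a $2$-cusp unknot's worth of extra curling), the satellite operation on cobordisms should take a decomposable slice disk $D$ for $\Lambda$ and produce a decomposable slice disk for $\Sigma(\Lambda, \Delta_2^3)$. Concretely, I would first recall that $\Lambda$ being decomposably slice means $\Lambda = \mathrm{Surg}(\tilde U_\Lambda, G_\Lambda)$ for a max-tb unlink $\tilde U_\Lambda$ — or, if only weak decomposability is assumed, a sequence of unknot births and pinch moves descending $\Lambda$ to an unlink. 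The key point is that the satellite of the \emph{standard filling} of $U$ by the pattern $\Delta_2^3$ is itself decomposable: one must exhibit explicitly a sequence of unknot births and Legendrian surgeries that builds $\Sigma(U, \Delta_2^3)$ from the empty set (equivalently, pinches it down to a max-tb unknot), using that $\Sigma(U, \Delta_2^3)$ is Legendrian isotopic to $U$.

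The key steps, in order, are: (1) Verify that $\Sigma(U, \Delta_2^3)$ is decomposably slice — this amounts to a hands-on front-projection computation: isotope $\Sigma(U, \Delta_2^3)$ to a convenient diagram, locate contractible (or front-contractible, via \cref{lemma:front_contr}) Reeb chords, and carry out pinch moves that terminate at a max-tb unlink; the nesting/ordering of these pinches is exactly what forces the conclusion to be merely "decomposably slice" rather than "strongly decomposably slice". (2) Given the decomposable filling $D$ of $\Lambda$, apply the Legendrian satellite operation componentwise along the handle decomposition of $D$: each elementary piece of $D$ (unknot birth, Legendrian isotopy trace, Legendrian-surgery $1$-handle) has a satellited analogue, using \cite[Theorem 1.6]{guadagni2022satellites} to guarantee that the satellited pieces assemble into a genuine decomposable cobordism rather than just an exact one. (3) Check that at the bottom of the satellited filling one obtains the satellite of the max-tb unlink $\tilde U_\Lambda$ by $\Delta_2^3$ — but a satellite of an \emph{unlink} by $\Delta_2^3$ is, by step (1) applied to each component, again decomposably filled (each component becomes $\Sigma(U, \Delta_2^3)$, which is decomposably slice). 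Concatenating the decomposable filling from step (1) with the decomposable cobordism from step (2) yields a decomposable slice disk for $\Sigma(\Lambda, \Delta_2^3)$.

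The main obstacle I expect is step (2): making precise that the satellite operation of \cite[Theorem 1.6]{guadagni2022satellites} is compatible with an arbitrary handle decomposition of $D$ and does not merely produce an exact cobordism. In particular, the pattern $\Delta_2^3$ has nonzero rotation-number contribution / writhe, so when one satellites along a $1$-handle of $D$ one must account for the extra twisting (the "additional twists of the pattern link to account for nonzero genus" mentioned after \cref{cor:neg_ans}); one must verify that these twist corrections still leave the surgeries contractible. If the cited theorem handles only a subclass of decomposable cobordisms, the argument should be reorganized so that one satellites the \emph{specific} decomposable filling coming from a normal-ruling-compatible pinch sequence (à la \cref{thm:main3}), where the handle structure is standard enough for \cite{guadagni2022satellites} to apply. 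A secondary, more bookkeeping-level obstacle is confirming in step (1) that $\Sigma(U, \Delta_2^3)$, though Legendrian-isotopic to $U$, admits an \emph{honest} decomposable filling and not just a regular one — i.e., that the requisite Reeb chords can be made contractible via \cref{lemma:front_contr} — but this should follow from a direct inspection of the $\Delta_2^3$ front.
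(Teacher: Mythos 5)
Your proposal is essentially correct, but it follows a different route from the paper's own argument: you outsource the main step to the decomposability-preserving satellite operation of \cite[Theorem 1.6]{guadagni2022satellites}, which is precisely the alternative the paper acknowledges (with the specific choice of patterns $\Pi_-=\Pi_+=\Delta_2^2$, a detail your proposal leaves implicit) and then deliberately declines to follow. The paper instead gives a direct, self-contained induction on the number of surgery arcs in a \emph{strongly} decomposable presentation $\Lambda = \mathrm{Surg}(\tilde U, G)$: the surgery arcs form a tree, one peels off a leaf, positions the $\Delta_2^3$ pattern at the top of the corresponding contractible Reeb chord, and performs two nested pinch moves on the satellite so that the leaf component becomes $\Sigma(U_0,\Delta_2)$, a two-component max-tb unlink, with a second case to iterate when residual linking persists. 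What the direct proof buys is exactly the point of the section: it exhibits \emph{where} strong decomposability is lost (the two pinch moves on the satellite are nested and locally non-commutable), which feeds \cref{q:satstrong}; your citation-based route is shorter but opaque on this, and is constrained by the subclass restriction of the cited theorem --- a restriction you correctly flag as the main obstacle and for which your proposed fix (satellite a filling in standard form) is in the right spirit, since the hypothesis of the proposition supplies such a form. One simplification you missed: your step (1) does not require any front-projection pinch computation, because $\Sigma(U,\Delta_2^3)$ is Legendrian isotopic to $U$ and Legendrian isotopy is one of the decomposable moves, so $\Sigma(U,\Delta_2^3)$ is strongly decomposably slice for free; this observation is also what powers the base case of the paper's induction.
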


This result follows from the satellite operation of \cite[Theorem 1.6]{guadagni2022satellites} by choosing the patterns $\Pi_- = \Pi_+ = \Delta_2^2$ in their statement, coupled with the above observation that $\Sigma(U, \Delta_2^3) = U$. However, strong decomposability is not addressed in their work, and the point of \cref{prop:slicesats} in our discussion is that strongness is not obviously preserved. For this reason, we provide a direct and independent proof under the assumption of strong decomposability to clarify why strongness need not persist under a $\Delta_2^3$-satellite. 

\begin{proof}[Proof.]
Assume $\Lambda$ is strongly decomposably slice. Since the Euler characteristic of a disk is $1$, $\Lambda$ admits a presentation as $\mathrm{Surg}(\tilde{U}, G)$, where $\tilde{U} = U_0 \cup U_1 \cup \cdots \cup U_n$ is a max-tb unlink with $n+1$ components and $G = \{\gamma_1, \dots, \gamma_n\}$ is a set of $n$ surgery arcs for $\tilde{U}$. For brevity we refer to the minimum such $n$ as the \textit{surgery number} of the knot. We induct on $n$. The base case corresponds to when $\Lambda = U$ is the max-tb unknot; the claim then follows from the observation $\Sigma(U, \Delta_2^3) = U$ from above. 

Now we perform the induction. Assume $\Lambda_n$ has surgery number $n$, and that the claim holds for knots of surgery number $\leq n-1$. To the surgery presentation of $\Lambda_n$ as above we associate a graph $\Gamma(\Lambda_n)$, where the $n+1$ vertices correspond to the components of $\tilde{U}$, 
and two vertices are connected by an edge if the corresponding link components are connected by a surgery arc in $G$. Since $\Lambda_n$ is connected, Euler characteristic considerations again imply that $(V,E)$ is a tree. Consider a surgery arc $\gamma$ corresponding to an edge adjacent to a leaf of the tree. Then $\gamma$ corresponds to a contractible Reeb chord $\zeta$ on $\Lambda_n$ along which the pinch moves produces a link $\Lambda_{n-1} \cup U_0$, where $U_0$ is a (possibly relabeled) max-tb unknot component and $\Lambda_{n-1}$ is a strongly decomposably slice link with surgery number $\leq n-1$. Note, however, that $U_0$ may be linked with $\Lambda_{n-1}$; see \cite[Figure 8]{hayden2019crosssections} for an example. 

\vspace{2mm}
\noindent \textit{Case I: $U_0$ is unlinked with $\Lambda_{n-1}$.}
\vspace{2mm}

Consider a neighborhood of the Reeb chord $\zeta$. Assume without loss of generality that $\Lambda_{n-1}$ corresponds to the component on the left after the pinch move --- i.e., the component with the right cusp --- while $U_0$ corresponds to the component on the right. When we construct the satellite $\Sigma(\Lambda_n, \Delta_2^3)$, we are free to position the pattern anywhere along the companion knot; we deliberately do so near the top of the Reeb chord as in \cref{fig:satpinch}. 

\begin{figure}[ht]
	\begin{overpic}[scale=.34]{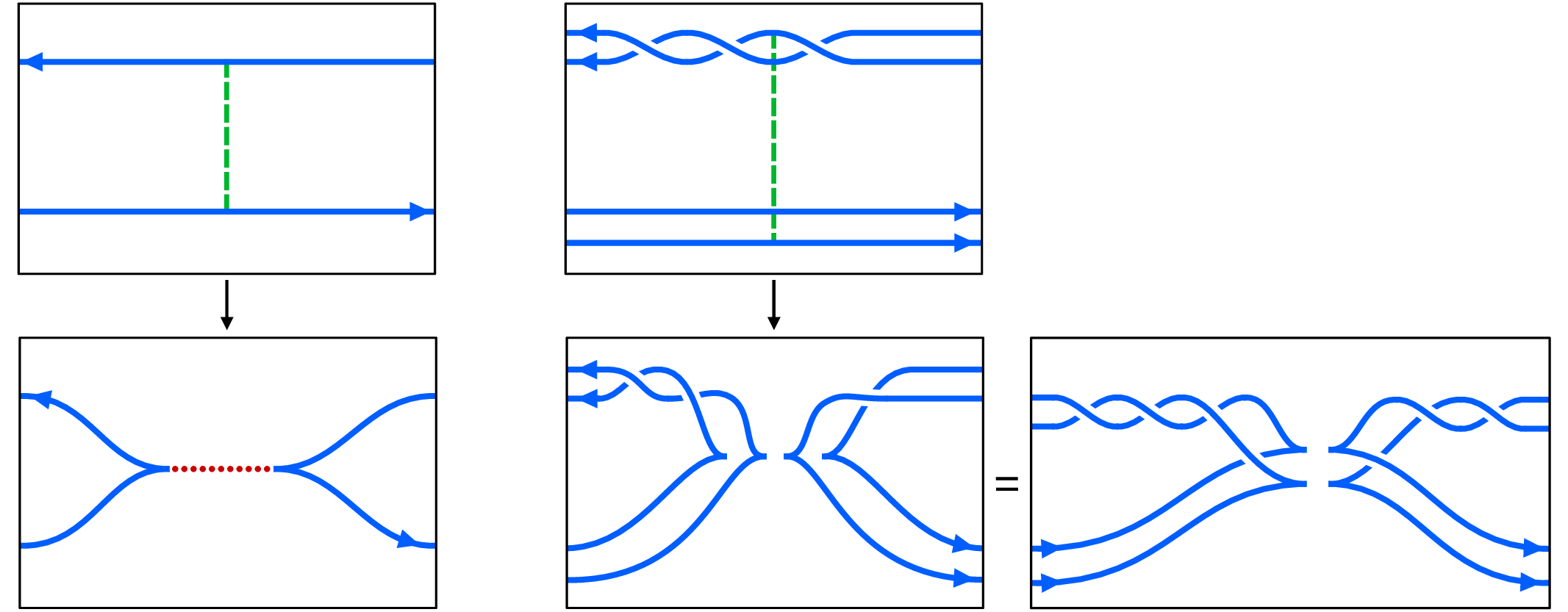}   
       \put(14,10){\footnotesize \textcolor{darkred}{$\gamma$}}
       \put(6,13){\footnotesize \textcolor{lightblue}{$\Lambda_{n-1}$}}
       \put(20,13){\footnotesize \textcolor{lightblue}{$U_0$}}
       \put(67,15.1){\footnotesize \textcolor{lightblue}{$\Sigma(\Lambda_{n-1}, \Delta_2^3)$}}
       \put(88,14.95){\footnotesize \textcolor{lightblue}{$U_0 \cup U_0'$}}

       \put(20,30){\footnotesize \textcolor{lightblue}{$\Lambda_n$}}
       \put(50.75,30){\footnotesize \textcolor{lightblue}{$\Sigma(\Lambda_{n}, \Delta_2^3)$}}
       
	\end{overpic}
	\caption{The proof of \cref{prop:slicesats}.}
	\label{fig:satpinch}
\end{figure}

With the pattern positioned accordingly, perform the two nested pinch moves on the satellite along the Reeb chord indicated in the figure. Observe that the leftmost part of the diagram, after an isotopy, is $\Sigma(\Lambda_{n-1}, \Delta_2^3)$. Next, observe that the rightmost part of the diagram is a $2$-cable of $U_0$ with framing given by a right-handed full-twist with respect to the contact framing, i.e., $\Sigma(U_0, \Delta_2)$. Since $\mathrm{tb}(U_0) = -1$, $\Sigma(U_0, \Delta_2)$ is a smooth $0$-framed $2$-cable of the unknot, which is a $2$-component unlink $U_0 \cup U_0'$. Both components are max-tb, and under the Case I assumption, are unlinked with $\Lambda_{n-1}$. Applying the inductive hypothesis to $\Lambda_{n-1}$ then gives the result. 

\vspace{2mm}
\noindent \textit{Case II: $U_0$ is linked with $\Lambda_{n-1}$.}
\vspace{2mm}

In this case, we begin by applying the same procedure in Case I. Namely, we satellite and perform two pinch moves using the pattern to end up with a link $\Sigma(\Lambda_{n-1}, \Delta_2^3)\cup U_0 \cup U_0'$. While $U_0 \cup U_0'$ is still a $2$-component max-tb unlink, and $\Sigma(\Lambda_{n-1}, \Delta_2^3)$ is individually decomposably slice by the inductive hypothesis, nontrivial linking prevents us for now from concluding that the whole link is decomposably slice. 

However, we press on, applying the same procedure to $\Sigma(\Lambda_{n-1}, \Delta_2^3)$. With the same maneuver in Case I we can identify a leaf of the graph $\Gamma(\Lambda_{n-1})$, position the pattern $\Delta_2^3$ near the top of the corresponding Reeb chord, and perform two more pinch moves to yield $\Sigma(\Lambda_{n-2}, \Delta_2^3)\cup U_1 \cup U_1' \cup U_0 \cup U_0'$, where $U_1 \cup U_1'$ is a $2$-component max-tb unlink and $\Lambda_{n-2}$ is strongly decomposably slice with surgery number $\leq n-2$. If there is still linking, we repeat. Eventually, since the base link $\tilde{U}$ in $\Lambda = \mathrm{Surg}(\Lambda, G)$ is a max-tb unlink, this process must terminate in a link of unlinked components that are all decomposably slice.
\end{proof}

Though the examples from this section ($\Lambda, \Lambda'$, $\Lambda''$, and more generally $\Delta_2^3$-satellites of strongly decomposably slice knots) fail to break symplectic slice-ribbon between regularity and decomposability, \cref{prop:slicesats} suggests they can still probe the divide between the lowest symplectic rungs of \cref{fig:SRgradations}:

\begin{question}\label{q:satstrong}
Let $\Lambda\neq U$ be a (strongly) decomposably slice knot. Is $\Sigma(\Lambda, \Delta_2^3)$ strongly decomposably slice?    
\end{question}

\bibliography{references}

\newcommand{\etalchar}[1]{$^{#1}$}
\providecommand{\bysame}{\leavevmode\hbox to3em{\hrulefill}\thinspace}
\providecommand{\MR}{\relax\ifhmode\unskip\space\fi MR }
\providecommand{\MRhref}[2]{%
  \href{http://www.ams.org/mathscinet-getitem?mr=#1}{#2}
}
\providecommand{\href}[2]{#2}
\begin{thebibliography}{CCPR{\etalchar{+}}24}

\bibitem[AC65]{andrews1965curtis}
J.J. Andrews and M.L. Curtis, \emph{Free groups and handlebodies}, Proc. {A}mer. {M}ath. {S}oc. (1965), no.~16, 192--195.

\bibitem[AK85]{akbulut1985potential}
Selman Akbulut and Robion Kirby, \emph{A potential smooth counterexample in dimension 4 to the {P}oincaré conjecture, the {S}choenflies conjecture, and the {A}ndrews-{C}urtis conjecture}, Topology \textbf{24} (1985), no.~4, 375--390.

\bibitem[Akb10]{akbulut2009cappell}
Selman Akbulut, \emph{{C}appell-{S}haneson homotopy spheres are standard}, Ann. of Math. \textbf{171} (2010), no.~3, 2171--2175.

\bibitem[Bak16]{baker2016concordancefibered}
Kenneth~L. Baker, \emph{A note on the concordance of fibered knots}, J. Topol. \textbf{9} (2016), no.~1, 1--4.

\bibitem[BHH23]{BHH23}
Joseph Breen, Ko~Honda, and Yang Huang, \emph{The {G}iroux correspondence in arbitrary dimensions}, arXiv:2307.02317, 2023.

\bibitem[BLL{\etalchar{+}}21]{blackwell2021lagrangians}
Sarah Blackwell, No{\'e}mie Legout, Caitlin Leverson, Ma{\"y}lis Limouzineau, Ziva Myer, Yu~Pan, Samantha Pezzimenti, Lara~Simone Su{\'a}rez, and Lisa Traynor, \emph{Constructions of {L}agrangian cobordisms}, pp.~245--272, Springer International Publishing, Cham, 2021.

\bibitem[BLW22]{baldwin2022lagrangian}
John~A. Baldwin, Tye Lidman, and C.-M.~Michael Wong, \emph{{Lagrangian Cobordisms and Legendrian Invariants in Knot Floer Homology}}, Michigan Math. J. \textbf{71} (2022), no.~1, 145--175.

\bibitem[BO01]{boileau2001quasipositive}
Michel Boileau and Stepan Orevkov, \emph{Quasi-positivité d'une courbe analytique dans une boule pseudo-convexe}, C. R. Acad. Sci. Paris \textbf{332} (2001), no.~9, 825--830.

\bibitem[Cas22]{casals2022planecurves}
Roger Casals, \emph{Lagrangian skeleta and plane curve singularities}, J. Fixed Point Theory Appl. \textbf{24} (2022), no.~34, 1--43.

\bibitem[CCPR{\etalchar{+}}24]{chen2024nonorientable}
Linyi Chen, Grant Crider-Phillips, Braeden Reinoso, Joshua Sabloff, and Leyu Yao, \emph{Non-orientable {L}agrangian fillings of {L}egendrian knots}, Math. Proc. Cambridge Philos. Soc. \textbf{176} (2024), no.~1, 123–153.

\bibitem[CE12]{cieliebak2012stein}
Kai Cieliebak and Yakov Eliashberg, \emph{{From Stein to Weinstein and Back: Symplectic Geometry of Affine Complex Manifolds}}, American Mathematical Society colloquium publications, American Mathematical Society, 2012.

\bibitem[CET19]{conway2017symplectic}
James Conway, John~B. Etnyre, and Bülent Tosun, \emph{{Symplectic Fillings, Contact Surgeries, and Lagrangian Disks}}, Int. Math. Res. Not. IMRN \textbf{2021} (2019), no.~8, 6020--6050.

\bibitem[CG22]{casals2022infinitely}
Roger Casals and Honghao Gao, \emph{{Infinitely many Lagrangian fillings}}, Ann. of Math. \textbf{195} (2022), no.~1, 207--249.

\bibitem[CGHS14]{cao2014topologicallydistinct}
Chang Cao, Nathaniel Gallup, Kyle Hayden, and Joshua~M. Sabloff, \emph{Topologically distinct {L}agrangian and symplectic fillings}, Math. Res. Lett. (2014), no.~1, 85--99.

\bibitem[Cha10]{chantraine2010concordance}
Baptiste Chantraine, \emph{{Lagrangian concordance of Legendrian knots}}, Algebr. Geom. Topol. \textbf{10} (2010), no.~1, 63--85.

\bibitem[Cha12]{chantraine2012noncollarable}
Baptiste Chantraine, \emph{Some non-collarable slices of {L}agrangian surfaces}, Bull. Lond. Math. Soc. \textbf{44} (2012), no.~5, 981--987.

\bibitem[CNS16]{cornwell2016concordance}
Christopher Cornwell, Lenhard Ng, and Steven Sivek, \emph{Obstructions to {L}agrangian concordance}, Algebr. Geom. Topol. \textbf{16} (2016), no.~2, 797–824.

\bibitem[CP07]{chekanov2007pushkar}
Yu~Chekanov and P.E. Pushkar, \emph{Combinatorics of fronts of {L}egendrian links and the {A}rnol'd 4-conjectures}, Russian Math. Surveys \textbf{60} (2007), 95--149.

\bibitem[CS76]{cappell1976newfourmanifolds}
Sylvain~E. Cappell and Julius~L. Shaneson, \emph{Some new four-manifolds}, Ann. of Math. \textbf{104} (1976), no.~1, 61--72.

\bibitem[DG09]{Ding2009HandleMI}
Fan Ding and Hansjörg Geiges, \emph{Handle moves in contact surgery diagrams}, J. Topol. \textbf{2} (2009), no.~1, 105–--122.

\bibitem[DR16]{dimitroglourizell2016ambient}
Georgios Dimitroglou~Rizell, \emph{Legendrian ambient surgery and {L}egendrian contact homology}, J. Symplectic Geom. \textbf{14} (2016), no.~3, 811--901.

\bibitem[DRG24]{rizell2024instabilitylegendrianknottednessnonregular}
Georgios Dimitroglou~Rizell and Roman Golovko, \emph{Instability of {L}egendrian knottedness, and non-regular {L}agrangian concordances of knots}, 2024, arXiv:2409.00290.

\bibitem[EGH10]{EGHSFT}
Yakov Eliashberg, Alexander Givental, and Helmut Hofer, \emph{Introduction to symplectic field theory}, pp.~560--673, Birkh{\"a}user Basel, Basel, 2010.

\bibitem[EGL18]{eliashberg2018flexiblelagrangians}
Yakov Eliashberg, Sheel Ganatra, and Oleg Lazarev, \emph{{Flexible Lagrangians}}, Int. Math. Res. Not. IMRN \textbf{2020} (2018), no.~8, 2408--2435.

\bibitem[EHK12]{ekholm2012exactcobordisms}
Tobias Ekholm, Ko~Honda, and Tamás Kálmán, \emph{Legendrian knots and exact {L}agrangian cobordisms}, J. Eur. Math. Soc. (JEMS) \textbf{18} (2012), 2627--2689.

\bibitem[EL24]{etnyre2024lagrangian}
John~B. Etnyre and Caitlin Leverson, \emph{Lagrangian realizations of ribbon cobordisms}, 2024, arXiv:2410.06305.

\bibitem[Eli87]{eliashberg1987wave}
Yakov Eliashberg, \emph{A theorem on the structure of wave fronts and its application in symplectic topology}, Funct. Anal. Its. Appl. \textbf{21} (1987), 227--232.

\bibitem[Eli95]{eliashberg1995pushoff}
\bysame, \emph{Topology of 2-knots in $\mathbb{R}^4$ and symplectic geometry}, pp.~335--353, Birkh{\"a}user Basel, Basel, 1995.

\bibitem[Eli18]{eliashberg2018weinsteinrevisited}
Yakov Eliashberg, \emph{Weinstein manifolds revisited}, Proceedings of Symposia in Pure Mathematics, vol.~99, pp.~59--82, American Mathematical Soc., 2018.

\bibitem[EP96]{eliashberg1996unknotclassification}
Yakov Eliashberg and Leonid Polterovich, \emph{Local {L}agrangian 2-knots are trivial}, Ann. of Math. \textbf{144} (1996), no.~1, 61--76.

\bibitem[FGMW10]{freedman2010manandmachine}
Michael Freedman, Robert Gompf, Scott Morrison, and Kevin Walker, \emph{Man and machine thinking about the smooth 4-dimensional {P}oincar\'e conjecture}, Quantum Topol. \textbf{1} (2010), 171--208.

\bibitem[FI04]{fuchs2004invariants}
Dmitry Fuchs and Tigran Ishkhanov, \emph{Invariants of {L}egendrian knots and decompositions of front diagrams}, Mosc. Math. J. \textbf{4} (2004), no.~3, 707--717.

\bibitem[Fox62]{fox1962knot}
R.~H. Fox, \emph{Topology of 3-manifolds and related topics}, ch.~A quick trip through knot theory, pp.~120--167, Prentice-Hall, Englewood Cliffs, N.J., 1962.

\bibitem[Fuc03]{fuchs2003rulings}
Dmitry Fuchs, \emph{Chekanov–{E}liashberg invariant of {L}egendrian knots: existence of augmentations}, J. Geom. Phys. \textbf{47} (2003), no.~1, 43--65.

\bibitem[Gab87]{gabai1987propertyR}
David Gabai, \emph{{Foliations and the topology of 3-manifolds. II}}, J. Differential Geom. \textbf{26} (1987), no.~3, 461--478.

\bibitem[Gei08]{geiges2008introduction}
Hansj{\"o}rg Geiges, \emph{An {I}ntroduction to {C}ontact {T}opology}, vol. 109, Cambridge University Press, 2008.

\bibitem[Gir91]{giroux1991convexite}
Emmanuel Giroux, \emph{Convexit{\'e} en topologie de contact}, Comment. {M}ath. {H}elv. \textbf{66} (1991), no.~1, 637--677.

\bibitem[GJ19]{golla2019functoriality}
Marco Golla and Andr{\'a}s Juh{\'a}sz, \emph{{Functoriality of the $\mathrm{EH}$ class and the LOSS invariant under Lagrangian concordances}}, Algebr. Geom. Topol \textbf{19} (2019), no.~7, 3683--3699.

\bibitem[Gom91]{gompf1991killingtheakbulut}
Robert~E. Gompf, \emph{{Killing the Akbulut-Kirby $4$-sphere, with relevance to the Andrews-Curtis and Schoenflies problems}}, Topology \textbf{30} (1991), no.~1, 97--115.

\bibitem[Gom98]{gompf1998handlebody}
Robert~E. Gompf, \emph{Handlebody construction of {S}tein surfaces}, Ann. of Math. (1998), 619--693.

\bibitem[Gom10]{gompf2010morecappell}
Robert~E. Gompf, \emph{{More Cappell–Shaneson spheres are standard}}, Algebr. Geom. Topol. \textbf{10} (2010), no.~3, 1665--1681.

\bibitem[Gor81]{gordon1981ribbon}
C.McA. Gordon, \emph{Ribbon concordance of knots in the 3-sphere.}, Math. Ann. \textbf{257} (1981), 157--170.

\bibitem[GST10]{gompf2010property2R}
Robert~E. Gompf, Martin Scharlemann, and Abigail Thompson, \emph{{Fibered knots and potential counterexamples to the Property 2R and Slice-Ribbon Conjectures}}, Geom. Topol. \textbf{14} (2010), no.~4, 2305--2347.

\bibitem[GSY22]{guadagni2022satellites}
Roberta Guadagni, Joshua~M. Sabloff, and Matthew Yacavone, \emph{Legendrian satellites and decomposable cobordisms}, J. Knot Theory Ramifications \textbf{31} (2022), no.~13, 2250071.

\bibitem[Hat12]{hatcher2012diffeogroups}
Allen Hatcher, \emph{A 50-year view of diffeomorphism groups}, 2012, available at \url{https://pi.math.cornell.edu/~hatcher/Papers/Diff(M)2012.pdf}.

\bibitem[Hay19]{hayden2019crosssections}
Kyle Hayden, \emph{Cross-sections of unknotted ribbon disks and algebraic curves}, Compos. Math. \textbf{155} (2019), no.~2, 413--423.

\bibitem[HH18]{HH18}
Ko~Honda and Yang Huang, \emph{Bypass attachments in higher-dimensional contact topology}, 2018, arXiv:1803.09142.

\bibitem[HH19]{honda2019convex}
\bysame, \emph{Convex hypersurface theory in contact topology}, 2019, arXiv:1907.06025.

\bibitem[HKP21]{hom2021ribbon}
Jennifer Hom, Sungkyung Kang, and Junghwan Park, \emph{Ribbon knots, cabling, and handle decompositions}, Math. Res. Lett. \textbf{28} (2021), no.~5, 1441--1457.

\bibitem[Hon00]{honda2000classification}
Ko~Honda, \emph{On the classification of tight contact structures {I}}, Geom. {T}opol. \textbf{4} (2000), no.~1, 309--368.

\bibitem[HS15]{hayden2015positivefillable}
Kyle Hayden and Joshua~M. Sablof, \emph{Positive knots and {L}agrangian fillability}, Proc. Amer. Math. Soc. \textbf{143} (2015), no.~4, 1813--1821.

\bibitem[Kan98]{kanda1998legendrian}
Yutaka Kanda, \emph{{On the Thurston-Bennequin invariant of Legendrian knots and non exactness of Bennequin's inequality}}, Invent. Math. \textbf{133} (1998), 227--242.

\bibitem[Lev14]{leverson2014augmentations}
Caitlin Leverson, \emph{Augmentations and rulings of {L}egendrian links}, J. Symplectic Geom. \textbf{14} (2014), 1089--1143.

\bibitem[Lev17]{leverson2017augmentationsconnectsum}
\bysame, \emph{Augmentations and rulings of {L}egendrian links in $\#^k({S}^1\times {S}^2)$}, Pacific J. Math. \textbf{288} (2017), no.~2, 381--423.

\bibitem[Lin16]{lin2016caps}
Francesco Lin, \emph{Exact {L}agrangian caps of {L}egendrian knots}, J. Symplectic Geom. \textbf{14} (2016), no.~1, 269--295.

\bibitem[LM15]{larson2015ribbondisks}
Kyle Larson and Jeffrey Meier, \emph{Fibered ribbon disks}, J. Knot Theory Ramifications \textbf{24} (2015), no.~14, 1550066.

\bibitem[MT24]{mark2024fillable}
Thomas~E. Mark and Bülent Tosun, \emph{Fillable contact structures from positive surgery}, Trans. Amer. Math. Soc. Ser. B \textbf{11} (2024), 1098--1137.

\bibitem[MZ21]{miller2021homotopyribbon}
Maggie Miller and Ian Zemke, \emph{Knot {F}loer homology and strongly homotopy-ribbon concordances}, Math. Res. Lett. \textbf{28} (2021), no.~3, 849--861.

\bibitem[MZ23]{miller2023handleribbon}
Maggie Miller and Alexander Zupan, \emph{Equivalent characterizations of handle-ribbon knots}, Comm. Anal. Geom. \textbf{31} (2023), no.~9, 2157--2193.

\bibitem[NR13]{ng2013satsrulings}
Lenhard Ng and Daniel Rutherford, \emph{Satellites of {L}egendrian knots and representations of the {C}hekanov–{E}liashberg algebra}, Algebr. {G}eom. {T}opol. \textbf{13} (2013), 3047--3097.

\bibitem[Pan16]{pan2016fillings}
Yu~Pan, \emph{Exact {L}agrangian fillings of {L}egendrian $(2,n)$ torus links}, Pacific J. Math. \textbf{289} (2016), 417–441.

\bibitem[Rud83]{rudolph1983seifertribbons}
Lee Rudolph, \emph{Braided surfaces and {S}eifert ribbons for closed braids}, Comment. Math. Helvetici \textbf{58} (1983), 1--37.

\bibitem[Sab05]{sabloff2005augmentations}
Joshua~M. Sabloff, \emph{Augmentations and rulings of {L}egendrian knots}, Int. {M}ath. {R}es. {N}ot. IMRN \textbf{2005} (2005), no.~19, 1157--1180.

\bibitem[SVVW21]{sabloff2021upper}
Joshua~M. Sabloff, David~Shea Vela-Vick, and C.~M.~Michael Wong, \emph{Upper bounds for the {L}agrangian cobordism relation on {L}egendrian links}, 2021, arXiv:2105.02390.

\end{thebibliography}
\bibliographystyle{amsalpha}

\end{document}